\documentclass[11pt]{article}

\usepackage[margin=1.08in]{geometry}
\usepackage{amsmath,amssymb,amsthm,mathtools, xcolor}
\usepackage{microtype}
\usepackage{enumitem}
\usepackage{comment}

\usepackage[hidelinks]{hyperref}
\hypersetup{
  pdftitle={Log-periodic asymptotics for tensor powers of the natural SL2-module in odd characteristic},
  pdfauthor={Aranya Lahiri and Nai-Heng Sheu}
}
\usepackage[backend=biber,style=alphabetic,maxbibnames=99]{biblatex}
\numberwithin{equation}{section}

\newtheorem{theorem}[equation]{Theorem}
\newtheorem{proposition}[equation]{Proposition}
\newtheorem{lemma}[equation]{Lemma}
\newtheorem{corollary}[equation]{Corollary}
\theoremstyle{definition}

\newtheorem{remark}[equation]{Remark}

\newcommand{\R}{\mathbb{R}}
\newcommand{\C}{\mathbb{C}}
\newcommand{\Z}{\mathbb{Z}}
\newcommand{\N}{\mathbb{N}}
\newcommand{\dd}{\,\mathrm{d}}

\newcommand{\SL}{\mathrm{SL}}
\newcommand{\Arg}{\operatorname{Arg}}

\newcommand{\arcosh}{\operatorname{arcosh}}

\newcommand{\commuNHH}{\textcolor{brown}} 

\newcommand{\Scal}{\mathcal S}
\newcommand{\Log}{\text{Log }}

\title{Asymptotics for number of indecomposable components of tensor powers of the natural $\mathrm{SL}_2$-module in odd characteristic}
\author{
  Aranya Lahiri\\
  Department of Mathematics, Indiana University
  \and
  Nai-Heng Sheu\\
  Department of Mathematics, Indiana University
}
\date{}

\begin{document}
\maketitle
\begin{abstract}

Let $K$ be an algebraically closed field of odd characteristic $p$, let
$G=\SL_2(K)$, and let $V$ be the natural representation of $G$.  Let $b_k$
denote the number of $G$-indecomposable factors of $V^{\otimes k}$, counted
with multiplicity, and let
$\delta_p=1-\log_{p^2}\!\bigl(\tfrac{p+1}{2}\bigr)$.  Then there exists a
smooth, strictly positive, multiplicatively $p^2$-periodic function
$\omega(t)$ such that $b_k$ is asymptotic to $\omega(k)k^{-\delta_p}2^k$.  We
also show that $t^{-\delta}\omega(t)$ arises as the limiting density of renormalized convolutions of rescaled copies of a positive weight-$3/2$
theta function, obtained from the boundary heat flux of the Dirichlet heat
kernel on $(0,p)$.
\end{abstract}

\tableofcontents

\section{Introduction}

Let $K$ be an algebraically closed field and let $G=\SL_2(K)$.  For each
$n\geq0$ there is, up to isomorphism, a unique indecomposable tilting
$G$-module $T(n)$ of highest weight $n$; in particular, the natural
two-dimensional representation is $V=T(1)$.  Since tensor products of
tilting modules are again tilting, for every $k\geq0$ there are uniquely
determined multiplicities $a_{n,k}\geq0$ such that

$$
V^{\otimes k}\cong\bigoplus_{n\geq0}T(n)^{\oplus a_{n,k}}.
$$

We study the total number of indecomposable summands, counted
with multiplicity,

$$
b_k:=\sum_{n\geq0}a_{n,k},
$$
and in particular in the asymptotic behavior of $b_k$ as $k\to\infty$.

In characteristic zero the category of
finite-dimensional rational $G$-modules is semisimple, so the indecomposable
tilting modules are precisely the irreducible modules.  The Clebsch--Gordan
rule

$$
T(n)\otimes V\cong T(n+1)\oplus T(n-1),\qquad n\geq1,
$$
together with $T(0)\otimes V\cong T(1)$, identifies $a_{n,k}$ with the number of $\pm1$ walks of length $k$ from
$0$ to $n$ that remain in $\Z_{\geq0}$, the constraint at the origin
coming from $T(0)\otimes V\cong T(1)$.  Hence $b_k$ counts all such walks
of length $k$, and
\[
b_k=\binom{k}{\lfloor k/2\rfloor},
\qquad\text{so}\qquad
2^{-k}k^{1/2}b_k\longrightarrow\sqrt{\tfrac2\pi}.
\]

In characteristic $p$ semisimplicity fails, and the formula of the decomposition of
$T(n)\otimes V$ is determined by the base-$p$
expansion of $n$.  The effect on $b_k$ is twofold, and it is the second
part that concerns us.  First, the power of $k$ changes.  Second, and less
obviously, the normalized sequence no longer converges: it remains
bounded between two positive constants, but oscillates indefinitely, and
the oscillation does not decay.  Determining the power of $k$ is therefore
only half of the problem; the other half is to investigate a function that
the oscillation approaches.

\subsection*{Previous results}

Erdmann (\cite{Erdmann95}) translated a classical question of finding certain decomposition numbers of symmetric groups to a question about finding explicity formula for $a_{n,k}$ via Schur-Weyl duality. Then she considered the generating functions,

$$
X_n(t):=\sum_{k\geq0}a_{n,k}t^k.
$$
and showed that the $X_n(t)$ are rational functions expressable explicitly via
 Chebyshev polynomials, and obtained the corresponding explicit formulae for $a_{n,k}$.
Thus the exact multiplicities underlying the present counting problem were
already accessible well before their large-$k$ behavior became the object of
study.

The systematic asymptotic theory began with Coulembier, Ostrik, and
Tubbenhauer \cite{COT24}.  In a much broader tensor-categorical setting they
showed that the exponential growth rate of the number of indecomposable
summands is determined by the dimension of the object.  For the natural
$\SL_2$-module this gives
$$
\lim_{k\to\infty}b_k^{1/k}=2.
$$For the the subexponential
factor, Coulembier, Etingof, Ostrik, and Tubbenhauer subsequently determined its
power-law order in every positive characteristic \cite{CEOT25}.  They showed, if$$
\delta_p:=1-\log_{p^2}\!\left(\frac{p+1}{2}\right),
$$then there are constants $c_p,C_p>0$ such that
\begin{equation}\label{eq:intro-CEOT-bounds}
c_p \,2^k k^{-\delta_p}
\leq b_k\leq
C_p\,2^k k^{-\delta_p}.
\end{equation}
 One checks that $\delta_p>1/2$ for every $p$, and $\delta_p$ decreases to $\frac12$ as $p\to\infty$.  Thus positive characteristic produces fewer
indecomposable summands than characteristic zero, reflecting the larger
size of the indecomposable tilting modules, and the characteristic-zero
exponent is recovered in the limit.  In characteristic $2$ the same
authors showed in addition that the normalized sequence is controlled by
a continuous multiplicatively $4$-periodic function, so that the
two-sided bounds \eqref{eq:intro-CEOT-bounds} conceal a finer phenomenon.

Larsen made this phenomenon explicit in characteristic $2$
\cite{Larsen25}.  He showed that there is a positive smooth multiplicatively $4$-periodic
function $\omega_2$ such that $$
b_{2k}=b_{2k+1}\sim 4^k k^{-\delta_2}\omega_2(k).
$$His method was to consider the fusion graph of $V^{\otimes 2}$ and interpreted $a'_{n,k}:=a_{2n,2k}$ as the number of length-$k$ paths in this graph from $0$ to $n$. He then established the multiplicative property of $X_n$ with respect to the base-$2$ expansion of $n$. More precisely, if $n=\sum_{i=1}^m 2^{s_i},$ this multiplicative property expresses $a'_{n,k}$ as a finite convolution
of the sequences $a'_{2^{s_i},k}$ and it suggests that $\omega_2$ should arise as a limit of suitably rescaled convolutions. 
A particularly striking feature of his argument is that  $\omega_2(x)$ is
described by an infinite convolution of copies of a weight $\frac{3}{2}$ theta function that he can predict already by explicitly writing down the coefficients of the generating functions $X_{2^s}(t)$ using Chebyshev polynomials.

For odd characteristic the second author established two-sided bounds of
the form \eqref{eq:intro-CEOT-bounds} for an arbitrary tilting module
$T$, with the factor $2$ replaced by $\dim T$ \cite{Sheu25}.  That
argument builds on the framework of \cite{Larsen25} together with the
bounds \eqref{eq:intro-CEOT-bounds} of \cite{CEOT25}.  It also provides
explicit rational formulas for the generating functions $X_n(t)$ and
shows that, after a convenient shift and normalization, the resulting
family is multiplicative with respect to the base-$p$ expansion of $n$.

Our purpose is to close the gap in odd characteristic and go  from a two-sided estimate
to an asymptotic like Larsen does in characteristic 2.

\begin{theorem}[Main theorem]\label{thm:main}
Let $p$ be an odd prime and let
$\delta_p=1-\log_{p^2}\!\bigl(\tfrac{p+1}{2}\bigr)$ be the exponent of
\eqref{eq:intro-CEOT-bounds}.  There is a smooth function
$\omega:(0,\infty)\to(0,\infty)$ with $\omega(p^2t)=\omega(t)$ for all
$t>0$, such that
\[
b_k\sim 2^kk^{-\delta_p}\omega(k).
\]
\end{theorem}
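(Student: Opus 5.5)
The plan follows Larsen's characteristic-$2$ strategy, adapted to the base-$p$ multiplicative structure of \cite{Sheu25}. Write $n+1=\sum_{i}c_ip^{s_i}$ in base $p$; after the shift and normalization of \cite{Sheu25}, the generating function $X_n(t)$ factors as a product of one factor per nonzero digit. Equivalently, the (parity-normalized) multiplicities $2^{-k}a_{n,k}$ are a finite convolution in $k$ of elementary sequences attached to the pairs $(c_i,s_i)$. Summing over $n$ turns $b_k$ into a sum over all digit strings, which has the shape of a partial "infinite convolution product" over the levels $s=0,1,2,\dots$.

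\textbf{Step 1: single-level asymptotics.} First I will derive asymptotics for a single level. For the elementary sequence at level $s$, the Chebyshev/rational formula for $X_{p^s}$ has poles at $t=1/(2\cos(\pi j/p^{s}))$ or similar. The dominant contributions, after the rescaling $k=p^{2s}x$, converge to a function $\theta(x)$. This $\theta$ is a series $\sum_j j^2 e^{-\pi^2 j^2 x/(2p^2)}$ up to constants, i.e.\ the boundary heat flux of the Dirichlet heat kernel on $(0,p)$, which is a weight-$3/2$ theta function. The convergence must hold with uniform error bounds, e.g.\ $|p^{2s}\cdot(\text{level-}s\text{ term at }k)-\theta_c(k/p^{2s})|\le C p^{-2s}e^{-\epsilon k/p^{2s}}$. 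Here the $t\to$ Gaussian regime and the $k\ll p^{2s}$ regime are controlled separately using positivity and the explicit formulas.

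\textbf{Step 2: renormalized convolution limit.} Next I combine the levels. The digit choices $c\in\{0,\dots,p-1\}$ at each level contribute a total mass per level whose ratio is $(p+1)/2$. This is precisely what produces the exponent through $p^{2\delta_p}=p^2\cdot 2/(p+1)$. I define $F_m(x)$ as the renormalized ($\times((p+1)/2)^{-m}$ and rescaled by $p^{2m}$) convolution of the levels $0,\dots,m$. I will show that $F_m$ converges uniformly on compacts, with derivatives, to a smooth positive $F$ satisfying $F(p^2x)=p^{-2\delta_p}F(x)$. Convergence should follow from a contraction estimate: the added level-$m$ factor is a probability-like kernel at scale $p^{2m}$ whose effect on functions at scale $\ll p^{2m}$ differs from identity by $O(p^{-2\cdot(\text{gap})})$, giving a geometrically summable Cauchy sequence. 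Smoothness comes from the smoothing of the theta-kernel, and positivity from positivity of all terms. Then $\omega(t):=t^{\delta_p}F(t)$ is multiplicatively $p^2$-periodic.

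\textbf{Step 3: comparison with $b_k$.} Finally I write $2^{-k}b_k$ as the convolution over levels up to $m\approx\tfrac12\log_p k+O(1)$. Higher levels contribute negligibly, since a level-$s$ factor with $p^{2s}\gg k$ is exponentially small in $p^{2s}/k$ by the Dirichlet-kernel decay. Levels $s\le s_0$ are discretized to continuous densities with relative error $O(p^{-2s_0})$. Sending $k\to\infty$ and then $s_0\to\infty$ gives $2^{-k}b_k/(k^{-\delta_p}\omega(k))\to1$. The two-sided bounds \eqref{eq:intro-CEOT-bounds} serve to dominate tails uniformly.

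\textbf{Main obstacle.} I expect the main difficulty to be the uniform error control in Steps 1 and 3. The discrete sequences must be compared with their continuum theta limits uniformly across all levels simultaneously, including small $s$ where no rescaling limit applies, so that the errors sum over $\sim\log k$ convolution factors. My first attempt will be to work with Laplace/generating-function variables: show that $\prod_s$ of the level factors evaluated at $t=\tfrac12e^{-u/k}$ converges uniformly, then invert via a Tauberian argument. That inversion would use monotonicity (the $b_k/2^k$ are essentially monotone across parity) together with the smoothness of the limit.
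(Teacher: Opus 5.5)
Your outline takes a genuinely different route from the paper's: a Larsen-style real-variable comparison of discrete convolutions with their continuum limits. As written, however, it has a concrete gap. It never accounts for parity, i.e.\ for the second boundary singularity of $\mathcal B$ at $z=-1$.

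Since $a_{n,k}=0$ unless $n\equiv k\pmod 2$, each digit sequence $[t^k]W_{ap^s}(t)$ is supported on $k\equiv a\pmod 2$. Equivalently, the poles $t=1/(2\cos(\pi n/p^{s+1}))$ of $W_{ap^s}$ accumulate at $t=-\tfrac12$ as well as at $t=\tfrac12$. Summing over digits does not cure this. The level-$s$ sequence has total mass $S_s(\tfrac12)=\Lambda$ but alternating sum $S_s(-\tfrac12)=M_p(0)=\Lambda/p$. So, after removing the $\delta_0$ term, its even- and odd-indexed parts carry masses in the ratio $(p-1):(p+1)$. For $k\asymp p^{2s}$ it behaves like $p^{-2s}\bigl(\chi(k/p^{2s})+(-1)^k\tilde\chi(k/p^{2s})\bigr)$, with $\chi(t)=2\varphi_p(2t)$ and $\int\tilde\chi=-(p-1)/(2p)$. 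Hence no bound of the kind you posit in Step~1 can hold. The Step~3 replacement of levels by continuous densities with relative error $O(p^{-2s_0})$ fails for the same reason.

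In characteristic $2$ Larsen could sidestep this by passing to $V^{\otimes2}$, since there the parity of $n$ is its last binary digit. In odd characteristic the parity of $n$ is that of its base-$p$ digit sum, so it cannot be normalized away level by level. You must carry the $(-1)^k$ component through the whole convolution and prove it is of lower order. It is: per level it carries $\Lambda/p$ instead of $\Lambda$, which gives $O(2^kk^{\alpha-3/2})$ with $\alpha=1-\delta_p$. This is exactly the paper's analysis at $z=-1$ (Lemma~\ref{lem:Jp-bound} and Proposition~\ref{prop:minus-one-bound}). Also, the one-level kernel is not $\sum_j j^2e^{-\cdots}$; it is the weight-$3/2$ series \eqref{eq:varphi-cot} with coefficients $n\cot(\pi n/2p)$.

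Beyond this, the decisive estimate --- a uniform comparison of $\asymp\log k$ discrete factors with their continuum limits --- is left open. Your fallback Tauberian inversion rests on a monotonicity you have not established. In fact $2^{-k}b_k$ is not monotone: for $p=3$, $b_3=2$ and $b_4=5$, because $T(3)\otimes V\cong T(4)\oplus T(2)^{\oplus2}$. You would therefore need at least $b_{k+2}\le4b_k$, together with parity-separated Laplace asymptotics, which again require control at $z=-1$.

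The paper avoids real-variable error control altogether. It continues $\mathcal G=\prod_sA_s$ to the slit plane through the Joukowski coordinate $q$. It then obtains the log-periodic expansion of $F_p$ uniformly in complex sectors $|\arg x|\le\pi-\varepsilon$ from the Mellin transform of the harmonic sum $\log H_p$. Coefficients are extracted by two-point singularity analysis. The decay of $d_m/\Gamma(\beta_m)$ that makes $\omega$ smooth comes from the width of the strip of analyticity of $\Omega$. The convolution picture is used only for positivity.

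There are two smaller slips in Step~2. First, the limit of levels $0,\dots,m$ rescaled so the top level sits at scale $1$ is not self-similar: its Laplace transform satisfies $\widehat F(p^2y)=\Lambda^{-1}\bigl(1+f_p(p^2y)\bigr)\widehat F(y)$. You need the two-sided product over $[-r,r]$, as in Section~\ref{sec:convolution}. Second, positivity of the approximants gives only $\omega\ge0$, and strict positivity needs an extra argument. The paper splits off one level and uses the Hopf lemma. More cheaply, once $2^{-k}b_k=k^{-\delta_p}\omega(k)+o(k^{-\delta_p})$ is known, the lower bound in \eqref{eq:intro-CEOT-bounds} together with density of $\log_{p^2}k$ modulo $1$ gives $\omega\ge c_p$.
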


We in fact prove the sharper statement $b_k=2^kk^{-\delta_p}\omega(k)\bigl(1+O(k^{-\frac{1}{2}})\bigr)$
in Proposition~\ref{prop:additive-asymptotic}.
The formula for the exponent places the result naturally between the two
cases discussed above.  At $p=2$ it gives Larsen's exponent $1-\log_4\!\left(\frac32\right)$, while
$\delta_p\longrightarrow\frac12$ as $p\to\infty,$ recovering the characteristic-zero power $k^{-1/2}$.

The main assertion of the theorem is the existence of $\omega$, and part of
what we prove is that $\omega$ admits two independent descriptions.  The
first is an explicit Fourier series
\[
\omega(t)=\sum_{m\in\Z}\frac{d_m}{\Gamma(\beta_m)}\,t^{i\tau_m},
\qquad
\tau_m=\frac{2\pi m}{\log(p^2)},
\quad
\beta_m=(1-\delta)+i\tau_m,
\]
whose coefficients decay exponentially. The second is realization of it as an infinite convolution of a theta function with proper scaling.  Let $K_p(x,y,s)$ be the Dirichlet heat kernel on the interval
$(0,p)$ and put
\[
\varphi_p(t):=\frac14\sum_{a=1}^{p-1}\partial_yK_p(a,0,t/4)
\qquad(t>0),
\]
the sum of the inward heat fluxes at the endpoint $0$ produced by unit
sources at the nonzero integral points of $(0,p)$. We show that $t^{-\delta}\omega(t)$ is the limiting density of
convolutions of rescaled copies of $\varphi_p$. Thus, Larsen's
theta-convolution phenomenon therefore persists in odd characteristic.

\subsection*{Method and Organization}

The results quoted above were obtained by different analytic means:
\cite{CEOT25} uses self-similar functional equations together with
real-variable and Tauberian estimates, while \cite{Larsen25} works from
explicit coefficient formulas and a limiting convolution construction.
We proceed instead through the complex singularities of a 
generating function, using the harmonic-sum formalism of Flajolet,
Gourdon and Dumas \cite{FlGoDu95} and the singularity analysis of
Flajolet and Sedgewick \cite{FlSe09}.

\vskip8pt
\emph{From tilting multiplicities to an infinite product
(Section~\ref{sec:product}).}  Since $b_k=\sum_na_{n,k}$, the counting
series for $b_k$ is obtained by summing the multiplicity generating
functions $X_n$ over all highest weights, and normalizing by
$\dim V^{\otimes k}=2^k$ amounts to evaluating at $t=z/2$:
\[
\mathcal B(z):=\sum_{n\geq0}X_n(z/2)=\sum_{k\geq0}2^{-k}b_kz^k.
\]
With this scaling the radius of convergence is $1$, so all subexponential
information about $b_k$ lies in the behavior of $\mathcal B$ on the unit
circle.  It is the shifted series
\[
\mathcal G(z):=1+\frac z2\mathcal B(z)
\]
that inherits multiplicativity: the base-$p$ factorization of
\cite{Sheu25} converts the sum over all highest weights into a product
over the base-$p$ digit positions.  In the variable
$u=u(z)=\arcosh(1/z)$, for $0<z<1$, this reads
\begin{equation}\label{eq:intro-product}
\mathcal G(z)=H_p\bigl(u(z)\bigr),
\qquad
H_p(u):=\prod_{s\geq0}\bigl(1+h_p(p^su)\bigr),
\qquad
h_p(y):=\sum_{j=1}^{p-1}\frac{\sinh(jy)}{\sinh(py)}.
\end{equation}
Each factor of \eqref{eq:intro-product} corresponds to one base-$p$ digit
position, the index $s$ recording which; the $p$ terms making up
$1+h_p$ correspond to the $p$ possible values of that digit.  The
hyperbolic variable is the natural one here because the Chebyshev
polynomials occurring in the formulas for $X_n$ linearize in it.

\vskip8pt

\emph{Mellin analysis (Section~\ref{sec:mellin}).}  In the local variable
$x=u^2/4$ at $z=1$ the product \eqref{eq:intro-product} becomes
self-similar under $x\mapsto p^2x$, and taking logarithms turns it into a
harmonic sum over a geometric progression of scales.  Its Mellin transform is the product of the transform of
$\log\bigl(1+h_p\bigr)$ with the factor $(1-p^{-s})^{-1}$ coming from the
sum over scales. The second factor has poles along an arithmetic progression on
the imaginary axis: the pole at the origin produces the power law, and
the remaining ones produce its Fourier corrections.  Near $z=1$ this
gives a singular expansion of $\mathcal B$ of the form
\[
(1-z)^{-\alpha}\times\Bigl(\text{a $1$-periodic function of }
\log_{p^2}\tfrac1{1-z}\Bigr),
\qquad
\alpha=\log_{p^2}\!\left(\frac{p+1}{2}\right)=1-\delta_p,
\]
the exponent arising because one step of the scaling multiplies the local
variable by $p^2$ while contributing a factor $\frac{p+1}{2}$ at the
origin.  The Fourier coefficients of the periodic factor are the residues
at the nonzero poles, and they decay exponentially; this decay is what
makes the coefficient extraction of the next step possible. 

\vskip8pt
\emph{Transfer (Section~\ref{sec:transfer}).}  We define a singular model $\mathcal{S}(z)=\sum_md_m(1-z)^{-\beta_m},$ whose
exponents $\beta_m=\alpha+i\tau_m$ run over the pole lattice and whose
coefficients $d_m$ are the Fourier coefficients above.  Each term
transfers to coefficients by the standard singularity analysis of
\cite[Chapter~VI]{FlSe09}, contributing
$k^{\alpha-1}k^{i\tau_m}/\Gamma(\beta_m)$, and summing these defines the
amplitude
\[
\omega(t)=\sum_{m\in\Z}\frac{d_m}{\Gamma(\beta_m)}\,t^{i\tau_m}.
\]
Two things have to be checked to make this rigorous: that $\mathcal B$
differs from the singular model by a term of strictly smaller order than $\mathcal S$ as $z\to 1$, and that the termwise transfer is valid.  We also need to show that the singulairty at $z=-1$ only contributes to the error term.

\vskip8pt
\emph{The modular picture (Sections~\ref{sec:theta} and
\ref{sec:convolution}).}  We need to reconcile this definition of $\omega$ with one coming from infinite convolutions of $\varphi_p(t)$ and show $\omega$ is positive to conclude the proof of our theorem.  Computing the Dirichlet
resolvent on $(0,p)$ identifies $1+h_p$ as the Laplace transform of the
positive measure $\delta_0+\varphi_p(t)\dd t$, whose total mass
$\frac{p+1}{2}$ is the same number that appears in the exponent.
Multiplication of the factors becomes convolution of rescaled
copies of $\varphi_p$, and after renormalization these convolutions
converge to a measure with smooth density $t^{-\delta_p}\omega(t)$.  Since
$\varphi_p$ is a boundary heat flux, the parabolic Hopf lemma makes it
strictly positive, and this is how we prove that $\omega$ is positive; Poisson
summation relates the spectral and image expansions of $\varphi_p$ and
exhibits the weight $3/2$ theta function structure.   

The method is not specific to $V$, adopting inputs from \cite{Sheu25} we should be able to give the asymptotics for the number of  indecomposable summands of $T^{\otimes k}$ for any tilting summand $T$. We intend to add this result in a future version.

\subsection{Glossary of terms}

\begin{itemize}[leftmargin=3.1em,style=nextline,itemsep=0.25em]
\item[$p,\ K,\ G,\ V$] an odd prime, an algebraically closed field of
characteristic $p$, the group $\SL_2(K)$, and the natural module $V=T(1)$.
\item[$T(n)$] the indecomposable tilting $G$-module of highest weight $n$.
\item[$a_{n,k},\ b_k$] the multiplicity of $T(n)$ in $V^{\otimes k}$, and
$b_k=\sum_{n\geq0}a_{n,k}$.
\item[$X_n(t)$] the multiplicity generating function
$\sum_{k\geq0}a_{n,k}t^k$.
\item[$Z_n,\ W_n$] the shifted and normalized versions of $X_n$:
$Z_n=X_{n-1}$ for $n\geq1$, $Z_0=1/t$, and $W_n=tZ_n$, so $W_0=1$.
\item[$\mathcal B(z)$] the normalized counting series
$\sum_{k\geq0}2^{-k}b_kz^k=\sum_{n\geq0}X_n(z/2)$.
\item[$\mathcal G(z)$] the shifted series
$\sum_{n\geq0}W_n(z/2)=1+\tfrac z2\mathcal B(z)$.
\item[$T_n,U_n,Q_n,R_m$] Chebyshev polynomials of the first and second
kinds and their rescalings $Q_n(x)=2T_n(x/2)$, $R_m(x)=U_m(x/2)$.
\item[$S_s,\ A_s$] the digit-position factors of $\mathcal G$:
$S_s(t)=\sum_{a=0}^{p-1}W_{ap^s}(t)$, and $A_s$ its expression in the
coordinate $q$.
\item[$\Lambda$] the mass ratio $\Lambda=(p+1)/2=1+h_p(0)$.
\item[$\alpha,\ \delta$] the exponents $\alpha=\log_{p^2}\Lambda$ and
$\delta=1-\alpha$.
\item[$h_p(y)$] the single-scale factor
$\sum_{j=1}^{p-1}\sinh(jy)/\sinh(py)$.
\item[$v(z),\ u(z),\ q(z)$] the hyperbolic variable
$v(z)=\arcosh(1/z)$ on $(0,1)$, its continuation $u(z)=-\log q(z)$ near
$z=1$, and the Joukowski coordinate $q(z)=z/(1+\sqrt{1-z^2})$.
\item[$\eta(z)$] the local variable $u(z)^2/4$ at $z=1$; it satisfies
$\eta(z)=(1-z)/2+O((1-z)^2)$.
\item[$\mu(z)$] the local variable $\arcosh(-1/z)$ at $z=-1$.
\item[$\Omega_z,\ \Delta_{\phi,\rho}$] the slit plane
$\C\setminus((-\infty,-1]\cup[1,\infty))$ and the doubly indented disk
used for transfer.
\item[$f_p,\ F_p$] $f_p(x)=h_p(2\sqrt x)$ and
$F_p(x)=\prod_{s\geq0}(1+f_p(p^{2s}x))$, so
$\mathcal G(z)=F_p(\eta(z))$ near $z=1$.
\item[$g_p,\ H_p,\ S_p$] $g_p(y)=\log(1+h_p(y))$,
$H_p(y)=\prod_{s\geq0}(1+h_p(p^sy))$, and the harmonic sum
$S_p(y)=\sum_{k\geq0}g_p(p^ky)=\log H_p(y)$.
\item[$f^*(s)$] the Mellin transform $\int_0^\infty f(x)x^{s-1}\dd x$.
\item[$a_0$] the constant $\log\Lambda=2\alpha\log p$.
\item[$\widetilde\Phi,\widetilde\Omega,\Omega,\Omega_B$] the $1$-periodic
functions produced by the Mellin poles, with
$\widetilde\Omega=e^{\widetilde\Phi}$ and $\Omega,\Omega_B$ its
normalizations.
\item[$\tau_m,\ \beta_m$] the frequencies $\tau_m=2\pi m/\log(p^2)$ and
the exponents $\beta_m=\alpha+i\tau_m$.
\item[$c_m,\ d_m$] the Fourier coefficients of $\Omega$ and of
$\Omega_B$, related by $d_m=2^{1+\beta_m}c_m$.
\item[$\mathcal S(z)$] the singular model $\sum_m d_m(1-z)^{-\beta_m}$ at
$z=1$.
\item[$\omega(t)$] the periodic amplitude
$\sum_m\Gamma(\beta_m)^{-1}d_mt^{i\tau_m}$, with $\omega(p^2t)=\omega(t)$.
\item[$M_p,\ J_p,\ \gamma$] the local factor at $z=-1$, the product
$J_p(v)=\prod_{s\geq0}M_p(p^sv)$, and $\gamma=1-2\alpha$.
\item[$K_p(x,y,s)$] the Dirichlet heat kernel on $(0,p)$.
\item[$\varphi_p(t)$] the boundary heat-flux kernel
$\tfrac14\sum_{a=1}^{p-1}\partial_yK_p(a,0,t/4)$, with
$\int_0^\infty\varphi_p=f_p(0)=(p-1)/2$.
\item[$\varphi_{p,s},\ \Psi_{p,I},\ \psi^{(0)}_{p,r},\ \psi_{p,r}$] the
rescaled kernels, their finite convolution sums, and the renormalized
densities.
\item[$\chi,\ \nu,\ \psi$] the rescaled kernel $\chi(t)=2\varphi_p(2t)$,
the cofactor measure in the positivity argument, and the limiting density
$\psi(t)=t^{-\delta}\omega(t)$.
\end{itemize}

\subsection{AI Usage}

ChatGPT (principally GPT-5.5) was used in the development of some of the analytic arguments in this paper. The authors checked and rewrote the arguments and are responsible for the final proofs and exposition.

Here is an account, to the best of our recollection, of how ChatGPT was used in developing the proof.

\begin{enumerate}
\item We first worked out, essentially modifying Larsen's characteristic $2$ methods and the generating function formulas available in odd characteristic, what the asymptotic behavior should be. Some parts of Larsen's analysis appeared to us rather ingenious, and we wanted a more systematic analytic framework for obtaining the asymptotic in odd characteristic.

\item This desire led us to explore harmonic-sum method of Flajolet--Gourdon--Dumas and the singularity-analysis methods of Flajolet--Sedgewick as a promising framework. After doing some preliminary calculations and being convinced that the framework does explain the log periodicty, the first author asked ChatGPT to carry out the explicit Mellin transform for the characteristic 2 case. After substantial back and forth, and lots of corrections of errors he was convinced that GPT can supply a proof of Proposition \ref{prop:F-periodic}. GPT suggested using the Joukowski coordinates, and also helped with many small details. For example, the first author prompted it to use Stirling's formula for the estimations, the proof of identities (2.15) and (2.16) of $h_p(y)$, etc.

\item For the singularity at $z=-1$ the first author knew that we wanted to show that it can be ignored. He prompted GPT to supply a proof and it supplied a very terse proof. While reading through it, we added several explanatory steps and also arranged it so that at least we can read it.

\item We already knew what $\varphi_p$ should be quite explicitly by following the same technique as Larsen in characteristic 2. We also observed that $\varphi_p$ should be related to $f_p$ via Laplace transform. But did not initially know the appropriate analytic realization or how best to formalize Larsen's machinery in our setting. The first author gave a prompt to ChatGPT asking how to get the similar results as Larsen in this new framework. GPT came up with heat-kernel interpretation in Section 5, in which $\varphi_p$ is realized as a sum of boundary heat fluxes for the Dirichlet heat kernel and the Laplace-transform identity is proved using the Dirichlet resolvent.

\item ChatGPT also helped make the limiting-convolution argument in Section 6 precise. In particular, it assisted with the formulation of the finite convolution measures, the passage to a limiting positive measure, the identification of its density through Laplace transforms, and the argument upgrading nonnegativity of the limiting density to strict positivity. 

\item During the preparation of the manuscript, ChatGPT was also used to check proofs, suggest corrections and reorganizations, make the glossary of notation and help with literature search. The authors independently checked the mathematical arguments and rewrote the material generated by GPT as they found the GPT generated proofs very hard to read. It is overall fair to say that GPT made a lot of mistakes at several points that needed correction, however, without GPT's assistance it would have taken the authors much longer to discover some of the trickier analytic arguments as used in the paper.
\end{enumerate}

\subsection{Acknowledgments} We would like to thank Michael Larsen for suggesting the problem and all the productive discussions at various points of the project.
\section{From tilting multiplicities to a discrete-scale product}\label{sec:product}

\subsection[Generating functions and the base-$p$ factorization]{Generating functions and the base-$p$ factorization}

For $n,k\geq0$, let $a_{n,k}$ be the multiplicity of $T(n)$ in
$V^{\otimes k}$.  Thus the total number of indecomposable summands of
$V^{\otimes k}$, counted with multiplicity, is
\begin{equation}\label{eq:def-bk-section2}
 b_k:=\sum_{n\geq0}a_{n,k}.
\end{equation}
For each $n\geq0$, define the generating function
\[
X_n(t):=\sum_{k\geq0}a_{n,k}t^k.
\]
Following \cite{Sheu25}, set
\[
Z_n(t):=X_{n-1}(t)\quad(n\geq1),
\qquad
Z_0(t):=\frac1t,
\qquad
W_n(t):=tZ_n(t), \qquad W_0=1.
\]
The formula of $W_{ap^s(t)}$ was given in terms of the Chebyshev polynomials in \cite{Erdmann95} and \cite{Sheu25}. Therefore, we recall the notion of the Chebyshev polynomials. 

Let $T_n$ and $U_n$ denote the Chebyshev
polynomials of the first and second kinds, characterized by
\[
T_n(\cos\theta)=\cos(n\theta),
\qquad
U_n(\cos\theta)=\frac{\sin((n+1)\theta)}{\sin\theta}.
\]
Define the rescaled polynomials $Q_n(x):=2T_n(x/2)$ and $R_m(x):=U_m(x/2).$
Then
\[
Q_n(2\cos\theta)=2\cos(n\theta),
\qquad
R_m(2\cos\theta)=\frac{\sin((m+1)\theta)}{\sin\theta}.
\]

\begin{proposition}\label{prop:structural-input}
For $s\geq0$, $0\leq a<p$, and $0\leq i<p^s$,
\begin{equation}\label{eq:W-multiplicative}
W_{ap^s+i}(t)=W_{ap^s}(t)W_i(t).
\end{equation}
Moreover,
\begin{equation}\label{eq:W-block-rational}
W_{ap^s}(t)=
\frac{R_{p-a-1}\bigl(Q_{p^s}(1/t)\bigr)}
     {R_{p-1}\bigl(Q_{p^s}(1/t)\bigr)}.
\end{equation}
\end{proposition}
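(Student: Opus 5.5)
We will derive both statements from the tensor-product rules for tilting modules of $\SL_2$ in characteristic $p$, via the translation $V^{\otimes k}\otimes V$ and the recursion satisfied by the $a_{n,k}$. This is essentially the content of \cite{Erdmann95} and \cite{Sheu25}, and we could simply cite it. The plan is as follows.

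\textbf{Step 1: a recursion for the $Z_n$.} Write $T(n)\otimes V\cong\bigoplus_m T(m)^{\oplus c_{n,m}}$. Then
\[
a_{m,k+1}=\sum_n c_{n,m}\,a_{n,k}.
\]
Passing to generating functions, this becomes a linear system
\[
X_m(t)=\delta_{m,0}+t\sum_n c_{n,m}X_n(t).
\]
After the shift $Z_n=X_{n-1}$ and $Z_0=1/t$, the inhomogeneous term is absorbed, and the system takes the homogeneous form
\[
\tfrac1t Z_{n}=\sum_m \tilde c_{n,m}Z_m,
\]
a "transfer" equation on the shifted index.

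\textbf{Step 2: the shape of $\tilde c$.} For $n+1=ap^s+i$ with $0\le i<p^s$, Donkin's tensor product formula gives $T(ap^s+i)\cong T(a-1+p)^{[s]}\otimes T(\ldots)$-type factorizations. These show that tensoring with $V$ acts only on the low part $i$, except when $i$ is at a boundary ($i=0$ or $i=p^s-1$), where it moves the digit $a$ by $\pm1$. Consequently, the solution of the homogeneous system splits as a product: the factor coming from the low digits is $W_i$, and the factor coming from the top digit is a solution of a three-term recursion in $a$.

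\textbf{Step 3: multiplicativity and the explicit formula.} Multiplicativity \eqref{eq:W-multiplicative} follows by induction on $s$: verify that $W_{ap^s}W_i$ satisfies the same recursion and the same initial data as $W_{ap^s+i}$. For \eqref{eq:W-block-rational}, first show by induction on $s$ that crossing a full block of length $p^s$ multiplies by the transfer quantity $Q_{p^s}(1/t)$. This reduces to the identity $Q_{p^{s+1}}=Q_p\circ Q_{p^s}$, the composition rule $T_{mn}=T_m\circ T_n$. The recursion in the top digit $a$ then reads
\[
y_{a+1}+y_{a-1}=Q_{p^s}(1/t)\,y_a,
\]
with the boundary condition at $a=p$ coming from $T(p^{s+1}-1)$ being the Steinberg-type module, which is simple and projective. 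Its solutions are Chebyshev $R$'s in $Q_{p^s}(1/t)$, and normalizing by $W_0=1$ yields the ratio $R_{p-a-1}/R_{p-1}$.

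\textbf{Main obstacle.} The main obstacle is Step 2: pinning down precisely the boundary behaviour of $T(n)\otimes V$ when $n+1$ is near a multiple of $p^s$, and checking that the resulting recursion indeed has product solutions. We would first try to verify this by comparing characters, using known characters of tilting modules. Failing that, we would fall back on citing \cite{Sheu25}, where both identities are proved.
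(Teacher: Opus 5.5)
The paper gives no argument of its own here. Its proof is the citation \cite[Proposition~3.3 and the proof of Proposition~3.10]{Sheu25}, which is exactly your fallback, so at that level you and the paper agree.

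The self-contained route you outline is not yet a proof, and the gap is where you suspect it: Step~2 is asserted, and Step~3 depends on it. The description ``$V$ acts on the low part, and at the boundary moves the top digit by $\pm1$'' leaves out exactly the data that produce $Q_{p^s}$. From characters and Donkin's formula one finds, for example,
\[
T(p)\otimes V\cong T(p+1)\oplus T(p-1)^{\oplus 2},
\qquad
T(3p-2)\otimes V\cong T(3p-1)\oplus T(3p-3)\oplus T(p-1).
\]
So the equation for the multiplicity of $T(p-1)$ is $W_p/t=W_{p-1}+2W_{p+1}+W_{3p-1}$. It has a coefficient $2$ and a term a whole block away. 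To turn it into your block recursion you must insert:
\begin{itemize}
\item multiplicativity, $W_{p+1}=W_pW_1$ and $W_{3p-1}=W_{2p}W_{p-1}$;
\item the $s=0$ values $W_1=R_{p-2}(x)/R_{p-1}(x)$ and $W_{p-1}=1/R_{p-1}(x)$, with $x=1/t$.
\end{itemize}
Only then does it become $Q_p(x)W_p=W_0+W_{2p}$, because $xR_{p-1}(x)-2R_{p-2}(x)=R_p(x)-R_{p-2}(x)=Q_p(x)$. With coefficient $1$ you would get $R_p(x)$ instead of $Q_p(x)$.

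At higher levels the equations at $v=ap^s$ pick up further long-range terms, some again with coefficient $2$. For $p=3$, for instance, $T(13)\otimes V\cong T(14)\oplus T(12)\oplus T(8)^{\oplus 2}$. It is the combination of these terms that yields $Q_p\circ Q_{p^s}=Q_{p^{s+1}}$, and the boundary condition at $a=p$ also has to be read off from the same rule rather than asserted. What is missing, then, is the exact decomposition of $T(n)\otimes V$ at every level, together with a check that the product ansatz satisfies every resulting equation.

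Two smaller points.
\begin{itemize}
\item In the indexing used here, Donkin's formula reads $T(ap^s+i-1)\cong T(p^s+i-1)\otimes T(a-1)^{[s]}$ for $1\le a<p$ and $0\le i<p^s$. The first factor is $T(p^s+i-1)$, not $T(i-1)$. So the formula does not by itself give $W_{ap^s+i}=W_{ap^s}W_i$, which is a statement about generating functions, not a module factorization.
\item ``Same recursion and same initial data'' should be phrased as uniqueness of the formal power series solution of $X_m=\delta_{m,0}+t\sum_n c_{n,m}X_n$: the coefficient of $t^{k+1}$ is determined by those of $t^k$. The equation indexed by $v$ involves indices $w>v$, so it is not an initial-value recursion in $v$.
\end{itemize}
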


\begin{proof}
    See \cite[Proposition~3.3 and the proof of
Proposition~3.10]{Sheu25}.  
\end{proof}
We normalize the counting sequence by the exponential growth of
$\dim(V^{\otimes k})=2^k$ and write
\[
\mathcal B(z):=\sum_{k\geq0}2^{-k}b_k z^k,
\qquad
\mathcal G(z):=\sum_{n\geq0}W_n(z/2).
\]

\begin{lemma}\label{lem:B-via-G}
The generating functions $\mathcal B$ and $\mathcal G$ satisfy
\begin{equation}\label{eq:B-via-G}
\mathcal G(z)=1+\frac z2\mathcal B(z).
\end{equation}
Equivalently, for $z\neq0$,
\begin{equation}\label{eq:B-from-G}
\mathcal B(z)=\frac{2}{z}\bigl(\mathcal G(z)-1\bigr).
\end{equation}
The right-hand side of \eqref{eq:B-from-G} extends analytically to $z=0$.
\end{lemma}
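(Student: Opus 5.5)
The identity is a formal computation with generating functions; the plan is to verify it coefficientwise as formal power series in $z$. By definition $W_0=1$ and, for $n\geq1$, $W_n(t)=tZ_n(t)=tX_{n-1}(t)$. Therefore
\[
\mathcal G(z)=\sum_{n\geq0}W_n(z/2)=1+\frac z2\sum_{n\geq1}X_{n-1}(z/2)=1+\frac z2\sum_{m\geq0}X_m(z/2).
\]
It then remains to identify $\sum_{m\geq0}X_m(z/2)$ with $\mathcal B(z)$. Writing $X_m(z/2)=\sum_k a_{m,k}2^{-k}z^k$ and interchanging the two sums gives $\sum_k 2^{-k}\bigl(\sum_m a_{m,k}\bigr)z^k=\sum_k 2^{-k}b_kz^k=\mathcal B(z)$, by \eqref{eq:def-bk-section2}.

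The only point needing justification is the interchange of summation, and with it the meaning of $\mathcal G$ as a series of functions. This is where I expect any care to be needed, mild as it is. I would handle it in two ways. Formally, for fixed $k$ we have $a_{m,k}=0$ once $m>k$, since the highest weights of $V^{\otimes k}$ are at most $k$. So each coefficient of $z^k$ is a finite sum, and the rearrangement is legitimate in $\C[[z]]$. Analytically, all terms are nonnegative for $0\le z<1$, and $\sum_m a_{m,k}=b_k\le 2^k$ because each summand has dimension at least $1$. Hence the double series $\sum_{m,k}a_{m,k}2^{-k}|z|^k$ is bounded by $\sum_k|z|^k<\infty$ for $|z|<1$. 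Fubini/Tonelli for series then gives absolute convergence and equality of both sides as analytic functions on the unit disk.

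Finally, \eqref{eq:B-from-G} follows by dividing by $z/2$ for $z\neq0$. Because $\mathcal G(z)-1=\frac z2\mathcal B(z)$, the function $\mathcal G-1$ is a power series vanishing at $0$. So $\frac2z(\mathcal G(z)-1)$ has a removable singularity at $z=0$, and its value there is $\mathcal B(0)=b_0=1$; this gives the analytic extension.
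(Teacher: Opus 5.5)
Your proof is correct and follows the paper's argument: split off $W_0=1$, use $W_n(t)=tX_{n-1}(t)$, interchange the sums to obtain $t\sum_k b_k t^k$, substitute $t=z/2$, and divide by $z$ using that $\mathcal G-1$ is divisible by $z$. Your explicit justification of the interchange is a useful addition that the paper leaves implicit in this lemma. You give it two ways: each coefficient is a finite sum because $a_{m,k}=0$ for $m>k$, and alternatively Tonelli applies using $b_k\le 2^k$. The paper does use the same highest-weight bound later, in Proposition~\ref{prop:digit-product}.
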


\begin{proof}
For $n\geq1$ we have $W_n(t)=tX_{n-1}(t)$, and hence
\[
\sum_{n\geq1}W_n(t)
=t\sum_{m\geq0}X_m(t)
=t\sum_{k\geq0}\left(\sum_{m\geq0}a_{m,k}\right)t^k
=t\sum_{k\geq0}b_kt^k.
\]
Substituting $t=z/2$ and adding the term $W_0=1$ gives
\eqref{eq:B-via-G}; hence, \eqref{eq:B-from-G} follows.  Since \eqref{eq:B-via-G} shows that
$\mathcal G(z)-1$ is divisible by $z$ as a power series, the quotient in
\eqref{eq:B-from-G} has a well-defined analytic value at $z=0$.
\end{proof}

For each scale $s\geq0$, define
\[
S_s(t):=\sum_{a=0}^{p-1}W_{ap^s}(t).
\]
The multiplicative identity \eqref{eq:W-multiplicative} now turns the
base-$p$ expansion of $n$ into a product over scales.

\begin{proposition}\label{prop:digit-product}
For every $r\geq0$,
\begin{equation}\label{eq:finite-digit-product}
\sum_{0\leq n<p^{r+1}}W_n(t)=\prod_{s=0}^{r}S_s(t).
\end{equation}
Consequently,
\begin{equation}\label{eq:G-formal-product}
\mathcal G(z)=\prod_{s\geq0}S_s(z/2)
\end{equation}
as an identity of formal power series.
\end{proposition}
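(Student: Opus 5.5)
We prove \eqref{eq:finite-digit-product} by induction on $r$, using the multiplicative identity \eqref{eq:W-multiplicative} of Proposition~\ref{prop:structural-input}. For $r=0$ the left side is $\sum_{a=0}^{p-1}W_a(t)$, which is $S_0(t)$ by definition. For the inductive step, each $n$ with $0\le n<p^{r+2}$ is written uniquely as $n=ap^{r+1}+i$ with $0\le a<p$ and $0\le i<p^{r+1}$; this is just splitting off the leading base-$p$ digit. Proposition~\ref{prop:structural-input} with $s=r+1$ gives $W_{ap^{r+1}+i}=W_{ap^{r+1}}W_i$. Summing over $a$ and $i$ then factors as
\[
\sum_{0\le n<p^{r+2}}W_n(t)=\Bigl(\sum_{a=0}^{p-1}W_{ap^{r+1}}(t)\Bigr)\Bigl(\sum_{0\le i<p^{r+1}}W_i(t)\Bigr)=S_{r+1}(t)\prod_{s=0}^{r}S_s(t),
\]
which completes the induction. (The term $a=0$ is consistent, since $W_0=1$.)

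For \eqref{eq:G-formal-product} we pass to the limit $r\to\infty$ in the $t$-adic (equivalently $z$-adic) topology on $\Z[[t]]$. Two order-of-vanishing facts are needed.
\begin{itemize}
\item[(i)] $W_n(t)=tX_{n-1}(t)$ has $t$-adic valuation at least $n$ for $n\ge1$. Indeed $a_{n-1,k}=0$ unless $k\ge n-1$, because $V^{\otimes k}$ has weights at most $k$.
\item[(ii)] Consequently $S_s(t)-1=\sum_{a=1}^{p-1}W_{ap^s}(t)$ has valuation at least $p^s$.
\end{itemize}
By (ii) the infinite product $\prod_{s\ge0}S_s$ converges formally, and its coefficient of $t^k$ stabilizes once $p^{r+1}>k$. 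By (i) the partial sums $\sum_{n<p^{r+1}}W_n$ converge to $\sum_{n\ge0}W_n$, with the coefficient of $t^k$ unchanged once $p^{r+1}>k+1$. Comparing coefficients of each $t^k$ in \eqref{eq:finite-digit-product} for $r$ large gives the identity in $\Z[[t]]$. Substituting $t=z/2$ is a continuous ring map, so it yields \eqref{eq:G-formal-product}.

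The only point needing care is (i), the valuation bound $a_{n,k}=0$ for $n>k$. It follows from the fact that $T(n)$ has highest weight $n$ while all weights of $V^{\otimes k}$ lie in $[-k,k]$. Alternatively, one can read it off from \eqref{eq:W-block-rational}: $Q_{p^s}(1/t)$ has a pole of order $p^s$ at $t=0$, so the ratio $R_{p-a-1}/R_{p-1}$ behaves like $Q_{p^s}(1/t)^{-a}$, which has valuation $ap^s$. Either argument suffices, and the rest is bookkeeping.
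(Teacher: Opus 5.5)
Your proposal is correct and follows essentially the same route as the paper. The finite identity comes from splitting $n$ into base-$p$ digits and applying the multiplicative identity; your induction on the leading digit is just an iterated form of the paper's direct digit expansion. The formal limit rests on the same fact $a_{n,k}=0$ for $n>k$, i.e.\ that $W_n$ has $t$-adic valuation at least $n$. The paper only uses this to stabilize the partial sums and then transfers that stabilization to the finite products through the finite identity, so your separate check (ii) on the product is harmless but not needed.
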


\begin{proof}
Let
\[
n=a_0+a_1p+\cdots+a_rp^r,
\qquad 0\leq a_s<p,
\]
be the base-$p$ expansion of $n$.  Repeated application of
\eqref{eq:W-multiplicative} gives
\[
W_n(t)=\prod_{s=0}^{r}W_{a_sp^s}(t).
\]
Summing independently over the digits $a_0,\ldots,a_r$ gives
\eqref{eq:finite-digit-product}.

It remains to explain why the limit of these finite products gives
\eqref{eq:G-formal-product} coefficientwise.  For $n\geq1$,
\[
W_n(t)=tX_{n-1}(t)
      =\sum_{k\geq0}a_{n-1,k}\,t^{k+1}.
\]
If $a_{n-1,k}\neq0$, then $T(n-1)$ occurs in $V^{\otimes k}$.  Since every
highest weight occurring in $V^{\otimes k}$ is at most $k$, this is impossible
when $n-1>k$.  Therefore the coefficient of $t^j$ in $W_n(t)$ is zero whenever
$n>j$.

Fix $j\geq0$.  It follows that the coefficient of $t^j$ in
$\sum_{n\geq0}W_n(t)$ receives contributions only from the finitely many
terms $W_0,\ldots,W_j$.  Hence, once $p^{r+1}>j$, the coefficient of $t^j$ in
\[
\sum_{0\leq n<p^{r+1}}W_n(t)
\]
has already reached its final value.  By \eqref{eq:finite-digit-product}, the
same is therefore true of the coefficient of $t^j$ in the corresponding
finite product.  Since this holds for every $j$, the finite products converge
coefficientwise to \eqref{eq:G-formal-product}.
\end{proof}

For the later analytic argument, we introduce the variable
\[
v(z):=\operatorname{arcosh}(1/z), \qquad 0<z<1,
\]
where $\operatorname{arcosh}$ denotes the positive real branch.
It is then convenient to write the Chebyshev identities in hyperbolic
form:
\begin{equation}\label{eq:R-hyperbolic}
Q_n(2\cosh y)=2\cosh(ny),
\qquad
R_m(2\cosh y)=\frac{\sinh((m+1)y)}{\sinh y}.
\end{equation}

Then we have
\begin{proposition}\label{prop:one-scale}
For $0<z<1$, $s\geq0$, and $0\leq a<p$,
\begin{equation}\label{eq:W-hyperbolic}
W_{ap^s}(z/2)=
\frac{\sinh((p-a)p^sv(z))}{\sinh(p^{s+1}v(z))}.
\end{equation}
Consequently,
\begin{equation}\label{eq:S-hyperbolic}
S_s(z/2)=1+h_p(p^sv(z)),
\qquad
h_p(y):=\sum_{j=1}^{p-1}\frac{\sinh(jy)}{\sinh(py)}.
\end{equation}
Moreover,
\begin{align}
 h_p(y)
 &=\frac{\sinh((p-1)y/2)}{2\sinh(y/2)\cosh(py/2)}
   =\frac{e^{-y}+\cdots+e^{-(p-1)y}}{1+e^{-py}},
   \label{eq:hp-forms}\\
1+h_p(y)
 &=\frac{1-e^{-(p+1)y}}{(1-e^{-y})(1+e^{-py})}.
   \label{eq:one-plus-hp}
\end{align}
\end{proposition}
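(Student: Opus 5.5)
The plan is to substitute $t=z/2$ into the block formula \eqref{eq:W-block-rational} of Proposition~\ref{prop:structural-input} and linearize with \eqref{eq:R-hyperbolic}. For $0<z<1$, put $v=v(z)=\arcosh(1/z)>0$. Then $1/t=2/z=2\cosh v$, so
\[
Q_{p^s}(1/t)=2\cosh(p^s v).
\]
Applying the second identity of \eqref{eq:R-hyperbolic} with $y=p^sv$ gives
\[
R_m\bigl(2\cosh(p^sv)\bigr)=\frac{\sinh((m+1)p^sv)}{\sinh(p^sv)}.
\]
Take $m=p-a-1$ in the numerator and $m=p-1$ in the denominator. The common factor $\sinh(p^sv)\neq0$ cancels, which yields \eqref{eq:W-hyperbolic}; the denominator $\sinh(p^{s+1}v)$ is nonzero since $v>0$. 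The case $a=0$ gives $1$, consistent with the convention $W_0=1$ when $s=0$.

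Next I sum \eqref{eq:W-hyperbolic} over $0\le a<p$ and reindex by $j=p-a$, which runs over $1,\dots,p$. With $y=p^sv$, the term $j=p$ equals $\sinh(py)/\sinh(py)=1$, and the remaining terms $1\le j\le p-1$ give exactly $h_p(y)$. This is \eqref{eq:S-hyperbolic}.

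For \eqref{eq:hp-forms} and \eqref{eq:one-plus-hp} I work with exponentials. Write
\[
\sum_{j=1}^{p-1}\sinh(jy)=\tfrac12\Bigl(\sum_{j=1}^{p-1}e^{jy}-\sum_{j=1}^{p-1}e^{-jy}\Bigr)
\]
and evaluate each geometric sum. Symmetrizing about the midpoint $j=p/2$ gives the product formula
\[
\sum_{j=1}^{p-1}\sinh(jy)=\frac{\sinh(py/2)\,\sinh((p-1)y/2)}{\sinh(y/2)}.
\]
Dividing by $\sinh(py)=2\sinh(py/2)\cosh(py/2)$ gives the first form in \eqref{eq:hp-forms}.

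For the second form, multiply numerator and denominator of $\sum_j\sinh(jy)/\sinh(py)$ by $2e^{-py}$. The denominator becomes $1-e^{-2py}$, and the numerator becomes
\[
\sum_{j=1}^{p-1}\bigl(e^{-(p-j)y}-e^{-(p+j)y}\bigr)=(1-e^{-py})\sum_{k=1}^{p-1}e^{-ky}.
\]
Cancelling $1-e^{-py}$ against $1-e^{-2py}=(1-e^{-py})(1+e^{-py})$ gives $\bigl(e^{-y}+\cdots+e^{-(p-1)y}\bigr)/(1+e^{-py})$.

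Finally, for \eqref{eq:one-plus-hp}, add $1$ using the second form:
\[
1+h_p(y)=\frac{1+e^{-py}+\sum_{k=1}^{p-1}e^{-ky}}{1+e^{-py}}=\frac{\sum_{k=0}^{p}e^{-ky}}{1+e^{-py}}.
\]
The numerator sums to $(1-e^{-(p+1)y})/(1-e^{-y})$, which is the claimed formula.

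All steps are routine. The only places needing care are the index bookkeeping (the shift $m+1=p-a$) and the symmetrization that produces the product formula for $\sum_j\sinh(jy)$. Both can be checked independently by cross-multiplying the two closed forms.
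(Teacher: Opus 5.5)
Your proof is correct and follows the paper's argument: substitute $1/t=2\cosh v(z)$ into \eqref{eq:W-block-rational}, linearize with \eqref{eq:R-hyperbolic}, reindex with $j=p-a$, and derive \eqref{eq:hp-forms} from the geometric-series product formula for $\sum_j\sinh(jy)$ together with $\sinh(py)=2\sinh(py/2)\cosh(py/2)$. Your explicit multiplication by $2e^{-py}$ for the exponential form, and the summation of $\sum_{k=0}^{p}e^{-ky}$ for \eqref{eq:one-plus-hp}, simply spell out steps the paper states as following directly.
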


\begin{proof}
With $t=z/2$ we have $1/t=2/z=2\cosh v(z)$.  Applying
\eqref{eq:R-hyperbolic} to \eqref{eq:W-block-rational} therefore gives
\eqref{eq:W-hyperbolic}.  Summing over $a$ and reindexing with $j=p-a$
gives \eqref{eq:S-hyperbolic}.

To prove equation \eqref{eq:hp-forms}, first, observe that $1-e^{-y}=2e^{-y/2}\sinh{(y/2)}.$ Then finite geometric-series formula gives
\[
\sum_{j=1}^{n-1}e^{\pm jy}
=
e^{\pm ny/2}
\frac{\sinh((n-1)y/2)}{\sinh(y/2)}.
\]
Then \eqref{eq:hp-forms} follows by applying the equation $\sinh(py)=2\sinh(py/2)\cosh(py/2)$ and the above formula. Equation \eqref{eq:one-plus-hp} then follows from \eqref{eq:hp-forms}.
\end{proof}

\begin{lemma}\label{lem:radius-one}
The radius of convergence of $\mathcal B$ is $1$, and $z=1$ is a boundary
singularity of $\mathcal B$.
\end{lemma}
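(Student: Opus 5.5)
The argument has three steps: first show that the radius of convergence $R$ is at most $1$, then that it is at least $1$, and finally use Pringsheim's theorem to place a singularity at $z=1$.

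\textbf{Step 1: $R\le 1$.} This uses only the trivial bounds on $b_k$. Every summand of $V^{\otimes k}$ has dimension at least $1$, so $b_k\le 2^k$ and $2^{-k}b_k\le 1$; this alone gives $R\geq 1$. For the reverse inequality, note that $2^{-k}b_k$ does not decay exponentially. Each tilting summand $T(n)$ with $n\le k$ has dimension at most $\dim V^{\otimes k}$, but a better bound is available: $\dim T(n)\le 2(n+1)\le 2(k+1)$. This holds because the character of $T(n)$ is a sum of Weyl characters $\chi(m)$ with $m\le n$, and these occur without repetition for $\SL_2$. Alternatively, we can simply quote the lower bound \eqref{eq:intro-CEOT-bounds} of \cite{CEOT25}, or the growth rate $\lim b_k^{1/k}=2$ of \cite{COT24}. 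Either way $b_k\ge 2^k/(2k+2)$, so $\limsup (2^{-k}b_k)^{1/k}=1$, and with the trivial bound we get exactly $R=1$. I would cite \cite{COT24} for this, since it is stated earlier and avoids any discussion of tilting characters.

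\textbf{Step 2: $R\ge 1$.} This was already settled in Step 1 by $b_k\le 2^k$: the coefficients $2^{-k}b_k$ lie in $[0,1]$, so the series converges on the open unit disk. Together with Step 1, $R=1$.

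\textbf{Step 3: the singularity at $z=1$.} Since all coefficients $2^{-k}b_k$ are nonnegative and the radius of convergence is $1$, Pringsheim's theorem says that the point $z=R=1$ on the positive real axis is a singularity of $\mathcal B$. For a more self-contained route, note that $\mathcal B(x)\to\infty$ as $x\uparrow 1$, which shows directly that $\mathcal B$ cannot be continued analytically to a neighbourhood of $1$. This divergence follows because $\sum 2^{-k}b_k$ diverges: $2^{-k}b_k\ge c\,k^{-\delta_p}$ with $\delta_p<1$ by \eqref{eq:intro-CEOT-bounds}. In fact even the cruder bound $2^{-k}b_k\ge 1/(2k+2)$ from Step 1 already gives divergence, with no need for \cite{CEOT25}, so monotone convergence yields $\mathcal B(x)\uparrow\infty$.

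The only step requiring any care is the lower bound $b_k\ge 2^k/\mathrm{poly}(k)$. If we prefer not to rely on a dimension bound for tilting modules, \cite{COT24} or \cite{CEOT25} supplies it directly. The remaining steps are formal.
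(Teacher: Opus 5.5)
Your main line is correct but differs from the paper's. It runs as follows: $b_k\le 2^k$ gives $R\ge 1$; the growth rate $\lim b_k^{1/k}=2$ of \cite{COT24} gives $\limsup(2^{-k}b_k)^{1/k}=1$, hence $R\le1$ by Cauchy--Hadamard; and the Vivanti--Pringsheim theorem, using nonnegative coefficients, then puts a singularity at $z=1$.

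The paper uses no external growth result. From the nonnegativity of the coefficients of the $W_n$ and the digit product of Proposition~\ref{prop:digit-product}, it gets $\mathcal G(x)\ge\prod_{s=0}^{r}S_s(x/2)\to\Lambda^{r+1}$ as $x\uparrow 1$, with $\Lambda=(p+1)/2>1$, so $\mathcal G(x)\to\infty$. This single estimate shows both that $z=1$ is singular and that $R\le 1$. In effect it reproves internally the part of \cite{COT24} you import. It also exhibits the constant $\Lambda$ that later yields the exponent $\alpha=\log_{p^2}\Lambda$. Your version is shorter, but it outsources the only nontrivial input to a deep tensor-categorical theorem.

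One auxiliary claim is false and should be removed. The bound $\dim T(n)\le 2(n+1)$ does not hold. By Donkin's tensor product theorem, $T(2p^j-2)\cong T(2p-2)\otimes T(2p^{j-1}-2)^{[1]}$, so $\dim T(2p^j-2)=(2p)^j$. For example, with $p=3$ the module $T(16)$ has Weyl factors $\Delta(16),\Delta(12),\Delta(4),\Delta(0)$ and dimension $36>34=2(16+1)$.

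Your justification (multiplicity-free Weyl filtrations with highest weights $m\le n$) gives only $\dim T(n)\le (n+1)(n+2)/2$, hence $2^{-k}b_k\ge 2/((k+1)(k+2))$. That still suffices for $R\le 1$, but this lower bound is summable. So your claim in Step 3 that the crude bound already forces $\sum_k 2^{-k}b_k=\infty$ ``with no need for \cite{CEOT25}'' fails. The divergence route genuinely needs $\delta_p<1$ from \eqref{eq:intro-CEOT-bounds}, or the paper's product estimate. Similarly, \cite{COT24} alone does not give $b_k\ge 2^k/(2k+2)$; it gives only $b_k\ge(2-\varepsilon)^k$ for large $k$, so your ``either way'' is inaccurate.

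None of this affects your main argument, since Pringsheim's theorem already completes Step 3 once $R=1$ is known.
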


\begin{proof}
Since $b_k$ is the number of indecomposable direct summands of the $2^k$-dimensional module
$V^{\otimes k}$, counted with multiplicity, we have $b_k\leq2^k$.  Hence, 
$0\le 2^{-k}b_k \leq1$, so the radius of convergence of $\mathcal B$ is at least
$1$.

To show that $z=1$ is a singularity of $\mathcal{B}(z)$, we first show that $\lim_{z \to 1}\mathcal{G}(z)= \infty.$

Observe that $W_n(z) \ge 0$ for all $z>0$ as the coefficients of $W_n(z)$ are nonnegative. Then for any $r \in \N$,
$$\mathcal{G}(z)= \sum_{n \ge 0} W_n(z/2) \ge \sum_{0 \le n < p^{r+1}} W_n(z/2)= \prod_{s=0}^rS_s(z/2), $$ where the last equality follows from Proposition~\ref{prop:digit-product}.

As $z\to 1$, we have $v(z)\to0$, and therefore, for each
fixed $s$,
\[
S_s(z/2)
=
1+h_p(p^sv(z))
\longrightarrow
1+h_p(0)
=
\frac{p+1}{2}.
\]
Hence, for any $r \in \N$,
$
\liminf_{z\to 1}\mathcal G(z)
\ge
(\frac{p+1}{2})^{r+1}.
$
As the inequality holds for every
$r$ and $p>2$,  the right-hand side tends to
infinity as $r\to\infty$. It follows that $z=1$ is a boundary singularity of $\mathcal{G}(z)$. By Lemma \ref{lem:B-via-G}, $z=1$ is also a boundary singularity of $\mathcal{B}(z)$, and the radius
of convergence of $\mathcal{B}(z)$ is $1$.
\end{proof}

\subsection{The Joukowski coordinate and analytic continuation}

The exponential form \eqref{eq:hp-forms} suggests replacing the 
variable $v=\arcosh (1/z)$, by
\[
q=e^{-v}.
\]
From $z=1/\cosh v$ we obtain
\[
z=\frac{2}{q^{-1}+q}=\frac{2q}{1+q^2}.
\]
Thus $q$ is naturally the inverse coordinate for the classical Joukowski map. 


Let
\[
\Omega_z:=\C\setminus\bigl(( -\infty,-1]\cup[1,\infty)\bigr)
\]
and choose the branch of $\sqrt{1-z^2}$ that is positive on $(-1,1)$.  Define
\begin{equation}\label{eq:def-q}
q(z):=\frac{z}{1+\sqrt{1-z^2}},
\qquad z\in\Omega_z.
\end{equation}

\begin{lemma}\label{lem:q-coordinate}
The function $q$ is analytic on $\Omega_z$, satisfies
\begin{equation}\label{eq:Joukowski-inverse}
z=\frac{2q(z)}{1+q(z)^2},
\end{equation}
and 
\begin{equation}\label{eq:q-unit-disc}
|q(z)|<1
\qquad (z\in\Omega_z).
\end{equation}
For $0<z<1$,
\begin{equation}\label{eq:q-exp-u}
q(z)=e^{-v(z)}.
\end{equation}
\end{lemma}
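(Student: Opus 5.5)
The proof splits into four pieces: analyticity, the Joukowski identity, the bound $|q(z)|<1$, and the identification $q=e^{-v}$ on $(0,1)$.

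\emph{Analyticity.} On the simply connected domain $\Omega_z$ the function $1-z^2$ is nonvanishing. It vanishes only at $z=\pm1$, which lie on the removed rays, so an analytic branch of $\sqrt{1-z^2}$ exists there. We fix the branch that is positive on $(-1,1)$. More concretely, $1-z^2$ takes values in $(-\infty,0]$ exactly when $z\in(-\infty,-1]\cup[1,\infty)$ or $z$ is purely imaginary. On the imaginary axis, however, $1-z^2\geq1>0$. Hence $1-z^2\in\C\setminus(-\infty,0]$ for all $z\in\Omega_z$, and we may take $\sqrt{1-z^2}$ to be the principal branch composed with $1-z^2$. This choice is analytic, is positive on $(-1,1)$, and has $\operatorname{Re}\sqrt{1-z^2}>0$ throughout $\Omega_z$. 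It follows that $1+\sqrt{1-z^2}$ has real part greater than $1$, so it never vanishes, and $q$ is analytic on $\Omega_z$.

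\emph{The Joukowski identity.} Write $w=\sqrt{1-z^2}$, so that $q=z/(1+w)$ and $z^2=1-w^2=(1-w)(1+w)$. Then
\[
1+q^2=\frac{(1+w)^2+z^2}{(1+w)^2}=\frac{(1+w)^2+(1-w)(1+w)}{(1+w)^2}=\frac{2}{1+w}.
\]
Therefore $2q/(1+q^2)=2\cdot\frac{z}{1+w}\cdot\frac{1+w}{2}=z$, which is \eqref{eq:Joukowski-inverse}.

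\emph{The bound $|q(z)|<1$.} This is the main step. Multiplying numerator and denominator by $1-w$ gives $q=\frac{1-w}{z}$ for $z\neq0$, so $q\cdot q'=1$ where $q'=(1+w)/z$. Since $\operatorname{Re} w>0$, we have $|1+w|>|1-w|$, and this gives $|q|<|q'|$. Combined with $|q|\,|q'|=1$, it forces $|q|<1$. At $z=0$ we have $q=0$ directly.

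\emph{The case $0<z<1$.} Here $z=1/\cosh v$ with $v=v(z)>0$. Then $\sqrt{1-z^2}=\tanh v>0$, using the chosen positive branch. Hence
\[
q=\frac{1/\cosh v}{1+\tanh v}=\frac{1}{\cosh v+\sinh v}=e^{-v},
\]
which is \eqref{eq:q-exp-u}.

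The only real subtlety is confirming that the chosen branch has positive real part on all of $\Omega_z$. This follows from the observation above that $1-z^2$ never lies on $(-\infty,0]$ inside $\Omega_z$.
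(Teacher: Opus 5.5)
Your proof is correct, but the key step $|q|<1$ goes by a different route from the paper.

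\textbf{How the paper argues.} The paper never identifies the branch of $\sqrt{1-z^2}$ concretely. It observes that $|q(z)|=1$ is impossible on $\Omega_z$: putting $q=e^{i\theta}$ into \eqref{eq:Joukowski-inverse} forces $z=1/\cos\theta\in(-\infty,-1]\cup[1,\infty)$. It then concludes from connectedness of $\Omega_z$, continuity of $q$, and $q(0)=0$. Similarly, for $0<z<1$ it only notes that $q(z)$ and $e^{-v(z)}$ both lie in $(0,1)$ and solve the Joukowski equation. That quadratic has roots with product $1$, so it has only one root in $(0,1)$.

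\textbf{How you argue.} You identify the branch as the principal square root composed with $1-z^2$, which gives $\Re\sqrt{1-z^2}>0$ on all of $\Omega_z$. From this single fact you get two things at once. First, $1+\sqrt{1-z^2}$ never vanishes, which the paper merely asserts. Second, you get the explicit identity $|q|^2=|1-w|/|1+w|<1$. You then verify $q=e^{-v}$ by direct computation with $\tanh v$.

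\textbf{What each buys.} Your route is more quantitative and self-contained. The paper's is softer and more conceptual: the Joukowski map sends the unit circle exactly onto the two slits. It also does not depend on knowing which concrete branch was chosen.

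\textbf{A small slip.} Your sentence saying $1-z^2\in(-\infty,0]$ when $z$ is purely imaginary is false as written, though you correct it immediately. What you mean is that $1-z^2$ is real exactly when $z$ is real or purely imaginary, and it is $\le 0$ only for real $z$ with $|z|\geq 1$.
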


\begin{proof}
As the chosen square root is analytic on $\Omega_z$ and $1+\sqrt{1-z^2} \neq 0$ on $\Omega_z$, $q(z)$ is analytic on $\Omega_z$.  Starting from \eqref{eq:def-q} and solving algebraically for $z$ gives
\eqref{eq:Joukowski-inverse}.

We next prove \eqref{eq:q-unit-disc}.  First, $|q(z)|$ can never equal $1$ on
$\Omega_z$.  Indeed, if $q(z)=e^{i\theta}$, then
\[
z=\frac{2e^{i\theta}}{1+e^{2i\theta}}
 =\frac{1}{\cos\theta},
\]
which lies in $(-\infty,-1]\cup[1,\infty)$ (with the case
$\cos\theta=0$ corresponding to the point at infinity), contrary to
$z\in\Omega_z$. As $\Omega_z$ is connected, and $q$ is continuous with $q(0)=0$ and $|q(z)| \neq 1$ on $\Omega_z$, we have $|q(z)|<1$.


Finally, for $0<z<1$ we have $v=\arcosh(1/z)>0$. As $q(z)$ and $e^{-v}$ lie in $(0,1)$ and solve
\eqref{eq:Joukowski-inverse}, $q(z)=e^{-v(z)}$.
\end{proof}

For $s\geq0$ define
\begin{equation}\label{eq:def-As}
A_s(z):=
1+\frac{q(z)^{p^s}+q(z)^{2p^s}+\cdots+q(z)^{(p-1)p^s}}
        {1+q(z)^{p^{s+1}}}.
\end{equation}
Equivalently, summing the finite geometric series in the numerator gives
\begin{equation}\label{eq:As-product-form}
A_s(z)=
\frac{1-q(z)^{(p+1)p^s}}
     {(1-q(z)^{p^s})(1+q(z)^{p^{s+1}})}.
\end{equation}

\begin{proposition}\label{prop:global-continuation}
The product
\begin{equation} \label{eq:G-q-product}
    \mathcal G(z)=\prod_{s\ge0} A_s(z)
\end{equation}
converges normally on compact subsets of $\Omega_z$ and defines a
holomorphic extension of $\mathcal G$ from the unit disk to $\Omega_z$.
Consequently, $\mathcal B$ also extends holomorphically to $\Omega_z$.
In particular, the only possible singularities of $\mathcal B$ on the
unit circle are $z=1$ and $z=-1$.
\end{proposition}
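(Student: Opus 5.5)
The plan is to prove normal convergence of $\sum_s |A_s(z)-1|$ on compact subsets $K\subset\Omega_z$, identify the limit with $\mathcal G$ near $0$, and then continue by uniqueness. Fix a compact $K\subset\Omega_z$. By Lemma~\ref{lem:q-coordinate}, $q$ is continuous on $K$ with $|q|<1$, so there is $r=r_K<1$ with $|q(z)|\le r$ on $K$. From \eqref{eq:def-As},
\[
|A_s(z)-1|\le \frac{\sum_{j=1}^{p-1}|q|^{jp^s}}{1-|q|^{p^{s+1}}}\le \frac{(p-1)\,r^{p^s}}{1-r}
\]
on $K$. Since $\sum_s r^{p^s}<\infty$, the series $\sum_s\sup_K|A_s-1|$ converges. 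Each $A_s$ is holomorphic on $\Omega_z$: $q$ is holomorphic there and the denominator $1+q^{p^{s+1}}$ does not vanish because $|q|<1$. The standard Weierstrass theorem on infinite products then gives locally uniform convergence of $\prod_s A_s$ to a holomorphic function on $\Omega_z$.

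Next, identify this product with $\mathcal G$ on the unit disk. For $0<z<1$, \eqref{eq:q-exp-u} gives $q=e^{-v(z)}$. Comparing \eqref{eq:def-As} with \eqref{eq:hp-forms} at $y=p^sv$ shows $A_s(z)=1+h_p(p^sv(z))=S_s(z/2)$ by Proposition~\ref{prop:one-scale}.

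It remains to show $\prod_s S_s(z/2)=\mathcal G(z)$ as numbers, not only as formal power series (Proposition~\ref{prop:digit-product}). All $W_n$ have nonnegative coefficients, so for $0<z<1$ the partial sums $\sum_{n<p^{r+1}}W_n(z/2)=\prod_{s\le r}S_s(z/2)$ increase to $\sum_n W_n(z/2)=\mathcal G(z)$. This sum is finite because $\mathcal B$ has radius $1$ (Lemma~\ref{lem:radius-one}) and Lemma~\ref{lem:B-via-G} holds. Monotone convergence then gives equality on $(0,1)$.

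Both sides are holomorphic on the unit disk $\subset\Omega_z$. For the product this is the previous step; for $\mathcal G$ it follows from \eqref{eq:B-via-G}. By the identity theorem they agree on the whole disk, so $\prod_s A_s$ is the holomorphic extension of $\mathcal G$ to $\Omega_z$. Defining $\mathcal B=\frac2z(\mathcal G-1)$ on $\Omega_z\setminus\{0\}$ extends $\mathcal B$, and by Lemma~\ref{lem:B-via-G} the extension is analytic at $0$.

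Finally, $\Omega_z$ contains every point of the unit circle except $\pm1$. So $\mathcal B$ continues analytically across every other boundary point, and the only possible singularities on $|z|=1$ are $z=\pm1$.

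The main point needing care is the uniform bound $|q|\le r_K<1$ on compacts, which rests on \eqref{eq:q-unit-disc} together with compactness. The passage from the formal identity to a pointwise identity also needs attention, and positivity of the coefficients handles it.
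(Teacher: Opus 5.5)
Your proof is correct and follows the same route as the paper: the uniform bound $|q|\le r_K<1$ on compacts gives $\sup_K|A_s-1|\ll r_K^{p^s}$, on $(0,1)$ the product is identified with $\prod_s S_s(z/2)$ via $q=e^{-v}$, and the identity theorem completes the continuation. Your monotone-convergence argument (using nonnegativity of the coefficients of the $W_n$) proves the numerical equality $\prod_s S_s(z/2)=\mathcal G(z)$ on $(0,1)$, which makes explicit a step the paper only asserts when it says the product ``agrees with the formal product''.
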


\begin{proof}
For $0<z<1$, equations \eqref{eq:q-exp-u}, \eqref{eq:hp-forms}, and
\eqref{eq:S-hyperbolic} give
\[
A_s(z)=S_s(z/2).
\]
Hence, on this real interval, the product in \eqref{eq:G-q-product} agrees
with the formal product for $\mathcal G$.

Let $K\subset\Omega_z$ be compact.  By Lemma~\ref{lem:q-coordinate}, there is
$r_K<1$ such that
\[
|q(z)|\leq r_K
\qquad(z\in K).
\]
From the defining form \eqref{eq:def-As},
\begin{equation}\label{eq:As-minus-one-bound}
A_s(z)-1
=
\frac{q(z)^{p^s}+q(z)^{2p^s}+\cdots+q(z)^{(p-1)p^s}}
     {1+q(z)^{p^{s+1}}}.
\end{equation}
The numerator is bounded in absolute value by
\[
\sum_{j=1}^{p-1}r_K^{jp^s}
\leq (p-1)r_K^{p^s},
\]
while
\[
|1+q(z)^{p^{s+1}}|
\geq1-r_K^{p^{s+1}}
\geq1-r_K^p>0.
\]
Therefore
\begin{equation}\label{eq:As-minus-one-OK}
\sup_{z\in K}|A_s(z)-1|
\leq C_K r_K^{p^s}
\end{equation}
for a constant $C_K$ independent of $s$.  Since
\[
\sum_{s\geq0}r_K^{p^s}<\infty,
\]
the series $\sum_s\sup_K|A_s-1|$ converges. The standard convergence
criterion for infinite products of holomorphic functions therefore shows
that $\prod_sA_s$ converges uniformly on $K$ and defines a holomorphic
function there.  Since $K$ was arbitrary, the convergence is normal on
compact subsets of $\Omega_z$.

The resulting holomorphic function agrees with $\mathcal G$ on the interval
$0<z<1$, so the identity theorem identifies it with the analytic continuation
of $\mathcal G$.

Finally, \eqref{eq:B-from-G} defines the continuation of $\mathcal B$ for
$z\neq0$.  At $z=0$ there is no new singularity: on the original disk,
\eqref{eq:B-via-G} shows that $\mathcal G(z)-1$ is divisible by $z$, so the
quotient extends there with the original power-series value of
$\mathcal B$.  Since the slit domain excludes the rays beginning at $\pm1$,
the only points of the unit circle not covered by this continuation are
$z=1$ and $z=-1$.
\end{proof}

For the coefficient-transfer argument later in the paper, we shall work in a
domain that approaches both boundary points while avoiding the two slit rays.
Fix $0<\phi<\pi/2$ and $\rho>0$ and write
\begin{equation}\label{eq:def-double-delta}
\Delta_{\phi,\rho}:=
\left\{
\begin{array}{l}
|z|<1+\rho,\quad z\neq\pm1,\\[2mm]
|\Arg(z-1)|>\phi,\\[1mm]
|\Arg(z+1)|<\pi-\phi
\end{array}
\right\}.
\end{equation}
For any $\rho>0$, this open doubly indented disk is contained in
$\Omega_z$.

\subsection[The local variable at $z=1$]{The local variable at $z=1$}
Inside $\Delta_{\phi,\rho}$ near $z=1$, choose the
branch of $\log q(z)$ that tends to $0$ as $z\to1$ and set
\begin{equation}\label{eq:def-u-eta-local}
u(z):=-\log q(z),
\qquad
\eta(z):=\frac{u(z)^2}{4}.
\end{equation}
For $0<z<1$, $u(z)$ agrees with $u(z)=v(z)$. 

By the definition of $u(z)$, there exists a small open disk $B$ around $z=1$ so that $u(z)$ is analytic on $D=B \cap \Delta_{\phi,\rho}.$ Consequently, $\eta(z)$ is analytic on $D$. Note that, as a subset of $\Delta_{\phi,\rho}$, $1 \not \in D$.


\begin{lemma}\label{lem:eta-local}
The function $\eta$ extends analytically across $z=1$.  More precisely,
\begin{equation}\label{eq:eta-factor}
\eta(z)=(1-z)\Xi(z),
\qquad
\Xi(1)=\frac12,
\end{equation}
where $\Xi$ is analytic near $z=1$.  In particular,
\begin{equation}\label{eq:eta-asymp}
\eta(z)=\frac{1-z}{2}+O((1-z)^2).
\end{equation}
Moreover, 
\begin{equation}\label{eq:eta-sector}
|\arg\eta(z)|\leq\pi-\varepsilon
\end{equation}
for some $\varepsilon>0$ on a small indented neighborhood of $z=1$, i.e., on the intersection of $\Delta_{\phi,\rho}$ with a small neighborhood of $z=1.$
\end{lemma}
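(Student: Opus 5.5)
The plan is to use the identity $z=1/\cosh u(z)$, which follows from \eqref{eq:Joukowski-inverse} together with $q=e^{-u}$: indeed $2q/(1+q^2)=2/(q^{-1}+q)=1/\cosh u$. Hence on $D$
\[
1-z=1-\frac{1}{\cosh u}=\frac{\cosh u-1}{\cosh u}=\frac{2\sinh^2(u/2)}{\cosh u}.
\]
The right-hand side is an even entire-in-$u$ expression, analytic near $u=0$, of the form $G(u^2/4)$ with
\[
G(w)=\frac{2\sinh^2(\sqrt w)}{\cosh(2\sqrt w)}=2w+O(w^2),
\]
and $G$ is analytic near $w=0$ because the expression is even in $\sqrt w$. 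Thus $1-z=G(\eta(z))$ with $G(0)=0$ and $G'(0)=2\neq0$. By the holomorphic inverse function theorem, $G$ has an analytic inverse $G^{-1}$ near $0$, with $G^{-1}(\zeta)=\zeta/2+O(\zeta^2)$. Since $\eta(z)\to0$ as $z\to1$ (because $u\to0$ by the choice of branch), for $z\in D$ close to $1$ we get $\eta(z)=G^{-1}(1-z)$. This provides the analytic extension across $z=1$, and writing $G^{-1}(\zeta)=\zeta\,\widetilde\Xi(\zeta)$ with $\widetilde\Xi(0)=1/2$ gives \eqref{eq:eta-factor} with $\Xi(z)=\widetilde\Xi(1-z)$. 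Then \eqref{eq:eta-asymp} is immediate.

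The one point needing care is justifying $\eta=G^{-1}(1-z)$ rather than some other preimage. I would argue that $G$ is injective on a small disk around $0$, and that $\eta(z)$ lies in that disk for $z\in D$ near $1$ (by continuity of $u$ and $u\to0$). Then $G(\eta(z))=1-z$ forces $\eta(z)=G^{-1}(1-z)$.

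For the sector bound \eqref{eq:eta-sector}: on $\Delta_{\phi,\rho}$ near $1$ we have $|\Arg(z-1)|>\phi$, i.e.\ $|\arg(1-z)|<\pi-\phi$. From $\eta=(1-z)\Xi(z)$ we get $\arg\eta=\arg(1-z)+\arg\Xi(z)$ modulo $2\pi$. Since $\Xi(z)\to1/2$, shrinking the neighborhood makes $|\arg\Xi(z)|<\phi/2$, so $|\arg\eta|\le\pi-\phi/2$ and we may take $\varepsilon=\phi/2$. This step is routine; the main (mild) obstacle is the branch/injectivity bookkeeping above, and the evenness argument showing $G$ is analytic in $w$.
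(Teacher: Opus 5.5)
Your proposal is correct and follows essentially the same route as the paper: both write $z=1/\cosh u$, use evenness in $u$ to get an analytic relation in $u^2$ (you invert $1-z=G(\eta)$, while the paper inverts $z$ as a function of $w=u^2$), apply the holomorphic inverse function theorem, and deduce the sector bound from $\Xi(z)\to 1/2$ together with the indentation. Your explicit injectivity argument identifying $\eta(z)$ with the local inverse is a point the paper leaves implicit.
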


\begin{proof}
From $q=e^{-u}$ and \eqref{eq:Joukowski-inverse},
\[
z=\frac{2e^{-u}}{1+e^{-2u}}=\frac1{\cosh u}.
\]
The function $1/\cosh u$ is even in $u$, so it is analytic as a function of
$w=u^2$ near $w=0$. Its Taylor expansion is
\[
z=1-\frac{w}{2}+\frac{5w^2}{24}+O(w^3).
\]
The derivative of $z$ with respect to $w$ at $w=0$ is $-1/2\neq0$.  Hence the
analytic inverse function theorem gives an analytic function $w=w(z)$ near
$z=1$ satisfying
\[
w=2(1-z)+O((1-z)^2).
\]
Since $w=u^2$, we obtain
\[
\eta(z)=\frac{u^2}{4}
       =\frac{1-z}{2}+O((1-z)^2).
\]
Thus $\eta$ extends analytically across $z=1$, and
\[
\Xi(z):=\frac{\eta(z)}{1-z}
\]
also extends analytically there with $\Xi(1)=1/2$.  This proves
\eqref{eq:eta-factor} and \eqref{eq:eta-asymp}.

Finally, after shrinking the neighborhood, $\Xi(z)$ stays arbitrarily close
to the positive real number $1/2$.  The indentation keeps $1-z$ a fixed
positive angular distance away from the negative real axis.  Since
$\eta(z)=(1-z)\Xi(z)$, the same is true of $\eta(z)$, which gives
\eqref{eq:eta-sector}.
\end{proof}

For $x$ in the slit plane $|\arg x|<\pi$, define
\begin{equation}\label{eq:def-f-F}
f_p(x):=h_p(2\sqrt{x}),
\qquad
F_p(x):=\prod_{s\geq0}\bigl(1+f_p(p^{2s}x)\bigr),
\end{equation}
where $\sqrt{x}$ is the principal square root.

\begin{proposition}\label{prop:local-G-F}
In a sufficiently small indented neighborhood of $z=1$,
\begin{equation}\label{eq:local-G-F}
\mathcal G(z)=F_p(\eta(z)).
\end{equation}
\begin{equation}\label{eq:F-functional}
F_p(x)=\bigl(1+f_p(x)\bigr)F_p(p^2x).
\end{equation}
\end{proposition}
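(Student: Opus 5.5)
The functional equation \eqref{eq:F-functional} is the easy half, so I would dispose of it first. On the slit plane $|\arg x|<\pi$ the argument of $p^2x$ equals that of $x$, so every factor $1+f_p(p^{2s}x)$ is defined. Splitting off the $s=0$ factor and reindexing $s\mapsto s+1$ then gives \eqref{eq:F-functional} formally. It becomes an identity of functions once I know the product converges there, and this is the first thing to check. For $\mathrm{Re}\sqrt x>0$, put $y=2\sqrt{x}$. By \eqref{eq:hp-forms},
\[
f_p(x)=\frac{e^{-y}+\cdots+e^{-(p-1)y}}{1+e^{-py}},
\]
and this is small whenever $\mathrm{Re}\,y$ is large. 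On a compact subset of the slit plane, $\mathrm{Re}\sqrt{p^{2s}x}=p^s\mathrm{Re}\sqrt x\ge c\,p^s$ for some $c>0$. Hence $|f_p(p^{2s}x)|\le C e^{-2cp^s}$ for $s$ large, and these terms are summable. The product therefore converges normally on compacta and defines $F_p$ as a holomorphic function. One caveat: $1+e^{-py}$ may vanish for finitely many small $s$ if $x$ is far from the real axis. I will only need $x$ in a small sector neighbourhood of $0$ with $|\arg x|\le\pi-\varepsilon$. There $|\arg y|\le(\pi-\varepsilon)/2$, so $\mathrm{Re}(py)>0$ and $|e^{-py}|<1$, and no denominator vanishes. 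Even without that restriction, $F_p$ is meromorphic and the identity holds.

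For \eqref{eq:local-G-F}, I would compare factors in the $q$-coordinate. In the indented neighbourhood $D$ of $z=1$ we have $q(z)=e^{-u(z)}$ by the definition \eqref{eq:def-u-eta-local}, and $|q|<1$ by Lemma~\ref{lem:q-coordinate}, so $\mathrm{Re}\,u>0$. The key point is that $2\sqrt{\eta(z)}=u(z)$ with the principal root. This holds because $\eta=u^2/4$ and $\mathrm{Re}\,u>0$: the principal square root of $u^2/4$ is the root with positive real part, namely $u/2$. Lemma~\ref{lem:eta-local} ensures $\eta(z)$ lies in the slit plane, so the principal root is defined. Consequently $\sqrt{p^{2s}\eta}=p^s u/2$, and then
\[
1+f_p(p^{2s}\eta(z))=1+\frac{\sum_{j=1}^{p-1}e^{-jp^su}}{1+e^{-p^{s+1}u}}=A_s(z),
\]
by the exponential form \eqref{eq:hp-forms} and the definition \eqref{eq:def-As}, using $q^{p^s}=e^{-p^su}$. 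Taking the product over $s$ and applying Proposition~\ref{prop:global-continuation} gives $\mathcal G(z)=\prod_s A_s(z)=F_p(\eta(z))$ on $D$.

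The main obstacle is branch bookkeeping: making sure $\sqrt{\eta}=u/2$ rather than $-u/2$ throughout $D$. I would handle it by the positivity of $\mathrm{Re}\,u$ argued above. As a cross-check, on $0<z<1$ both sides are real and positive with $u=v>0$, and by continuity and connectedness of $D$ the choice persists.
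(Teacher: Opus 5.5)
Your proof is correct and follows the paper's argument: you identify $A_s(z)=1+h_p(p^su(z))=1+f_p(p^{2s}\eta(z))$ using $q=e^{-u}$ and $2\sqrt{p^{2s}\eta}=p^su$, multiply over $s$, and split off the $s=0$ factor to get \eqref{eq:F-functional}; your derivation of $\mathrm{Re}\,u>0$ from $|q|<1$ spells out the branch choice that the paper only asserts. One minor point: your caveat that $1+e^{-py}$ might vanish is unnecessary, because $\mathrm{Re}\,y=2\,\mathrm{Re}\sqrt{x}>0$ on the whole slit plane, which gives $|e^{-py}|<1$, so no denominator ever vanishes there.
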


\begin{proof}
In a small indented neighborhood of $1$, we have $q=e^{-u}$. Then the definition of $A_s$ together with
Proposition~\ref{prop:one-scale} gives
\[
A_s(z)=1+h_p(p^su(z)).
\]
Because $\eta=u^2/4$, we have
\[
2\sqrt{p^{2s}\eta(z)}=p^su(z)
\]
with the local choices of branches just fixed.  Hence
\[
A_s(z)
=1+h_p(p^su(z))
=1+f_p(p^{2s}\eta(z)).
\]
Taking products over $s\geq0$ proves \eqref{eq:local-G-F}.  Finally,
separating the $s=0$ factor from the product defining $F_p$ gives $\eqref{eq:F-functional}$.
\end{proof}

\section{Mellin analysis and the periodic amplitude}\label{sec:mellin}

In this section, we apply Mellin transform to $S_p(y)$, a harmonic sum of the logarithm of an infinite product function derived from $\mathcal{G}(z)$. The Mellin transform of $S_p(y)$ is a quotient of a function $g_p^*(s)$ and $1-p^{-s}$. The poles arising from the denominator $1-p^{-s}$, together with Mellin inversion and the residue theorem, lead to an asymptotic expansion of $S_p(y)$ with a periodic function.

\subsection{The one-scale logarithm}

Put
\begin{equation}\label{eq:def-H-g}
y:=2\sqrt{x},
\qquad
H_p(y):=\prod_{s\geq0}\bigl(1+h_p(p^sy)\bigr),
\qquad
g_p(y):=\log\bigl(1+h_p(y)\bigr).
\end{equation}
Then
\[
F_p(x)=H_p(2\sqrt{x}),
\qquad
\log H_p(y)=\sum_{s\geq0}g_p(p^sy).
\]
We also set
\begin{equation}\label{eq:def-a0}
a_0:=\log(1+h_p(0))=\log\!\left(\frac{p+1}{2}\right).
\end{equation}

By \cite{FlGoDu95}, the Mellin transform of a function $f:(0,\infty)\to\C$ is
defined by
\begin{equation}\label{def:Mellin Transform}
    f^*(s):=\int_0^\infty f(x)x^{s-1}\,dx.
\end{equation}

There is a largest open vertical strip, $\{s=\sigma+ i t \mid \alpha <\sigma <\beta\}$ for some $\alpha,\ \beta \in \R$,  where $f^*(s)$ converges. The number $\alpha$ (resp. $\beta)$ is determined by the order of $f(x)$ when $x \to 0$ (resp. $x \to \infty$). See \cite[Fig. 3]{FlGoDu95}.

\begin{lemma}\label{lem:g-basic}
Fix $0<\theta<\pi/2$ and let
\[
\Sigma_\theta:=\{y\neq0:|\arg y|<\theta\}.
\]
The function $g_p$ has a holomorphic branch on $\Sigma_\theta$ such that
\begin{align}
g_p(y)
 &=a_0-\frac{p(p-1)}{12}y^2+O(y^4)
 &&(y\to0),
 \label{eq:g-small}\\
g_p(y)
 &=O(e^{-\Re y})
 &&(|y|\to\infty),
 \label{eq:g-large}
\end{align}
uniformly on every closed subsector of $\Sigma_\theta$.
\end{lemma}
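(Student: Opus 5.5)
The plan is to work entirely from the exponential form \eqref{eq:one-plus-hp},
\[
1+h_p(y)=\frac{1-e^{-(p+1)y}}{(1-e^{-y})(1+e^{-py})},
\]
and to take $g_p=\log(1+h_p)$ as a sum of three logarithms. In each one the branch is the principal one, or a branch fixed by continuity from the positive axis.

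\emph{Holomorphy on $\Sigma_\theta$.} I will show that $1+h_p$ is holomorphic and zero-free on the simply connected sector $\Sigma_\theta$. Then a branch of the logarithm exists, normalized to be real on $(0,\infty)$.

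First, the poles. Zeros of $1-e^{-y}$ occur only at $y\in 2\pi i\Z$, which misses $\Sigma_\theta$, and zeros of $1+e^{-py}$ occur only at $y\in i\pi(2\Z+1)/p$, also on the imaginary axis. Since $\theta<\pi/2$, none of these lie in the sector.

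Second, the zeros. The numerator $1-e^{-(p+1)y}$ vanishes only on $2\pi i\Z/(p+1)$, again imaginary, so it has no zeros in the sector.

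\emph{Small-$y$ expansion \eqref{eq:g-small}.} The three functions $\log\frac{1-e^{-(p+1)y}}{(p+1)y}$, $\log\frac{1-e^{-y}}{y}$ and $\log\frac{1+e^{-py}}{2}$ are analytic near $0$. Since $1+h_p$ is even (visible from the $\sinh$ form \eqref{eq:hp-forms}), the factor $y$ cancels and $g_p$ is an even analytic function near $0$ with value $\log\frac{p+1}{2}=a_0$. Hence only even powers appear, and the $O(y^4)$ error follows.

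For the $y^2$ coefficient I use $\log\frac{\sinh(cy/2)}{cy/2}=\frac{c^2y^2}{24}+O(y^4)$ and $\log\cosh(cy/2)=\frac{c^2y^2}{8}+O(y^4)$, applied to
\[
1+h_p=\frac{\sinh((p+1)y/2)}{2\sinh(y/2)\cosh(py/2)}.
\]
This gives the coefficient
\[
\frac{(p+1)^2-1}{24}-\frac{p^2}{8}=\frac{p^2+2p-3p^2}{24}=-\frac{p(p-1)}{12},
\]
which matches \eqref{eq:g-small}. I will double-check this arithmetic, since the constant is what the later Mellin analysis uses.

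\emph{Large-$|y|$ bound \eqref{eq:g-large}.} On a closed subsector $|\arg y|\le\theta'<\theta$ we have $\Re y\ge|y|\cos\theta'\to\infty$, so $|e^{-y}|=e^{-\Re y}$ is small. Then \eqref{eq:hp-forms} gives
\[
h_p(y)=\frac{e^{-y}+\cdots}{1+e^{-py}}=O(e^{-\Re y}).
\]
Because $h_p\to0$, for large $|y|$ the chosen branch of $\log(1+h_p)$ is the principal one. This needs a connectedness check: the branch is continuous and real on the positive axis, and the region $\{|h_p|<1/2\}$ within the subsector is connected near infinity. It follows that $|g_p|\le 2|h_p|=O(e^{-\Re y})$.

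The main (mild) obstacle is this uniformity and branch consistency. I plan to handle it by arguing on the subsector minus a small disk, where $|h_p|<1/2$ for $|y|\ge R$. That region is connected and contains the positive reals, so the principal $\log(1+h_p)$ agrees with the global branch there.
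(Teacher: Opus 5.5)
Your proof is correct and follows the paper's route: all zeros and poles of the exponential form of $1+h_p$ lie on the imaginary axis, so a logarithm exists on the simply connected sector; a Taylor expansion at $0$ gives \eqref{eq:g-small}; and $h_p=O(e^{-\Re y})$ gives \eqref{eq:g-large}. The differences are minor. First, you read off the $y^2$ coefficient from the logarithms of the factored $\sinh/\cosh$ form, whereas the paper sums the expansions of $\sinh(jy)/\sinh(py)$ over $j$ and then takes a logarithm. Second, you explicitly check that the chosen branch is the principal one near infinity, working on the connected region $\{|y|\ge R\}$ of the subsector (the outside of a large disk, not a ``small disk''), a point the paper passes over when it writes $g_p=h_p+O(h_p^2)$.
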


\begin{proof}
By \eqref{eq:one-plus-hp},
\[
1+h_p(y)=\frac{1-e^{-(p+1)y}}{(1-e^{-y})(1+e^{-py})}.
\]
All zeros and poles of this meromorphic function lie on the imaginary axis.
It is therefore nonzero on $\Sigma_\theta$, which is simply connected, so a
holomorphic logarithm exists there.

For the expansion at zero,
\[
\frac{\sinh(jy)}{\sinh(py)}
 =\frac jp+\frac{j(j^2-p^2)}{6p}y^2+O(y^4).
\]
Summing over $1\leq j\leq p-1$ gives
\[
h_p(y)=\frac{p-1}{2}-\frac{p(p^2-1)}{24}y^2+O(y^4),
\]
and taking the logarithm yields \eqref{eq:g-small}.

Fix $0<\theta'<\theta$. If $y$ lies in the closed subsector
$$|\arg y|\leq \theta',$$ then
$$
\Re y=|y|\cos(\arg y)\geq |y|\cos\theta'.$$
Since $\theta'<\pi/2$, we have $\cos\theta'>0$, and hence $\Re y\to\infty$ uniformly as $|y|\to\infty$ in this subsector.

Recall the exponential expression of $h_p(y)  =\frac{e^{-y}+\cdots+e^{-(p-1)y}}{1+e^{-py}}$.

For the numerator,
$$\left|e^{-y}+e^{-2y}+\cdots+e^{-(p-1)y}\right|\leq \sum_{j=1}^{p-1} e^{-j\Re y}\leq(p-1)e^{-\Re y}.$$
Moreover,
$$|e^{-py}|=e^{-p\Re y}\to0$$ uniformly in the closed subsector. Thus, for sufficiently large $|y|$,
$$|1+e^{-py}|\geq 1-|e^{-py}|\geq \frac{1}{2}.$$
Therefore
$$|h_p(y)|\leq 2(p-1)e^{-\Re y}.$$
It follows that
$$ h_p(y)=O\left(e^{-\Re y}\right)$$ as $|y|\to\infty$, uniformly on every closed subsector of $\Sigma_\theta$.
Then $g_p(y)=h_p(y)+O(h_p(y)^2)$ proves \eqref{eq:g-large}.
\end{proof}

Our next step is to study the Mellin transform of $g_p(y)$. To do that, we consider an auxiliary function
\begin{equation}\label{eq:def-r}
r_p(y):=g_p(y)-a_0(1+y)e^{-y}.
\end{equation}
Since $(1+y)e^{-y}=1-y^2/2+O(y^3)$, Lemma~\ref{lem:g-basic} gives $r_p(y)=O(y^2)$ as $y \to 0$. This regularization ensures that the Mellin transform of $r_p$ is defined in the half-plane $\Re s>-2$.

\begin{proposition}\label{prop:g-mellin}

The Mellin transform $g^*_p(s)$ is holomorphic on $\Re s>0$. Moreover, $g^*_p(s)$ extends meromorphically to $\Re s>-2$ and
\begin{equation}\label{eq:gstar-regularized}
g_p^*(s)=a_0\bigl(\Gamma(s)+\Gamma(s+1)\bigr)+r_p^*(s).
\end{equation}
Here $r_p^*$ is holomorphic on $\Re s>-2$.  The only pole of $g_p^*$ in this
half-plane is a simple pole at $s=0$ with residue $a_0$; in particular, the
apparent pole at $s=-1$ cancels.

Moreover, this transform has exponential decay on every finite vertical strip. To put it precisely, write $s=\sigma+it$. For every $\rho<\pi/2$, on every finite strip
\[
-2<\sigma_1\leq\Re s\leq\sigma_2<\infty
\] we have \begin{equation}\label{eq:gstar-vertical}
g_p^*(\sigma+it)=O(e^{-\rho|t|})
\end{equation}
uniformly for $\sigma\in[\sigma_1,\sigma_2]$ as $|t|\to\infty$. 
\end{proposition}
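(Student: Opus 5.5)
The plan is to treat the two pieces of \eqref{eq:gstar-regularized} separately. Holomorphy of $g_p^*$ on $\Re s>0$ comes straight from Lemma~\ref{lem:g-basic}. On the positive real axis $g_p$ is bounded near $0$ by \eqref{eq:g-small} and decays like $e^{-y}$ at infinity by \eqref{eq:g-large}. Hence $\int_0^\infty g_p(y)y^{s-1}\dd y$ converges absolutely and locally uniformly for $\Re s>0$, and differentiation under the integral gives holomorphy.

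For the continuation, I would subtract the regularizer. The Mellin transform of $(1+y)e^{-y}$ is $\Gamma(s)+\Gamma(s+1)$ on $\Re s>0$, so on $\Re s>0$ we have $g_p^*=a_0(\Gamma(s)+\Gamma(s+1))+r_p^*$. Now $r_p(y)=O(y^2)$ as $y\to0$ (noted after \eqref{eq:def-r}) and $r_p(y)=O(e^{-y}(1+y))$ as $y\to\infty$. So $r_p^*(s)$ converges absolutely for $\Re s>-2$ and is holomorphic there, which gives \eqref{eq:gstar-regularized} as a meromorphic continuation.

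The poles in $\Re s>-2$ come from $a_0(\Gamma(s)+\Gamma(s+1))=a_0\Gamma(s)(1+s)$. The pole of $\Gamma$ at $s=-1$ is cancelled by the factor $1+s$; concretely, $\operatorname{Res}_{s=-1}\Gamma(s)=-1$ and $\operatorname{Res}_{s=-1}\Gamma(s+1)=1$. What remains is a simple pole at $s=0$ with residue $a_0\cdot\Gamma(1)=a_0$.

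For the vertical decay, I would rotate the contour. Both $g_p$ and $(1+y)e^{-y}$ are holomorphic on $\Sigma_\theta$ for any $\theta<\pi/2$, so $r_p$ is too. On every closed subsector, $r_p(y)=O(|y|^2)$ near $0$ and $r_p(y)=O(|y|e^{-\Re y})$ at infinity, uniformly; the small-$y$ bound holds uniformly because the expansion \eqref{eq:g-small} is analytic at $0$. For $-2<\Re s$ and $|\varphi|\le\theta'<\theta$, Cauchy's theorem lets me move the integration ray to $y=re^{i\varphi}$; the arcs near $0$ and $\infty$ vanish by these bounds. This gives
\[
r_p^*(s)=e^{i\varphi s}\int_0^\infty r_p(re^{i\varphi})r^{s-1}\dd r .
\]
Taking $\varphi=\theta'\operatorname{sgn}(t)$ gives $|e^{i\varphi s}|=e^{-\theta'|t|}$. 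The remaining integral is bounded uniformly for $\sigma\in[\sigma_1,\sigma_2]$, using the $O(r^2)$ bound at $0$ and the $e^{-r\cos\theta'}$ bound at $\infty$. Since $\theta'<\pi/2$ is arbitrary, this yields $r_p^*=O(e^{-\rho|t|})$ for every $\rho<\pi/2$.

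The gamma terms are handled by Stirling's formula. It gives $|\Gamma(\sigma+it)|\asymp|t|^{\sigma-1/2}e^{-\pi|t|/2}$ uniformly on strips, and similarly for $\Gamma(s+1)$. These are $O(e^{-\rho|t|})$ for any $\rho<\pi/2$, with the polynomial factor absorbed by the strict inequality.

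The main obstacle is justifying the ray rotation uniformly near $\Re s=-2$. At the small end, the integrand is $O(r^{\sigma+1})$ on the arc $|y|=\varepsilon$, whose length is $O(\varepsilon)$, so the arc contributes $O(\varepsilon^{\sigma+2})\to0$ because $\sigma>-2$. At the large end, the arc contributes $O(R^{\sigma+1}e^{-R\cos\theta'})\to0$. I also need the $O(y^2)$ bound for $r_p$ to hold uniformly in the sector, not just on the real axis; this follows because $g_p-a_0(1+y)e^{-y}$ is holomorphic in a full disk around $0$ and vanishes to second order there.
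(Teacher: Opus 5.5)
Your proposal is correct and follows the same route as the paper: you subtract the regularizer $a_0(1+y)e^{-y}$, use absolute convergence of $r_p^*$ on $\Re s>-2$, read off the poles from the factorization $\Gamma(s)+\Gamma(s+1)=(1+s)\Gamma(s)$, and bound the gamma terms by Stirling. The only difference is that you carry out the ray-rotation argument for the exponential decay of $r_p^*$ by hand, whereas the paper cites Proposition~5 of \cite{FlGoDu95}, which is proved by exactly that rotation.
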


\begin{proof}
We first consider the Mellin transform of the auxiliary function $r_p(y)$.  Let
$$K\subset\{s\in\mathbb C:\Re s>-2\}$$ be compact, and put
\[
 \sigma_-:=\min_{s\in K}\Re s,
 \qquad
 \sigma_+:=\max_{s\in K}\Re s.
\]
By Lemma~\ref{lem:g-basic} and the definition of $r_p$, we have
$r_p(y)=O(y^2)$ as $y\to0$ and
$r_p(y)=O((1+y)e^{-y})$ as $y\to+\infty$.  Hence, uniformly for $s\in K$,
\[
 |r_p(y)y^{s-1}|\ll y^{\sigma_-+1}
 \quad(0<y\leq1),
\]
and
\[
 |r_p(y)y^{s-1}|
 \ll (1+y)e^{-y}y^{\sigma_+-1}
 \quad(y\geq1).
\]
Both majorants are integrable, the first because $\sigma_->-2$.
Therefore the integral defining $r_p^*(s)$ converges absolutely and locally
uniformly on $\Re s>-2$.  The truncated integrals
\[
 \int_\varepsilon^R r_p(y)y^{s-1}\,\dd y
\]
are entire functions of $s$, and the Weierstrass theorem for locally uniform
limits of holomorphic functions
\cite[Chapter~5, \S1.1, Theorem~1]{Ahlfors79} shows that $r_p^*$ is
holomorphic on $\Re s>-2$.  This is also the standard fundamental-strip
argument for Mellin transforms; see \cite[\S2]{FlGoDu95}.

For $\Re s>0$, the definition of the Gamma function gives
\begin{align*}
 \int_0^\infty (1+y)e^{-y}y^{s-1}\,\dd y
 &=\int_0^\infty e^{-y}y^{s-1}\,\dd y
   +\int_0^\infty e^{-y}y^s\,\dd y \\
 &=\Gamma(s)+\Gamma(s+1).
\end{align*}
Since $g_p(y)=a_0(1+y)e^{-y}+r_p(y)$, it follows that
\begin{equation*}
 g_p^*(s)
 =a_0\bigl(\Gamma(s)+\Gamma(s+1)\bigr)+r_p^*(s).
\end{equation*}
This is \eqref{eq:gstar-regularized}.  Its right-hand side is meromorphic on
$\Re s>-2$ and agrees with $g_p^*$ on $\Re s>0$, so it gives the asserted
meromorphic continuation.

The functional equation $\Gamma(s+1)=s\Gamma(s)$ gives
\cite[\S5.5(i)]{NIST10}
\[
 \Gamma(s)+\Gamma(s+1)=(1+s)\Gamma(s).
\]
In the half-plane $\Re s>-2$, the only poles of $\Gamma(s)$ are the simple
poles at $s=0$ and $s=-1$; see \cite[\S5.2(i)]{NIST10}.  The factor $1+s$
cancels the pole at $s=-1$, whereas it is nonzero at $s=0$.  Since
$\operatorname{Res}_{s=0}\Gamma(s)=1$ and $r_p^*$ is holomorphic there,
$g_p^*$ has a unique pole in this half-plane, namely a simple pole at $s=0$
with residue $a_0$.

It remains to prove the vertical estimate.  Fix $0<\rho<\pi/2$ and choose 
$\rho<\theta<{\pi}/{2}.$

As $r_p(y)=g_p(y)-a_0(1+y)e^{-y}$, Lemma~\ref{lem:g-basic} shows that $r_p$ is holomorphic in
$\Sigma_\theta$, and decays exponentially
at infinity on the closed subsector $|\arg y|\leq\rho$. We also have $r_p(y)=O(y^2)$ at zero. Proposition~5 of
\cite{FlGoDu95} gives
\begin{equation*}\label{eq:rpstar-vertical-proof}
 r_p^*(\sigma+it)=O(e^{-\rho|t|})
\end{equation*}
uniformly for $\sigma$ in any fixed closed finite subinterval of $(-2,\infty)$.

Finally, Stirling's formula in vertical strips gives
\cite[\S5.11(ii), Eq.~(5.11.9)]{NIST10}
\[
 |\Gamma(\sigma+it)|
 \ll (1+|t|)^{\sigma-1/2}e^{-\pi|t|/2},
\]
uniformly when $\sigma$ ranges over a fixed finite interval. Hence   $|\Gamma(\sigma+it)|
 \ll e^{-(\pi-\epsilon)|t|/2}$ for any $\epsilon>0$. The same estimate applies to
$\Gamma(\sigma+1+it)$. Hence, $$\Gamma(\sigma+it)+\Gamma(\sigma+1+it)=O(e^{-\rho|t|}).$$

As $g_p^*(s)=a_0\bigl(\Gamma(s)+\Gamma(s+1)\bigr)+r_p^*(s)$, we have \eqref{eq:gstar-vertical} uniformly
for $\sigma\in[\sigma_1,\sigma_2]$.
\end{proof}

\subsection{The pole lattice}

Define the harmonic sum
\begin{equation}\label{eq:def-Sp}
S_p(y):=\sum_{k\geq0}g_p(p^ky)=\log H_p(y).
\end{equation}

For each fixed $y>0$, the series defining $S_p(y)$ converges absolutely,
since Lemma~\ref{lem:g-basic} gives
\[
g_p(p^ky)=O(e^{-p^ky})
\qquad (k\to\infty).
\]
Now let $s=\sigma+it$ with $\sigma>0$.  The same lemma implies
\[
I_\sigma:=\int_0^\infty |g_p(u)|u^{\sigma-1}\,\dd u<\infty:
\]
near zero the integrand is $O(u^{\sigma-1})$, while at infinity it
decays exponentially.  After the change of variables $u=p^ky$, we obtain
\[
\sum_{k\geq0}\int_0^\infty
 |g_p(p^ky)y^{s-1}|\,\dd y
 =
I_\sigma\sum_{k\geq0}p^{-k\sigma}<\infty.
\]
Fubini's theorem therefore applies, and gives
\begin{align}
S_p^*(s)
&=\sum_{k\geq0}\int_0^\infty g_p(p^ky)y^{s-1}\,\dd y \notag\\
&=g_p^*(s)\sum_{k\geq0}p^{-ks}
 =\frac{g_p^*(s)}{1-p^{-s}}.
\label{eq:Sp-Mellin-factor}
\end{align}
This is the standard separation property for harmonic sums; see
\cite[Lemma~2]{FlGoDu95}.

\begin{proposition}\label{prop:Sp-periodic} 
Fix $0<\sigma<1$.  There is a $1$-periodic function
$\widetilde\Phi$, holomorphic in every horizontal strip
\begin{equation}\label{eq:Phi-strip}
|\Im u|<h,
\qquad
h<\frac{\pi}{\log(p^2)},
\end{equation}
such that, uniformly for $y\to0$ in every closed sector
$|\arg y|\leq\theta_0<\pi/2$,
\begin{equation}\label{eq:Sp-asymptotic}
S_p(y)=2\alpha\log\frac1y
 +\widetilde\Phi\!\left(\log_p\frac1y\right)
 +O(|y|^{2\sigma}),
\end{equation}
where
\begin{equation}\label{eq:def-alpha}
\alpha:=\frac{a_0}{2\log p}.
\end{equation}
Consequently, with $\widetilde\Omega:=e^{\widetilde\Phi}$,
\begin{equation}\label{eq:Hp-asymptotic}
H_p(y)=y^{-2\alpha}
\widetilde\Omega\!\left(\log_p\frac1y\right)
\bigl(1+O(|y|^{2\sigma})\bigr).
\end{equation}
\end{proposition}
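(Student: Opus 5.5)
We argue by Mellin inversion applied to the factorization \eqref{eq:Sp-Mellin-factor}. First take $y>0$. For any $c>0$, Mellin inversion gives
\[
S_p(y)=\frac1{2\pi i}\int_{c-i\infty}^{c+i\infty}\frac{g_p^*(s)}{1-p^{-s}}\,y^{-s}\dd s .
\]
Convergence is absolute because of the vertical decay \eqref{eq:gstar-vertical}, and because $|1-p^{-s}|$ is bounded below on $\Re s=c$. We then shift the contour to $\Re s=-2\sigma$. This line lies inside $\Re s>-2$, where Proposition~\ref{prop:g-mellin} gives the meromorphic continuation, and it avoids the poles.

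In the strip $-2\sigma\le\Re s\le c$ the integrand has poles only at the zeros of $1-p^{-s}$, namely $s=\chi_k:=2\pi ik/\log p$ for $k\in\Z$. This is because $g_p^*$ has only the pole at $s=0$ there. The residues are as follows.
\begin{itemize}
\item At $s=0$ the pole is double: $g_p^*(s)=a_0/s+b_0+O(s)$, and $(1-p^{-s})^{-1}=1/(s\log p)+1/2+O(s)$. The residue of $y^{-s}g_p^*(s)/(1-p^{-s})$ is therefore
\[
\frac{a_0}{\log p}\log\frac1y+\Bigl(\frac{b_0}{\log p}+\frac{a_0}{2}\Bigr).
\]
Since $a_0/\log p=2\alpha$ by \eqref{eq:def-alpha}, the first term is the leading term $2\alpha\log(1/y)$.
\item At $s=\chi_k$ with $k\ne0$ the pole is simple, with residue $\frac{g_p^*(\chi_k)}{\log p}\,y^{-\chi_k}$. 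Here $y^{-\chi_k}=e^{2\pi ik\log_p(1/y)}$.
\end{itemize}
We set
\[
\widetilde\Phi(u):=\frac{b_0}{\log p}+\frac{a_0}2+\frac1{\log p}\sum_{k\ne0}g_p^*(\chi_k)e^{2\pi iku}.
\]
By \eqref{eq:gstar-vertical} with any $\rho<\pi/2$, the coefficients satisfy $|g_p^*(\chi_k)|\ll e^{-2\pi\rho|k|/\log p}$. Hence the series converges and defines a $1$-periodic function holomorphic for $|\Im u|<\rho/\log p$. Letting $\rho\uparrow\pi/2$ gives holomorphy for $|\Im u|<\pi/(2\log p)=\pi/\log(p^2)$, which is \eqref{eq:Phi-strip}.

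Justifying the contour shift needs two estimates. First, the horizontal segments at heights $T_N=(2N+1)\pi/\log p$, which lie midway between poles, must tend to $0$. This holds because $1-p^{-s}$ is uniformly bounded below on them and $g_p^*$ decays exponentially. Second, the remaining integral on $\Re s=-2\sigma$ is $O(y^{2\sigma})$, since $|y^{-s}|=y^{2\sigma}$ there, $|1-p^{-s}|$ is bounded below on that line, and $g_p^*$ is integrable on it. This gives \eqref{eq:Sp-asymptotic} for real $y\to0^+$.

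The step I expect to be the main obstacle is extending the result to complex $y$ with $|\arg y|\le\theta_0<\pi/2$, uniformly. I would do it by analytic continuation of the inversion formula. For $y=|y|e^{i\psi}$, the identity $|y^{-s}|=|y|^{-\Re s}e^{\psi\Im s}$ shows that the inversion integral still converges absolutely provided $\rho>\theta_0$ in \eqref{eq:gstar-vertical}, which is possible since $\theta_0<\pi/2$. Both sides of the inversion formula are then holomorphic in $y$ on the sector $|\arg y|<\rho$: the left side by Lemma~\ref{lem:g-basic}, where the series $\sum_k g_p(p^ky)$ converges locally uniformly, and the right side by dominated convergence. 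They agree on $(0,\infty)$, so they agree on the whole sector. The same residue computation then applies verbatim. The error integral is bounded by
\[
|y|^{2\sigma}\int_{\R}|g_p^*(-2\sigma+it)|\,e^{\theta_0|t|}\,\dd t,
\]
which is finite and uniform in $\arg y$. The residue series in $\widetilde\Phi(\log_p(1/y))$ also converges uniformly, since $\Im\log_p(1/y)=-\arg y/\log p$ stays inside the strip of holomorphy.

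Finally we exponentiate \eqref{eq:Sp-asymptotic}, using $H_p=e^{S_p}$ from \eqref{eq:def-Sp}. Because $\widetilde\Phi$ is bounded on the relevant closed strip, the error term becomes $e^{O(|y|^{2\sigma})}=1+O(|y|^{2\sigma})$. This gives \eqref{eq:Hp-asymptotic} with $\widetilde\Omega=e^{\widetilde\Phi}$.
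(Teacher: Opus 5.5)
Your proposal is correct and follows essentially the same route as the paper: Mellin inversion of $g_p^*(s)/(1-p^{-s})$, a contour shift to $\Re s=-2\sigma$ justified by horizontal segments at heights $(2N+1)\pi/\log p$, residues at the lattice $2\pi i k/\log p$ (with the double pole at $0$ producing $2\alpha\log(1/y)$), and the vertical decay of $g_p^*$ giving both the strip of holomorphy of $\widetilde\Phi$ and uniformity in closed sectors. You make the constant term $b_0/\log p+a_0/2$ explicit and use the identity theorem to check that the inversion integral equals the series $\sum_k g_p(p^ky)$ for complex $y$, which the paper states more briefly, but the argument is otherwise the paper's.
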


\begin{proof}
Choose $c>0$.  Mellin inversion (see
\cite[Theorem~2]{FlGoDu95}) and \eqref{eq:Sp-Mellin-factor} give
\begin{equation}\label{eq:Sp-Mellin-inversion}
S_p(y)=\frac1{2\pi i}\int_{c-i\infty}^{c+i\infty}
\frac{g_p^*(s)}{1-p^{-s}}y^{-s}\dd s.
\end{equation}

The decay in \eqref{eq:gstar-vertical} makes the integral absolutely and locally uniformly convergent for $|\arg y|<\pi/2$, and hence it defines a holomorphic function there. For $y>0$, Mellin inversion shows that this function agrees with $S_p(y)$. It therefore gives the analytic continuation of the
series \eqref{eq:def-Sp} throughout $|\arg y|<\pi/2$.

To prove \eqref{eq:Sp-asymptotic}, we consider rectangular contours and shift the contour in \eqref{eq:Sp-Mellin-inversion} to
$\Re s=-2\sigma$.  As 
$|y^{-(-2\sigma+it)}|\leq |y|^{2\sigma}e^{\theta_0|t|}$, while
\eqref{eq:gstar-vertical} is available with an exponent
$\rho>\theta_0$, the integral on the new vertical line is
$O(|y|^{2\sigma})$ uniformly in every closed sector.  

Proposition~\ref{prop:g-mellin} shows that no pole occurs
at $s=-1$.  The poles crossed are $s_m:={2\pi i m}/{\log p}$ for $ m\in\Z.$ To justify the contour shift, truncate the contour by rectangles whose
horizontal sides lie at $\Im s=\pm T_N$ where $T_N$ denotes ${(2N+1)\pi}/{\log p}$. On these
horizontal sides, $|1-p^{-s}|$ is bounded away from zero, while
\eqref{eq:gstar-vertical} implies that the corresponding integrals tend
to zero as $N\to\infty$. The residue theorem therefore justifies the
contour shift.

The pole at $s=0$ is double, because both $g_p^*$ and
$(1-p^{-s})^{-1}$ have simple poles there.  Using
\[
g_p^*(s)=\frac{a_0}{s}+O(1),
\qquad
\frac1{1-p^{-s}}=\frac1{s\log p}+\frac12+O(s),
\]
we find that its residue contribution is
\[
\frac{a_0}{\log p}\log\frac1y+C_0
 =2\alpha\log\frac1y+C_0
\]
for a real constant $C_0$.  For $m\neq0$, the pole comes only from the
geometric factor and contributes
\[
\frac{g_p^*(s_m)}{\log p}y^{-s_m}
 =\frac{g_p^*(s_m)}{\log p}
   e^{2\pi i m\log_p(1/y)}.
\]
Let $\widetilde\Phi(u)$ denote the $1$-periodic function defined by
\begin{equation}\label{eq:def-Phi-tilde}
C_0+\frac1{\log p}\sum_{m\neq0}g_p^*(s_m)e^{2\pi imu}.
\end{equation}
Thus the sum of the residues is $2\alpha \log(1/y)+\widetilde\Phi(\log_p(1/y)$ and $$S_p(y)=2\alpha\log\frac1y
 +\widetilde\Phi\!\left(\log_p\frac1y\right)
 +O(|y|^{2\sigma}).$$

It remains to show that the periodic function $\widetilde\Phi$ is
holomorphic in the strips specified in \eqref{eq:Phi-strip}. At $s=s_m={2\pi i m}/{\log p},$
the vertical decay estimate \eqref{eq:gstar-vertical} gives $g_p^*(s_m)=O(e^{-\lambda|m|})$ for every $\lambda<\pi^2/\log p$. If $|\Im u|\leq h$, then $|e^{2\pi imu}|
\leq e^{2\pi h|m|},$
and hence
\[
|g_p^*(s_m)e^{2\pi imu}|
\ll e^{-(\lambda-2\pi h)|m|}.
\]
Therefore, whenever $h<\lambda/(2\pi)$, the Fourier series
\eqref{eq:def-Phi-tilde} converges normally in the strip
$|\Im u|<h$ and therefore defines a holomorphic function there.
Since $\lambda$ may be chosen arbitrarily close to
$\pi^2/\log p$, this holds for every $h<{\pi}/{2\log p}={\pi}/{\log p^2},$
which proves \eqref{eq:Phi-strip}. This completes the proof of
\eqref{eq:Sp-asymptotic}; exponentiating then gives
\eqref{eq:Hp-asymptotic}. 
\end{proof}

\subsection[The asymptotic expansion of $F_p$]{The asymptotic expansion of $F_p$}

\begin{proposition}\label{prop:F-periodic}
Fix $0<\sigma<1$.  There exists a $1$-periodic function $\Omega$, holomorphic
in every strip $|\Im u|<h$ with $h<\pi/\log(p^2)$, such that, uniformly as
$x\to0$ in every closed sector $|\arg x|\leq\pi-\varepsilon$,
\begin{equation}\label{eq:F-periodic-asymptotic}
F_p(x)=x^{-\alpha}
\Omega\!\left(\log_{p^2}\frac1x\right)
\bigl(1+O(|x|^\sigma)\bigr).
\end{equation}
The function $\Omega$ is real and strictly positive on the real axis and is
bounded above and below there by positive constants.
\end{proposition}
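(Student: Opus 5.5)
The plan is to deduce this from Proposition~\ref{prop:Sp-periodic} by the substitution $y=2\sqrt{x}$, since $F_p(x)=H_p(2\sqrt x)$ by definition.

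\emph{Transferring sectors and error terms.} If $|\arg x|\leq\pi-\varepsilon$, then $|\arg y|\leq(\pi-\varepsilon)/2=:\theta_0<\pi/2$. So \eqref{eq:Hp-asymptotic} applies uniformly in that closed sector, with error $O(|y|^{2\sigma})=O(|x|^{\sigma})$. The principal-branch conventions need to be checked: on $|\arg x|<\pi$ the principal $\sqrt x$ lands in $|\arg y|<\pi/2$, where the analytic continuation of $S_p=\log H_p$ from Proposition~\ref{prop:Sp-periodic} lives. Hence $F_p(x)=\exp S_p(2\sqrt x)$ there. This holds on the positive reals by definition, and extends by the identity theorem because the infinite product defining $F_p$ converges and is analytic and nonvanishing in the sector, by the same estimate as in Proposition~\ref{prop:global-continuation}.

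\emph{Rewriting in the new variable.} We have $y^{-2\alpha}=2^{-2\alpha}x^{-\alpha}$, with principal powers consistent. We also have
\[
\log_p\frac1y=\log_p\frac1{2\sqrt x}=\tfrac12\log_p\frac1x-\log_p2=\log_{p^2}\frac1x-\log_p 2 .
\]
So we set
\[
\Omega(u):=2^{-2\alpha}\,\widetilde\Omega(u-\log_p2)=2^{-2\alpha}\exp\widetilde\Phi(u-\log_p 2).
\]
It is $1$-periodic. A real translate of a function holomorphic in $|\Im u|<h$ is holomorphic in the same strip, so $\Omega$ is holomorphic for $h<\pi/\log(p^2)$. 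The claimed formula \eqref{eq:F-periodic-asymptotic} then follows directly.

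\emph{Positivity.} This is the only step needing a separate argument. For $y>0$, every factor $1+h_p(p^sy)$ is real and at least $1$, so $S_p(y)$ is real. The expansion \eqref{eq:Sp-asymptotic} then shows that $\widetilde\Phi(\log_p(1/y))$ is real up to $O(y^{2\sigma})$.

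Fix $u_0\in\R$ and take $y_n=p^{-u_0-n}\to0$. Periodicity gives $\Im\widetilde\Phi(u_0)=\lim_n O(y_n^{2\sigma})=0$, so $\widetilde\Phi$ is real on $\R$. As a continuous periodic real function it is bounded, so $\Omega=2^{-2\alpha}e^{\widetilde\Phi}$ is real, strictly positive, and bounded above and below by positive constants on $\R$.

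One can also check that the constant $C_0$ is real. This follows from $\overline{g_p^*(s)}=g_p^*(\bar s)$, since $g_p$ is real on $(0,\infty)$, which gives $\overline{g_p^*(s_m)}=g_p^*(s_{-m})$ and hence the same conclusion.

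I expect no serious obstacle. The main care is the branch bookkeeping in the analytic continuation: one must confirm that the holomorphic logarithm of $H_p$ used in Proposition~\ref{prop:Sp-periodic} agrees with $\log$ of the product throughout the sector, which holds by the identity theorem from the positive axis.
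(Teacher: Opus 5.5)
Your proposal is correct and follows the paper's proof: you substitute $y=2\sqrt x$ into \eqref{eq:Hp-asymptotic} and define $\Omega(u)=2^{-2\alpha}\widetilde\Omega(u-\log_p2)$, exactly as the paper does. The only differences are minor. You get reality of $\widetilde\Phi$ on $\R$ from reality of $S_p$ on $(0,\infty)$ together with periodicity and a limit $y_n\to0$, whereas the paper uses that the Fourier coefficients come in conjugate pairs (the alternative you also mention). You also spell out the branch check $F_p(x)=\exp S_p(2\sqrt x)$, which the paper leaves implicit.
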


\begin{proof}
Set $y=2\sqrt{x}$ in \eqref{eq:Hp-asymptotic}.  Since
\[
(2\sqrt{x})^{-2\alpha}=2^{-2\alpha}x^{-\alpha},
\qquad
\log_p\frac1{2\sqrt{x}}
 =\log_{p^2}\frac1x-\log_p2,
\]
we may take
\begin{equation}\label{eq:def-Omega}
\Omega(u):=2^{-2\alpha}
\widetilde\Omega(u-\log_p2).
\end{equation}
This gives \eqref{eq:F-periodic-asymptotic} and preserves the strip of
holomorphy and period $1$.

For real $u$, the Fourier coefficients in
\eqref{eq:def-Phi-tilde} occur in conjugate pairs, so
$\widetilde\Phi(u)$ is real.  Hence $\widetilde\Omega(u)>0$, and the same is
true of $\Omega(u)$.  A continuous positive periodic function has a positive
minimum and a finite maximum on the real axis.
\end{proof}

\begin{corollary}\label{cor:F-Fourier}
Fix $0<\sigma<1$ and set
\[
\tau_m:=\frac{2\pi m}{\log(p^2)},
\qquad
\beta_m:=\alpha+i\tau_m,
\qquad m\in\mathbb Z.
\]
Then, in every closed sector
\[
|\arg x|\leq\pi-\varepsilon,
\qquad \varepsilon>0,
\]
we have
\begin{equation}\label{eq:F-Fourier-expansion}
F_p(x)
=
\sum_{m\in\mathbb Z}c_mx^{-\beta_m}
+
O\left(|x|^{-\alpha+\sigma}\right)
\qquad (x\to0),
\end{equation}
where $(c_m)_{m\in\mathbb Z}$ are the Fourier coefficients of the
periodic function $\Omega$ in Proposition~\ref{prop:F-periodic}. The series in
\eqref{eq:F-Fourier-expansion} converges normally on compact subsets of the
sector.

Moreover, for every
\begin{equation}\label{eq:cm-decay-threshold}
    0<\lambda_c<\frac{2\pi^2}{\log(p^2)},
\end{equation}
there exists $C_{\lambda_c}>0$ such that
\begin{equation}\label{eq:cm-decay}
|c_m|
\leq
C_{\lambda_c}e^{-\lambda_c|m|}
\qquad(m\in\mathbb Z).
\end{equation}
In particular, $\lambda_c$ may be chosen so that $\lambda_c>{\pi^2}/{\log(p^2)}.$
\end{corollary}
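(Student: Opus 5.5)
The plan is to start from Proposition~\ref{prop:F-periodic} and expand the periodic factor $\Omega$ into its Fourier series. Since $\Omega$ is $1$-periodic and holomorphic in every strip $|\Im u|<h$ with $h<\pi/\log(p^2)$, we write $\Omega(u)=\sum_m c_m e^{2\pi i m u}$. Substituting $u=\log_{p^2}(1/x)$ gives $e^{2\pi i m u}=x^{-2\pi i m/\log(p^2)}=x^{-i\tau_m}$, where $x^{-i\tau_m}$ is the principal power consistent with the branch used for $x^{-\alpha}$. Hence $x^{-\alpha}\Omega(\log_{p^2}(1/x))=\sum_m c_m x^{-\beta_m}$. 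The error term in \eqref{eq:F-periodic-asymptotic} is $x^{-\alpha}\Omega(\cdot)\,O(|x|^\sigma)$, so the expansion \eqref{eq:F-Fourier-expansion} follows once $\Omega(\log_{p^2}(1/x))$ is bounded uniformly in the closed sector.

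For that bound, note that in the sector $|\arg x|\le\pi-\varepsilon$ we have $\Im\log_{p^2}(1/x)=-\arg x/\log(p^2)$, so $|\Im u|\le(\pi-\varepsilon)/\log(p^2)<\pi/\log(p^2)$. Thus $u$ stays in a closed strip $|\Im u|\le h_0$ with $h_0<\pi/\log(p^2)$, where $\Omega$ is holomorphic and periodic, and hence bounded. Normal convergence of $\sum c_m x^{-\beta_m}$ on compact subsets of the sector follows from the coefficient decay: $|c_m x^{-\beta_m}|\le |c_m|\,|x|^{-\alpha}e^{\tau_m|\arg x|}$, and $\tau_m|\arg x|\le 2\pi|m|h_0$. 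This is summable as long as $\lambda_c>2\pi h_0$, which the decay estimate will give.

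For \eqref{eq:cm-decay}, I would use the standard Cauchy estimate for Fourier coefficients of a periodic function holomorphic in a strip. Shifting the integration line in $c_m=\int_0^1\Omega(u)e^{-2\pi i m u}\,\dd u$ to $\Im u=-\operatorname{sgn}(m)h$, which is legitimate by periodicity since the vertical sides cancel, gives $|c_m|\le \sup_{|\Im u|=h}|\Omega|\,e^{-2\pi h|m|}$ for every $h<\pi/\log(p^2)$. So $\lambda_c=2\pi h$ can be taken to be any number below $2\pi^2/\log(p^2)$. In particular it can exceed $\pi^2/\log(p^2)$, which also exceeds $2\pi h_0$ once $\varepsilon$ is small enough relative to the chosen $h$. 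In general we simply choose $h\in(h_0,\pi/\log(p^2))$.

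The only delicate point is branch bookkeeping: we must check that the powers $x^{-\beta_m}$ defined with the principal logarithm really match $x^{-\alpha}e^{2\pi i m\log_{p^2}(1/x)}$ throughout the slit sector, and that $\Omega$, which is given on the strip only through its $1$-periodic Fourier series \eqref{eq:def-Phi-tilde} composed with an exponential and a real shift, remains holomorphic on the same strips. Both follow from \eqref{eq:def-Omega} and Proposition~\ref{prop:Sp-periodic}, since exponentiation and the real translation by $\log_p 2$ preserve holomorphy and periodicity.
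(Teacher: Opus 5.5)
Your proposal is correct and is essentially the paper's argument: Fourier-expand $\Omega$, use holomorphy in the strips $|\Im u|<h<\pi/\log(p^2)$ to get $|c_m|\le C_he^{-2\pi h|m|}$, take $h>(\pi-\varepsilon)/\log(p^2)$ so the series stays normally convergent after $u=\log_{p^2}(1/x)$, and insert it into Proposition~\ref{prop:F-periodic}; you also make explicit the boundedness of $\Omega$ on the closed strip needed for the error term, which the paper leaves implicit. The only slip is your aside that $\lambda_c$ exceeds $2\pi h_0$ ``once $\varepsilon$ is small enough'', which has the dependence backwards (smaller $\varepsilon$ enlarges $h_0$, so it is $h$ that must be pushed toward $\pi/\log(p^2)$), but your final choice $h\in(h_0,\pi/\log(p^2))$ is exactly right.
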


\begin{proof}
For every $h<\pi/\log(p^2)$, since $\Omega(u)$ is periodic and holomorphic in the strip
$|\Im u|<h$, its the Fourier coefficients $c_m$, of $\Omega(u)$ satisfy
\[
|c_m|\leq C_he^{-2\pi h|m|}.
\]
This is equivalent to \eqref{eq:cm-decay} for every $\lambda_c$ in
\eqref{eq:cm-decay-threshold}.  For a fixed sector
$|\arg x|\leq\pi-\varepsilon$, choose $h>(\pi-\varepsilon)/\log(p^2)$.
The Fourier series then remains normally convergent after the substitution
$u=\log_{p^2}(1/x)$.  Using \eqref{eq:F-periodic-asymptotic} gives
\eqref{eq:F-Fourier-expansion}.
\end{proof}

\section{Boundary singularities and coefficient extraction}\label{sec:transfer}

In this section we study the boundary singularities of $\mathcal B(z)$ in order
to extract the asymptotics of its coefficients by singularity analysis
\cite[Chapter~VI]{FlSe09}. In its basic $O$-transfer form, the method
says, roughly, that if $f$ is analytic in a suitable indented domain at $z=1$ and
\[
f(z)=O\left(|1-z|^{-\lambda}\right)
\qquad (z\to1)
\]
for some $\lambda>0$, then
\[
[z^k]f(z)=O\left(k^{\lambda-1}\right).
\]
Thus the coefficient problem is reduced to determining the local behavior
of the generating function at its boundary singularities.

Since
\[
\mathcal B(z)=\frac{2}{z}\bigl(\mathcal G(z)-1\bigr),
\]
Proposition~\ref{prop:global-continuation} shows that the only possible
singularities of $B$ on the unit circle are $z=1$ and $z=-1$. The point
$z=1$ is already known to be a singularity. We will show below
that $z=-1$ is also a boundary singularity, of fractional order. We therefore need to control $\mathcal B(z)$ near both
boundary points.

Near $z=1$, Corollary~\ref{cor:F-Fourier} gives a periodic singular
expansion of $F_p$. Using
$\mathcal G(z)=F_p(\eta(z)),$
we rewrite this expansion in terms of $1-z$ and isolate an explicit
singular model
$\mathcal S(z)$
whose terms are powers of $1-z$. We apply the transfer method to the individual terms of $\mathcal S$,
which suggests the form of the coefficient asymptotic and motivates the
definition of the function $\omega$. We then show that $B-\mathcal S$
is of smaller order near $z=1$. Thus, $\mathcal S$ produces the main contribution arising from
$z=1$. 

We next study the second boundary point $z=-1$. Using the local variable
$\mu(z)$, we obtain a weaker singular estimate there. Its contribution
to the coefficients is of lower order. Finally, we combine the local
estimates at $z=1$ and $z=-1$ by a two-point transfer argument to obtain
the asymptotic formula for the coefficients of $B$.

\subsection[The periodic singular model at $z=1$]{The periodic singular model at $z=1$}

Let $(c_m)$ be the Fourier coefficients from Corollary~\ref{cor:F-Fourier} and
set
\begin{equation}\label{eq:def-dm}
d_m:=2^{1+\beta_m}c_m,
\qquad
\beta_m=\alpha+i\tau_m.
\end{equation}
The coefficients $d_m$ satisfy
\begin{equation}\label{eq:dm-decay}
|d_m|\leq Ce^{-\lambda_d|m|}
\end{equation}
for some
\begin{equation}\label{eq:lambda-d}
\lambda_d>\frac{\pi^2}{\log(p^2)}.
\end{equation}
Choose the indentation angle $\phi$ sufficiently close to $\pi/2$ that
\begin{equation}\label{eq:angle-condition}
\frac{2\pi(\pi-\phi)}{\log(p^2)}<\lambda_d.
\end{equation}
All powers of $1-z$ below use the branch of $\log(1-z)$ obtained by
continuation from $0<z<1$ inside $\Delta_{\phi,\rho}$.

Define the pure periodic singular model \begin{equation}\label{eq:def-singular-model}
\mathcal S(z):=\sum_{m\in\Z}d_m(1-z)^{-\beta_m}.
\end{equation}
Condition \eqref{eq:angle-condition} implies normal convergence on compact
subsets of $\Delta_{\phi,\rho}$. It is clear that $\mathcal{S}(z)$ is singular at $z=1$ while analytic across $z=-1$ from \eqref{eq:def-singular-model} and \eqref{eq:dm-decay}.

\subsection[The first boundary point $z=1$]{The first boundary point $z=1$}

\begin{proposition}\label{prop:B-minus-S-at-one}
Fix $0<\sigma<1$. As $z\to1$ in $\Delta_{\phi,\rho}$,
\begin{equation}\label{eq:B-minus-S-one}
\mathcal B(z)-\mathcal S(z)
 =-2+O\bigl(|1-z|^{-\alpha+\sigma}\bigr).
\end{equation}
\end{proposition}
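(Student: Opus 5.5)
We will start from $\mathcal B(z)=\frac2z(\mathcal G(z)-1)$ and, by Proposition~\ref{prop:local-G-F}, $\mathcal G(z)=F_p(\eta(z))$ in a small indented neighborhood of $z=1$. By Lemma~\ref{lem:eta-local}, $\eta(z)=(1-z)\Xi(z)$ with $\Xi(1)=1/2$, and $\eta(z)$ stays in a closed sector $|\arg x|\le\pi-\varepsilon$. This lets us apply Corollary~\ref{cor:F-Fourier} at $x=\eta(z)$:
\[
\mathcal G(z)=\sum_m c_m\eta(z)^{-\beta_m}+O(|1-z|^{-\alpha+\sigma}).
\]
Hence
\[
\mathcal B(z)=\frac2z\sum_m c_m\eta^{-\beta_m}-\frac2z+O(|1-z|^{-\alpha+\sigma}).
\]
The term $-2/z=-2+O(|1-z|)$ produces the constant $-2$, and $O(|1-z|)$ is absorbed into the error since $-\alpha+\sigma<1$.

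The main step is comparing $\frac2z\sum_m c_m\eta^{-\beta_m}$ with $\mathcal S(z)=\sum_m 2^{1+\beta_m}c_m(1-z)^{-\beta_m}$. We write
\[
\eta^{-\beta_m}=(1-z)^{-\beta_m}\,\Xi(z)^{-\beta_m}=2^{\beta_m}(1-z)^{-\beta_m}\bigl(2\Xi(z)\bigr)^{-\beta_m}.
\]
The branches are compatible: $\arg\eta=\arg(1-z)+\arg\Xi$ with $\arg\Xi$ small, which we check by continuity from $0<z<1$. Since $2\Xi(z)=1+O(|1-z|)$ and $1/z=1+O(|1-z|)$, we get
\[
\frac2z\,\bigl(2\Xi(z)\bigr)^{-\beta_m}-2=O\bigl((1+|\beta_m|)\,e^{|\tau_m|\,|\arg 2\Xi|}\,|1-z|\bigr),
\]
using $|w^{-\beta}-1|\le|\beta\log w|\,e^{|\beta\log w|}$ for $w$ near $1$. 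This holds uniformly in $m$ only if $|\tau_m|\,|1-z|$ stays bounded, which fails for large $m$. So for $|1-z|$ small, $|\log 2\Xi|\le C|1-z|$, and the bound becomes $|\beta_m|\,C|1-z|\,e^{C|\tau_m||1-z|}$. The factor $e^{C|\tau_m||1-z|}$ is at most $e^{\epsilon|m|}$ once $|1-z|$ is small, and this loss is harmless.

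Consequently the difference is bounded by
\[
|1-z|\sum_m |c_m|\,|\beta_m|\,|(1-z)^{-\beta_m}|\,e^{\epsilon|m|}.
\]
Now $|(1-z)^{-\beta_m}|=|1-z|^{-\alpha}e^{\tau_m\arg(1-z)}\le|1-z|^{-\alpha}e^{(\pi-\phi)|\tau_m|}$. Convergence of the $m$-sum then follows from the decay \eqref{eq:cm-decay} together with the angle condition \eqref{eq:angle-condition}, after choosing $\epsilon$ small enough that a margin remains. This gives a total error of $O(|1-z|^{1-\alpha})\subseteq O(|1-z|^{-\alpha+\sigma})$.

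The obstacle I expect is precisely this uniformity in $m$. The growth factor $e^{\tau_m\arg(1-z)}$ must be matched against the decay of $c_m$; the hypothesis $\lambda_d>\pi^2/\log(p^2)$ in \eqref{eq:lambda-d} and the choice of $\phi$ close to $\pi/2$ in \eqref{eq:angle-condition} exist exactly to allow this. I would also verify that the error term from Corollary~\ref{cor:F-Fourier}, $O(|\eta|^{-\alpha+\sigma})$, transfers to $O(|1-z|^{-\alpha+\sigma})$, which it does since $|\eta|\asymp|1-z|$.
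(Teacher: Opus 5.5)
Your proposal is correct and follows the paper's proof essentially step for step. The paper likewise writes $\frac{2}{z}c_m\eta(z)^{-\beta_m}=d_m a_m(z)(1-z)^{-\beta_m}$ with $a_m(z)=z^{-1}(2\Xi(z))^{-\beta_m}$, and bounds $|a_m(z)-1|\ll(1+|m|)|1-z|e^{C|m||1-z|}$ via $|e^w-1|\le|w|e^{|w|}$. It then absorbs the $e^{C|m||1-z|}$ loss against $|d_m|e^{(\pi-\phi)|\tau_m|}$ by restricting to $|1-z|<r_0$ with $Cr_0<\lambda_d-K$, which is exactly your $e^{\epsilon|m|}$ margin argument.
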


\begin{proof}

Recall that $1+(z/2)\mathcal{B}(z)=F_p(\eta(z))$ near $z=-1$ by Lemma \ref{lem:B-via-G} and Proposition \ref{prop:local-G-F} where $$F_p(x)=\sum_{m\in\Z}c_mx^{-\beta_m}
       +O(|x|^{-\alpha+\sigma})
\qquad(x\to0)$$ by Corollary \ref{cor:F-Fourier} and $\eta(z)$ defined \eqref{eq:def-u-eta-local}. Therefore, \begin{equation}\label{eq: B(z) c_m and eta}
    \mathcal{B}(z)=2/z \Big(\sum_{m \in \Z} c_m \eta(z)^{-\beta_m} +O(| \eta(z)|^{-\alpha+\sigma}) -1 \Big).
\end{equation}

Write $\eta(z)=(1-z)\Xi(z)$ as in Lemma \ref{eq:eta-factor} and
$d_m=2^{1+\beta_m}c_m$, we have 
\begin{equation}\label{eq:am-definition}
 \frac{2}{z}c_m\eta(z)^{-\beta_m}
 =d_m a_m(z)(1-z)^{-\beta_m},
 \qquad
 a_m(z):=z^{-1}(2\Xi(z))^{-\beta_m}.
\end{equation}
The branches in this formula are the ones vanishing at $1$.  In particular, $a_m(1)=1$.

First, we show that the series $\sum_{m \in \Z} d_m (a_m(z)-1)(1-z)^{- \beta_m}$ converges absolutely and is $O(|1-z|^{1-\alpha}).$ For this, we first estimate $a_m(z)-1$ independent of $m$.  Set
\[
 w_m(z):=-\log z-\beta_mL(z),
\]
where $L(z)$ is defined as $\log(2\Xi(z))$, with the branches of $\log z$ and $L(z)$  vanishing at $1$. After shrinking the neighborhood of $1$, we may assume both $\log z$ and $L(z)$ are analytic near $z=1$.

Analyticity then gives 
\begin{equation}\label{eq:L-bound}
 |L(z)|\ll |1-z|.
\end{equation} and 
\begin{equation}\label{eq:logz-bound}
 |\log z|\ll |1-z|.
\end{equation}

Since $\beta_m=\alpha+\frac{2\pi i m}{\log(p^2)}$, the triangle inequality gives
\[
|\beta_m|\leq \alpha+\frac{2\pi}{\log(p^2)}|m|\ll 1+|m|,
\]
uniformly in $m$.
estimates \eqref{eq:L-bound} and \eqref{eq:logz-bound} imply
\begin{equation}\label{eq:wm-bound}
 |w_m(z)|\ll (1+|m|)|1-z|.
\end{equation}
We use the elementary inequality
\begin{equation}\label{eq:exp-inequality}
 |e^w-1|\leq |w|e^{|w|}
 \qquad (w\in\mathbb C).
\end{equation}
Applying \eqref{eq:exp-inequality} to $w=w_m(z)$ and absorbing the bounded
factor $e^{C|1-z|}$ into the implied constant gives
\begin{equation}\label{eq:am-bound}
 |a_m(z)-1|
 \ll (1+|m|)|1-z|e^{C|m||1-z|}.
\end{equation}
for some constant $C$.

It remains to sum this estimate.  On the indented domain,
$|\arg(1-z)|\leq\pi-\phi$, and hence
\begin{align}
 |(1-z)^{-i\tau_m}|
 &=\exp\bigl(\tau_m\arg(1-z)\bigr) \\
 &\leq\exp\bigl(|\tau_m|(\pi-\phi)\bigr)
 =e^{K|m|}
\end{align}
for a positive constant $K$. Consequently,
\begin{equation}\label{eq:power-bound}
 |(1-z)^{-\beta_m}|
 \leq |1-z|^{-\alpha}e^{K|m|}.
\end{equation}
Choose $r_0>0$ sufficiently small that
\[
 Cr_0<\lambda_d-K.
\]
For $|1-z|<r_0$, the bounds on $d_m$, \eqref{eq:am-bound}, and
\eqref{eq:power-bound} yield
\begin{align*}
 &\sum_{m\in\mathbb Z}
 |d_m|\,|a_m(z)-1|\,|(1-z)^{-\beta_m}| \\
 &\qquad\ll
 |1-z|^{1-\alpha}
 \sum_{m\in\mathbb Z}(1+|m|)
 e^{-(\lambda_d-K-Cr_0)|m|} \\
 &\qquad=O\bigl(|1-z|^{1-\alpha}\bigr).
\end{align*}
As the series $\sum_{m\in\mathbb Z}
 |d_m|\,|a_m(z)-1|\,|(1-z)^{-\beta_m}|$ and $S(z)$ converges near $z=1$, we may  have term-by-term operations on the series and have
\begin{equation}\label{eq:main-series-comparison}
 \frac{2}{z}\sum_{m\in\mathbb Z}c_m\eta(z)^{-\beta_m}
 -\Scal(z)
 =O\bigl(|1-z|^{1-\alpha}\bigr).
\end{equation}

Combining \eqref{eq: B(z) c_m and eta}, \eqref{eq:am-definition} and \eqref{eq:main-series-comparison} and using
$$
|\eta(z)|\asymp |1-z|
$$
and
$$
-\frac{2}{z}=-2+O(|1-z|),
$$
we obtain
$$
\mathcal B(z)-\mathcal S(z)
=
-2
+
O\left(|1-z|^{1-\alpha}\right)
+
O\left(|1-z|^{-\alpha+\sigma}\right)
+
O(|1-z|).
$$
Since $\sigma<1$,
$$
O\left(|1-z|^{1-\alpha}\right)
+
O(|1-z|)
=
O\left(|1-z|^{-\alpha+\sigma}\right).
$$
Hence
$$
\mathcal B(z)-\mathcal S(z)
=
-2+O\left(|1-z|^{-\alpha+\sigma}\right).
$$

\end{proof}

We now transfer the singular model $\mathcal S$ to coefficients.
For a fixed $m$, by \cite[Theorem~VI.1]{FlSe09}
\[
[z^k](1-z)^{-\beta_m}
\sim
\frac{k^{\beta_m-1}}{\Gamma(\beta_m)}
=
k^{\alpha-1}
\frac{k^{i\tau_m}}{\Gamma(\beta_m)}.
\]
Thus, after summing over the Fourier modes, the natural candidate for
the periodic amplitude in the coefficient asymptotic is

\begin{equation}\label{eq:def-omega}
    \omega(t):=
\sum_{m\in\mathbb Z}
\frac{d_m}{\Gamma(\beta_m)}t^{i\tau_m}.
\end{equation}
The next lemma makes this termwise heuristic rigorous.

\begin{lemma}\label{lem:periodic-transfer}
Let $\omega$ be given by \eqref{eq:def-omega}. Then its defining series
converges absolutely for every $t>0$. Moreover, the function
$$
u\longmapsto\omega(e^u),
\qquad u\in\R,
$$
extends holomorphically to a horizontal strip containing $\R$, and this
extension is $\log(p^2)$-periodic. In particular, $\omega$ is
smooth in the logarithmic variable and satisfies
\begin{equation}\label{eq:omega-periodic}
\omega(p^2t)=\omega(t),
\qquad t>0.
\end{equation}
Furthermore,
\begin{equation}\label{eq:S-coefficients}
[z^k]\mathcal S(z)
=
k^{\alpha-1}\omega(k)+O(k^{\alpha-2}).
\end{equation}
\end{lemma}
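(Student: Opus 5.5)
The proof has three parts: absolute convergence and holomorphy, periodicity, and a uniform coefficient transfer. The first two come from comparing the decay of $d_m$ with the growth of $1/\Gamma(\beta_m)$. The third needs a version of singularity analysis whose error constant is controlled in the imaginary part of the exponent.

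\textbf{Convergence and holomorphy.} By Stirling's formula in the vertical strip around $\Re s=\alpha$ (as already used in the proof of Proposition~\ref{prop:g-mellin}),
\[
|\Gamma(\beta_m)|^{-1}\ll (1+|\tau_m|)^{1/2-\alpha}e^{\pi|\tau_m|/2},
\]
so $|\Gamma(\beta_m)|^{-1}\ll e^{(\pi^2/\log(p^2)+\epsilon)|m|}$ for any $\epsilon>0$. Combining this with \eqref{eq:dm-decay}, and using $\lambda_d>\pi^2/\log(p^2)$ from \eqref{eq:lambda-d}, the terms $d_m/\Gamma(\beta_m)$ decay like $e^{-\kappa|m|}$ for some $\kappa>0$. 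Setting $t=e^u$, the $m$-th term becomes $e^{i\tau_m u}$, whose modulus is $e^{-\tau_m\Im u}\le e^{2\pi|m||\Im u|/\log(p^2)}$. The series therefore converges normally, hence holomorphically, on the strip $|\Im u|<\kappa\log(p^2)/(2\pi)$. Each term has period $\log(p^2)$ in $u$, since $\tau_m\log(p^2)=2\pi m$. This gives \eqref{eq:omega-periodic}, smoothness, and absolute convergence at every $t>0$.

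\textbf{Coefficient transfer.} The plan is to extract coefficients term by term. On $|z|<1$ the series for $\mathcal S$ converges uniformly on compacts, since $|(1-z)^{-i\tau_m}|\le e^{|\tau_m|\pi/2}$ there and the $d_m$ decay fast enough. Hence
\[
[z^k]\mathcal S=\sum_m d_m[z^k](1-z)^{-\beta_m}=\sum_m d_m\binom{k+\beta_m-1}{k}.
\]
The key input is a uniform version of the standard expansion:
\[
\binom{k+\beta-1}{k}=\frac{k^{\beta-1}}{\Gamma(\beta)}\bigl(1+E(\beta,k)\bigr),\qquad |E(\beta,k)|\ll \frac{(1+|\beta|)^2}{k},
\]
uniformly for $\Re\beta=\alpha$.

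I would prove this by writing $\binom{k+\beta-1}{k}=\Gamma(k+\beta)/(\Gamma(\beta)\Gamma(k+1))$ and applying the Stirling expansion with explicit remainder, for example through the Binet integral representation of $\log\Gamma$. That remainder gives
\[
\log\frac{\Gamma(k+\beta)}{\Gamma(k+1)}=(\beta-1)\log k+O\!\left(\frac{|\beta|^2+1}{k}\right),
\]
valid provided $|\beta|\le\sqrt{k}$, say. With the bound in hand, summing gives the main term $k^{\alpha-1}\omega(k)$ plus an error
\[
k^{\alpha-2}\sum_m |d_m|(1+|m|)^2|\Gamma(\beta_m)|^{-1}=O(k^{\alpha-2}),
\]
because $|k^{i\tau_m}|=1$ and the weighted series converges by the decay computed in the first step.

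\textbf{Main obstacle.} The hard part is the range $|m|>\sqrt k$, where the uniform Stirling bound fails. Here I would use the crude estimate $|\binom{k+\beta_m-1}{k}|\le\binom{k+\alpha-1}{k}\prod_{j<k}\bigl|1+\tfrac{i\tau_m}{j+\alpha}\bigr|$. Alternatively, I would bound $[z^k](1-z)^{-\beta_m}$ by a Cauchy integral on a circle of radius $1-1/k$, which gives roughly $k^{\alpha}e^{\pi|\tau_m|/2}$. Either way the bound is only exponential in $|m|$, and the surplus decay $e^{-\lambda_d|m|}$ turns the tail into $O(k^{\alpha}e^{-c\sqrt k})=O(k^{\alpha-2})$. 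The corresponding tail of $k^{\alpha-1}\omega(k)$ is bounded the same way.
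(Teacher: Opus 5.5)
Your proof is correct. The first part (Stirling on $\Re s=\alpha$ against the decay rate $\lambda_d>\pi^2/\log(p^2)$, normal convergence in a strip, termwise periodicity) is the same as the paper's. The coefficient transfer, however, takes a genuinely different route.

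\emph{The paper's route.} It truncates at $M=\lfloor A_1\log k\rfloor$ and treats the central modes by integrating the digamma asymptotic along the segment from $1$ to $\beta_m$; this is equivalent to your Binet/Stirling bound. It then disposes of the tail $\sum_{|m|>M}d_m(1-z)^{-\beta_m}$ with the $O$-transfer theorem in the indented domain. That step needs the angle condition \eqref{eq:angle-condition}, so that the $\Delta$-domain growth rate $\lambda_\phi$ stays below $\lambda_d$.

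\emph{Your route.} You extract coefficients exactly term by term inside the unit disk, where $|\arg(1-z)|<\pi/2$. You then bound the high modes by a Cauchy estimate on $|z|=1-1/k$, which gives $\ll k^{\alpha}e^{\pi|\tau_m|/2}$. The only condition needed is $\lambda_d>\pi^2/\log(p^2)$, which you already used for the convergence of $\omega$. So neither the angle condition nor any transfer theorem is required. The price is a lost factor of $k$. You absorb it by moving the cutoff to $|m|\asymp\sqrt k$, where the uniform Stirling bound remains valid because $(1+|\beta_m|)^2/k$ stays bounded.

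\emph{Comparison.} Your argument is more elementary and self-contained. The paper's argument reuses the transfer machinery it needs anyway in Proposition~\ref{prop:additive-asymptotic}.

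One caution: of your two tail options, only the Cauchy estimate works as written. For the product bound, $\prod_{j<k}|1+i\tau_m/(j+\alpha)|$ must be controlled through the Weierstrass product for $\Gamma$, which gives $\prod_{j<k}|1+i\tau_m/(j+\alpha)|\le\Gamma(\alpha)/|\Gamma(\beta_m)|$. The naive inequalities $|1+ix|\le e^{|x|}$ or $|1+ix|\le e^{x^2/2}$ only yield roughly $k^{|\tau_m|}$ or $e^{C\tau_m^2}$, and neither is beaten by $e^{-\lambda_d|m|}$.
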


\begin{proof}
We first establish exponential decay of the coefficients in
\eqref{eq:def-omega}. Set
$$
\lambda_\Gamma:=\frac{\pi^2}{\log(p^2)}.
$$
Stirling's formula on the vertical line $\Re s=\alpha$
\cite[\href{https://dlmf.nist.gov/5.11.E9}{(5.11.9)}]{NIST:DLMF}
gives, after enlarging the implied constant to include bounded $m$,
\begin{equation}\label{eq:inverse-gamma-growth}
\frac{1}{|\Gamma(\beta_m)|}
\ll
(1+|m|)^A e^{\lambda_\Gamma|m|}
\end{equation}
for some $A>0$. Since
$$
|d_m|\ll e^{-\lambda_d|m|}
\qquad\text{and}\qquad
\lambda_d>\lambda_\Gamma,
$$
we may choose
$$
0<\lambda_\omega<\lambda_d-\lambda_\Gamma.
$$
Using the fact that every fixed polynomial is dominated by
$e^{\varepsilon|m|}$ for any $\varepsilon>0$, we obtain
\begin{equation}\label{eq:omega-coeff-decay}
\left|\frac{d_m}{\Gamma(\beta_m)}\right|
\ll e^{-\lambda_\omega|m|}.
\end{equation}
Since $|t^{i\tau_m}|=1$ for $t>0$, this proves the absolute convergence
of \eqref{eq:def-omega}.

Define
\begin{equation}\label{eq:omega-complex-u}
W(u):=
\sum_{m\in\Z}
\frac{d_m}{\Gamma(\beta_m)}e^{i\tau_m u},
\qquad u\in\C.
\end{equation}
Put
$$
h_0:=\frac{\lambda_\omega\log(p^2)}{2\pi}.
$$
For every $0<h<h_0$ and $|\Im u|\le h$, we have
$$
\left|
\frac{d_m}{\Gamma(\beta_m)}e^{i\tau_m u}
\right|
\ll
\exp\left(
-\left(
\lambda_\omega-\frac{2\pi h}{\log(p^2)}
\right)|m|
\right).
$$
The series on the right is summable over $m\in\Z$. Hence
\eqref{eq:omega-complex-u} converges normally in $|\Im u|<h_0$, and
the Weierstrass theorem shows that $W$ is holomorphic there
\cite[Chapter~5, \S1.1, Theorem~1]{Ahlfors79}. Since
$$
W(u)=\omega(e^u)
\qquad (u\in\R),
$$
the function $u\mapsto\omega(e^u)$ is smooth.

Moreover,
$$
e^{i\tau_m\log(p^2)}=e^{2\pi im}=1,
$$
so termwise calculation gives
$$
W\bigl(u+\log(p^2)\bigr)=W(u).
$$
Thus $W$ is $\log(p^2)$-periodic, and
$$
\omega(p^2t)
=
W\bigl(\log t+\log(p^2)\bigr)
=
W(\log t)
=
\omega(t),
$$
which proves \eqref{eq:omega-periodic}.

We now prove \eqref{eq:S-coefficients}. Set
\begin{equation}\label{def:lambda_phi}
\lambda_\phi
:=
\frac{2\pi(\pi-\phi)}{\log(p^2)}.
\end{equation}
By \eqref{eq:angle-condition},
$$
\lambda_\phi<\lambda_d.
$$
Choose $A_1>0$ so that
$$
A_1(\lambda_d-\lambda_\phi)\ge1,
\qquad
A_1\lambda_\omega\ge1,
$$
and put
$$
M:=\lfloor A_1\log k\rfloor.
$$

For every $m$, the generalized binomial theorem gives
\begin{equation}\label{eq:exact-power-coefficient}
[z^k](1-z)^{-\beta_m}
=
\frac{\Gamma(k+\beta_m)}
{\Gamma(\beta_m)\Gamma(k+1)}.
\end{equation}
For $|m|\le M$, we have
$$
|\beta_m|\ll1+|m|\ll\log k.
$$
We claim that
\begin{equation}\label{eq:uniform-gamma-ratio}
\frac{\Gamma(k+\beta_m)}
{\Gamma(k+1)k^{\beta_m-1}}
=
1+
O\left(
\frac{(1+|\beta_m|)^2}{k}
\right),
\end{equation}
with an implied constant independent of $m$ in this range.

Let
$$
\psi(z):=\frac{\Gamma'(z)}{\Gamma(z)}.
$$
The digamma asymptotic
\cite[\href{https://dlmf.nist.gov/5.11.E2}{(5.11.2)}]{NIST:DLMF}
gives
$$
\psi(z)=\log z+O\left(\frac1z\right)
$$
in sectors bounded away from the negative real axis. If $w$ lies on
the segment from $1$ to $\beta_m$, then
$$
|w|\ll1+|\beta_m|\ll\log k.
$$
Consequently, $|k+w|\asymp k$, and
$$
\psi(k+w)-\log k
=
O\left(\frac{1+|w|}{k}\right).
$$
Integrating along this segment yields
$$
\begin{aligned}
&\log\Gamma(k+\beta_m)-\log\Gamma(k+1)
-(\beta_m-1)\log k \\
&\qquad=
\int_1^{\beta_m}
\bigl(\psi(k+w)-\log k\bigr)\,dw \\
&\qquad=
O\left(\frac{(1+|\beta_m|)^2}{k}\right).
\end{aligned}
$$
Since $(1+|\beta_m|)^2/k=O((\log k)^2/k)=o(1)$, exponentiation proves
\eqref{eq:uniform-gamma-ratio}.

Combining \eqref{eq:exact-power-coefficient} and
\eqref{eq:uniform-gamma-ratio}, we obtain
$$
\begin{aligned}
[z^k]\sum_{|m|\le M}d_m(1-z)^{-\beta_m}
&=
k^{\alpha-1}
\sum_{|m|\le M}
\frac{d_m}{\Gamma(\beta_m)}k^{i\tau_m} \\
&\quad+
O\left(
k^{\alpha-2}
\sum_{|m|\le M}
(1+|\beta_m|)^2
\left|\frac{d_m}{\Gamma(\beta_m)}\right|
\right).
\end{aligned}
$$
By \eqref{eq:omega-coeff-decay} and
$|\beta_m|\ll1+|m|$, the sum in the error term is bounded independently
of $M$. Hence
\begin{equation}\label{eq:central-mode-estimate}
[z^k]\sum_{|m|\le M}d_m(1-z)^{-\beta_m}
=
k^{\alpha-1}
\sum_{|m|\le M}
\frac{d_m}{\Gamma(\beta_m)}k^{i\tau_m}
+
O(k^{\alpha-2}).
\end{equation}

It remains to control the modes $|m|>M$. On the fixed $\Delta$-domain,
$$
|\arg(1-z)|\le\pi-\phi,
$$
and therefore
$$
|(1-z)^{-\beta_m}|
\le
|1-z|^{-\alpha}e^{\lambda_\phi|m|}.
$$
Together with \eqref{eq:dm-decay}, this gives
$$
|d_m(1-z)^{-\beta_m}|
\ll
|1-z|^{-\alpha}
e^{-(\lambda_d-\lambda_\phi)|m|}.
$$
Thus
\begin{equation}\label{eq:S-tail-bound}
\left|
\sum_{|m|>M}d_m(1-z)^{-\beta_m}
\right|
\ll
e^{-(\lambda_d-\lambda_\phi)M}|1-z|^{-\alpha},
\end{equation}
with an implied constant independent of $M$. The same estimate gives
normal convergence of the tail in the fixed $\Delta$-domain. Hence the
$O$-transfer theorem
\cite[Theorem~VI.3]{FlSe09} gives
$$
[z^k]\sum_{|m|>M}d_m(1-z)^{-\beta_m}
=
O\left(
e^{-(\lambda_d-\lambda_\phi)M}k^{\alpha-1}
\right),
$$
again with an implied constant independent of $M$.

Similarly, \eqref{eq:omega-coeff-decay} gives
$$
\left|
\sum_{|m|>M}
\frac{d_m}{\Gamma(\beta_m)}k^{i\tau_m}
\right|
\ll e^{-\lambda_\omega M}.
$$
Since $M=\lfloor A_1\log k\rfloor$, the choice of $A_1$ implies
$$
e^{-(\lambda_d-\lambda_\phi)M}\ll k^{-1},
\qquad
e^{-\lambda_\omega M}\ll k^{-1}.
$$
Therefore, replacing the two truncated sums in
\eqref{eq:central-mode-estimate} by their full sums changes each side
by $O(k^{\alpha-2})$. This proves
$$
[z^k]\mathcal S(z)
=
k^{\alpha-1}\omega(k)+O(k^{\alpha-2}).
$$
\end{proof}

\subsection[The secondary boundary point $z=-1$]{The secondary boundary point $z=-1$}

The Joukowski coordinate also gives a clean local model at the second boundary
point.  Near $z=-1$, define
\begin{equation}\label{eq:def-mu}
\mu(z):=\arcosh\!\left(-\frac1z\right),
\end{equation}
using the branch that tends to $0$ as $z\to-1$ inside the indentation.

\begin{lemma}\label{lem:mu-local} 
Near $z=-1$,
\begin{equation}\label{eq:q-mu}
q(z)=-e^{-\mu(z)}.
\end{equation}
After shrinking the indented neighborhood of $-1$, there is an analytic function $\Theta_-$, with $$\Theta_-(-1)=1,$$ such that
\begin{equation}\label{eq:mu-factor}
\mu(z)=\sqrt{2(1+z)}\,\Theta_-(z).
\end{equation}
Consequently,
\begin{equation}\label{eq:mu-size}
|\mu(z)|\asymp|1+z|^{1/2},
\end{equation}
and $\mu(z)$ remains in a sector $|\arg\mu|<\theta_-$ for some
$\theta_-<\pi/2$.
\end{lemma}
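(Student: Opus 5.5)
The plan mirrors the proof of Lemma~\ref{lem:eta-local}, now at the point $z=-1$, working through the relation between $z$ and $\mu$.

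\emph{Step 1: the identity \eqref{eq:q-mu}.} From \eqref{eq:Joukowski-inverse}, any $q$ with $q=-e^{-\mu}$ satisfies
\[
\frac{2q}{1+q^2}=\frac{-2e^{-\mu}}{1+e^{-2\mu}}=-\frac1{\cosh\mu}.
\]
So $-e^{-\mu(z)}$ solves the Joukowski relation exactly when $\cosh\mu(z)=-1/z$, which holds by definition \eqref{eq:def-mu}. The relation $z=2q/(1+q^2)$ has two solutions, $q$ and $1/q$. Lemma~\ref{lem:q-coordinate} singles out the solution with $|q|<1$. I will check the identity first on the real interval $-1<z<0$, where $\mu>0$ is real and $-e^{-\mu}\in(-1,0)$. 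Since $q(z)$ is also negative there, the two agree on that interval. The identity then extends by analytic continuation, provided we use the branch of $\mu$ with $\Re\mu>0$, which is consistent with $|q|<1$.

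\emph{Step 2: the factorization \eqref{eq:mu-factor}.} Write $-1/z=\cosh\mu$, so that
\[
z=-\frac1{\cosh\mu}=-1+\frac{w}{2}-\frac{5w^2}{24}+O(w^3),\qquad w=\mu^2,
\]
since $1/\cosh$ is even and therefore analytic in $w$. Hence $1+z=\frac w2\,(1+O(w))$. By the analytic inverse function theorem, $w=2(1+z)\,E(z)$ with $E$ analytic near $-1$ and $E(-1)=1$.

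Now set $\Theta_-:=E^{1/2}$, using the principal branch; this is analytic near $-1$ with $\Theta_-(-1)=1$. Then $\mu=\pm\sqrt{2(1+z)}\,\Theta_-(z)$, where $\sqrt{\cdot}$ is the principal root. The indented domain keeps $1+z$ away from the negative real axis, so this root is analytic there. The sign is $+$, as one sees on $-1<z<0$, where both sides are positive. This gives \eqref{eq:mu-factor}, and \eqref{eq:mu-size} follows immediately because $\Theta_-$ is close to $1$.

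\emph{Step 3: the sector bound.} On the indented domain $|\arg(1+z)|<\pi-\phi$, so $|\arg\sqrt{2(1+z)}|<(\pi-\phi)/2<\pi/2$. After shrinking the neighbourhood, $|\arg\Theta_-|$ is as small as we like. Hence $|\arg\mu|<\theta_-$ for any $\theta_-$ strictly between $(\pi-\phi)/2$ and $\pi/2$.

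The main obstacle is the branch bookkeeping in Steps 1 and 2. We must confirm that the chosen branch of $\mu$, the principal square root, and $q(z)$ are mutually consistent across the whole indented neighbourhood, not just on the real segment. I would handle this by defining $\mu$ directly as the right-hand side of \eqref{eq:mu-factor}. That function is analytic on the simply connected indented neighbourhood, so it is the branch tending to $0$. Its real part is positive, which makes $|-e^{-\mu}|<1$ and matches the branch selected in Lemma~\ref{lem:q-coordinate}.
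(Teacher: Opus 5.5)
Your proposal is correct and follows essentially the paper's route: the Joukowski relation identifies $q=-e^{-\mu}$, and the factorization $\mu=\sqrt{2(1+z)}\,\Theta_-(z)$ comes from locally inverting $\cosh\mu=-1/z$. You invert $-1/\cosh$ in the variable $w=\mu^2$, as in Lemma~\ref{lem:eta-local}, whereas the paper quotes $\arcosh(1+w)=\sqrt{2w}(1+O(w))$; this is the same fact. The sector bound then comes from $|\arg(1+z)|<\pi-\phi$. Your sign check on the real segment $-1<z<0$ also usefully fixes which of the two branches of $\arcosh$ tending to $0$ is meant (the one with $\Re\mu>0$, needed for $|q|<1$), a point the paper leaves implicit.
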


\begin{proof}
By \eqref{eq:Joukowski-inverse},
\[
-\frac1z=-\frac{q+q^{-1}}2.
\]
If $q=-e^{-\mu}$, the right-hand side is $\cosh\mu$.  The branch tending to
zero is therefore exactly \eqref{eq:def-mu}, which proves
\eqref{eq:q-mu}.  Since
\[
-\frac1z-1=(1+z)+O((1+z)^2),
\]
the local expansion $\arcosh(1+w)=\sqrt{2w}(1+O(w))$ gives
\eqref{eq:mu-factor}, and \eqref{eq:mu-size} follows immediately.  The sector assertion follows as $z$ is in the indented neighborhood of $-1$.
\end{proof}

Set
\begin{equation}\label{eq:def-Mp}
M_p(v):=
\frac{1-e^{-(p+1)v}}{(1+e^{-v})(1-e^{-pv})}.
\end{equation}
The singularity at $v=0$ is removable and
\begin{equation}\label{eq:Mp-zero}
M_p(v)=\frac{\Lambda}{p}+O(v),
\end{equation} 
where $\Lambda:={(p+1)}/{2}=1+h_p(0)$ and we will repeatedly use it.

By Lemma~\ref{lem:mu-local}, $q=-e^{-\mu}$ near $-1$.  Since $p$ is odd,
$q^{p^s}=-e^{-p^s\mu}$.  Substituting this into
\eqref{eq:As-product-form} gives
\[
A_s(z)=M_p(p^s\mu(z))
\]
and 
\begin{equation} \label{eq: Gz_and_Mpmiu}
    \mathcal{G}(z)=\prod_{s \ge 0} A_s(z)=\prod_{s \ge 0} M_p(p^s\mu(z)).
\end{equation}
Let
$$
J_p(v):=\prod_{s\geq0}M_p(p^sv).$$

\begin{lemma}\label{lem:Jp-bound}
Let $\gamma:=\log_p\frac{p}{\Lambda}=1-2\alpha$. Then, uniformly as $v\to0$ in every closed sector
$|\arg v|\leq\theta<\pi/2$,
\begin{equation}\label{eq:Jp-bound}
J_p(v)=O(|v|^\gamma).
\end{equation}
Moreover, along the positive real axis there are constants $c,C>0$ such that
\begin{equation}\label{eq:Jp-two-sided-real}
c v^\gamma\leq J_p(v)\leq C v^\gamma
\qquad(0<v\ll1).
\end{equation}
\end{lemma}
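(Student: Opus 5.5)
The plan is to relate $J_p$ directly to the product $H_p$ already analysed in Section~\ref{sec:mellin}, rather than redo a Mellin analysis. Comparing \eqref{eq:def-Mp} with \eqref{eq:one-plus-hp}, I will use the algebraic identity
\[
M_p(v)=\bigl(1+h_p(v)\bigr)\cdot\frac{(1-e^{-v})(1+e^{-pv})}{(1+e^{-v})(1-e^{-pv})}
=\bigl(1+h_p(v)\bigr)\,\frac{\tanh(v/2)}{\tanh(pv/2)}.
\]
Writing $\kappa(v):=\tanh(v/2)$, the ratio $\kappa(v)/\kappa(pv)$ telescopes along the geometric progression $v,pv,p^2v,\dots$. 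For $v$ in the sector, $\kappa(p^Nv)\to1$ (since $\Re(p^Nv)\to\infty$). Hence the finite products satisfy
\[
\prod_{s=0}^{N}M_p(p^sv)=\prod_{s=0}^{N}\bigl(1+h_p(p^sv)\bigr)\cdot\frac{\kappa(v)}{\kappa(p^{N+1}v)}.
\]
Letting $N\to\infty$ (both products converge by the estimate \eqref{eq:As-minus-one-OK}, or by Lemma~\ref{lem:g-basic}) gives the exact formula
\[
J_p(v)=\tanh(v/2)\,H_p(v).
\]

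Next I will insert the asymptotic \eqref{eq:Hp-asymptotic} of Proposition~\ref{prop:Sp-periodic}, valid uniformly as $v\to0$ in closed sectors $|\arg v|\le\theta<\pi/2$:
\[
H_p(v)=v^{-2\alpha}\widetilde\Omega(\log_p(1/v))\bigl(1+O(|v|^{2\sigma})\bigr).
\]
For such $v$, $\Im\log_p(1/v)=-\arg v/\log p$ is bounded by $\theta/\log p$. To bound $\widetilde\Omega$ there I need this to lie inside the holomorphy strip $|\Im u|<\pi/\log(p^2)=\pi/(2\log p)$, which holds since $\theta<\pi/2$. Since $\widetilde\Omega$ is periodic and continuous on the closed strip $|\Im u|\le\theta/\log p$, it is bounded there. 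Together with $\tanh(v/2)=v/2+O(v^3)$, this gives $J_p(v)=O(|v|^{1-2\alpha})=O(|v|^\gamma)$. This is \eqref{eq:Jp-bound}, since $\gamma=\log_p p-\log_p\Lambda=1-2\alpha$ by \eqref{eq:def-alpha} and \eqref{eq:def-a0}.

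For \eqref{eq:Jp-two-sided-real}, on $v>0$ the factor $\widetilde\Omega$ is bounded above and below by positive constants by Proposition~\ref{prop:F-periodic}. The factor $1+O(v^{2\sigma})$ lies in $[1/2,2]$ for small $v$, and $\tanh(v/2)\asymp v$, which yields the two-sided bound.

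The only step needing care is the telescoping identity and the interchange of the limit, and there I must ensure that $\kappa(p^{N+1}v)$ stays away from zero along the sector. It does: $\tanh w\to1$ as $\Re w\to\infty$ uniformly in $|\arg w|\le\theta$. Beyond this, the main point to check is that the sector condition on $v$ stays within the strip of holomorphy of $\widetilde\Phi$, which is exactly where $\theta<\pi/2$ is used.
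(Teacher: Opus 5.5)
Your proof is correct, and it takes a genuinely different route from the paper. The paper argues by direct scale splitting. It writes $M_p(w)=\frac{\Lambda}{p}\bigl(1+E(w)\bigr)$ with $E(w)=O(w)$, chooses $N$ with $p^N|v|\asymp1$, and factors $J_p(v)=(\Lambda/p)^N\prod_{s<N}\bigl(1+E(p^sv)\bigr)\,J_p(p^Nv)$. The middle product is bounded because $\sum_{s<N}p^s|v|\ll1$. The last factor is bounded (and on the real axis bounded below) because $p^Nv$ ranges over a compact piece of the sector. Finally $(\Lambda/p)^N=p^{-\gamma N}\asymp|v|^\gamma$.

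That argument is elementary and independent of Section~\ref{sec:mellin}, but it yields only bounds. Your telescoping identity $M_p(v)=\bigl(1+h_p(v)\bigr)\tanh(v/2)/\tanh(pv/2)$ gives $J_p(v)=\tanh(v/2)\,H_p(v)$ for $\Re v>0$. This transports Proposition~\ref{prop:Sp-periodic} to $z=-1$ and proves more: in the sector, $J_p(v)=\tfrac12 v^{\gamma}\,\widetilde\Omega\bigl(\log_p(1/v)\bigr)\bigl(1+O(|v|^{2\sigma})\bigr)$. That is a log-periodic asymptotic rather than an $O$-bound, and it would in principle let one identify the secondary contribution of order $(-1)^k k^{\alpha-3/2}$ instead of merely bounding it.

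The cost is dependence on the Mellin contour shift and on the strip of holomorphy of $\widetilde\Phi$. You correctly identify $\theta<\pi/2$ as exactly the condition keeping $\Im\log_p(1/v)$ inside that strip. A side benefit is that complex $v$ is handled automatically. The paper's bound on $J_p(p^Nv)$ over a complex annular sector needs continuity of $J_p$ off the real axis, which it only sketches via $v>0$.

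One small citation fix: take convergence of $\prod_s\bigl(1+h_p(p^sv)\bigr)$ from the large-$|y|$ estimate in Lemma~\ref{lem:g-basic}, not from \eqref{eq:As-minus-one-OK}, which is stated in the $z$-variable. Convergence of the $M_p$-product then follows from your finite identity.
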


\begin{proof}
Choose $v_0>0$ so that in the chosen sector and for $|w|\leq v_0$,
\[
M_p(w)=\frac{\Lambda}{p}(1+E(w)),
\qquad E(w)=O(w).
\]
For small $v$, choose $N$ so that
$v_0/p< p^N|v| \le v_0$. Splitting at scale $N$ gives
\[
J_p(v)=
\left(\frac{\Lambda}{p}\right)^N
\prod_{s=0}^{N-1}(1+E(p^sv))\,J_p(p^Nv).
\]
The middle product is bounded because
$\sum_{s<N}|E(p^sv)|\ll\sum_{s<N}p^s|v|\ll1$. Since $J_p(\mu(z))=\mathcal{G}(z)$ where $\mu(z)=\arcosh(-1/z)$, and $\cosh v>1$ for any $v>0$,  $J_p(v)$ is continuous on $v>0$ by Proposition \ref{prop:global-continuation}. Therefore, the final product is uniformly
bounded because $p^Nv$ stays in a fixed compact annular sector. Since $\Lambda/p=p^{-\gamma}$ and
$p^N|v|\asymp1$, it follows that $(\Lambda/p)^N \asymp |v|^\gamma.$ Hence, we have  \eqref{eq:Jp-bound}.

To prove \eqref{eq:Jp-two-sided-real}, the upper bound already follows from
\eqref{eq:Jp-bound}, so it remains to find a positive lower bound.

For positive real $v$, all factors $M_p(p^s v)$ are positive. After decreasing
$v_0$ if necessary, the product

$$
\prod_{s<N}(1+E(p^s v))
$$

is bounded below by a positive constant.

As $p^N v$ lies in the compact interval $[v_0/p,v_0]$. The function $J_p$
is continuous and strictly positive on this interval, so it has a positive
minimum there. Hence $J_p(p^N v)$ is bounded below by a positive constant. Combining these lower bounds with $(\Lambda/p)^N \asymp |v|^\gamma$ gives $J_p(v)\gg v^\gamma.$ 

Together with the upper bound from \eqref{eq:Jp-bound}, this proves
\eqref{eq:Jp-two-sided-real}.

\end{proof}

\begin{proposition}\label{prop:minus-one-bound}

As $z\to-1$ inside $\Delta_{\phi,\rho}$,
\begin{equation}\label{eq:B-at-minus-one}
\mathcal B(z)-2=O(|1+z|^{1/2-\alpha}).
\end{equation}
Along the real interval $z\downarrow-1$ from the right,
\begin{equation}\label{eq:B-minus-one-two-sided}
|\mathcal B(z)-2|\asymp(1+z)^{1/2-\alpha}.
\end{equation}
In particular, $z=-1$ is a boundary singularity.  Consequently,
\begin{equation}\label{eq:B-minus-S-minus-one}
\mathcal B(z)-\mathcal S(z)
 =2-\mathcal S(-1)+O(|1+z|^{1/2-\alpha}).
\end{equation}
\end{proposition}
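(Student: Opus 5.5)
We use the representation \eqref{eq: Gz_and_Mpmiu}: near $z=-1$ inside $\Delta_{\phi,\rho}$ we have $\mathcal G(z)=J_p(\mu(z))$. Lemma~\ref{lem:mu-local} puts $\mu(z)$ in a fixed closed sector $|\arg\mu|\le\theta_-<\pi/2$, with $|\mu(z)|\asymp|1+z|^{1/2}$. Lemma~\ref{lem:Jp-bound} then gives
\[
\mathcal G(z)=O(|\mu(z)|^{\gamma})=O(|1+z|^{\gamma/2}),\qquad \gamma=1-2\alpha,
\]
that is, $\mathcal G(z)=O(|1+z|^{1/2-\alpha})$. Note that $\alpha<1/2$ because $(p+1)/2<p$, so $\gamma>0$ and $\mathcal G(z)\to0$ as $z\to-1$.

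Next we plug this into \eqref{eq:B-from-G}:
\[
\mathcal B(z)=\frac2z\bigl(\mathcal G(z)-1\bigr)=-\frac2z+\frac2z\,\mathcal G(z).
\]
Since $-2/z=2+O(|1+z|)$ and $2/z$ is bounded near $-1$, we get $\mathcal B(z)-2=O(|1+z|)+O(|1+z|^{1/2-\alpha})$. Because $1/2-\alpha<1$, the first term is absorbed, which proves \eqref{eq:B-at-minus-one}.

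For the two-sided real estimate, take $z\in(-1,0)$. Then $-1/z>1$, so $\mu(z)>0$ is real, and $\mu(z)\asymp(1+z)^{1/2}$. The bound \eqref{eq:Jp-two-sided-real} gives $\mathcal G(z)\asymp(1+z)^{\gamma/2}$ with $\mathcal G(z)>0$. Hence
\[
\mathcal B(z)-2=\frac2z\,\mathcal G(z)+O(1+z)
\]
has modulus $\asymp(1+z)^{1/2-\alpha}$, since the $O(1+z)$ term is of strictly smaller order. The only point needing care is that $1/2-\alpha$ is strictly between $0$ and $1$; this is exactly $0<\alpha<1/2$, which holds for odd $p$.

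To see that $z=-1$ is a genuine singularity, note that $0<1/2-\alpha<1$ is not an integer. If $\mathcal B$ were analytic at $-1$, then $\mathcal B(z)-2$ would behave like $c(1+z)^j$ for some integer $j\ge1$, or vanish identically, along the real approach. This contradicts \eqref{eq:B-minus-one-two-sided}.

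Finally, $\mathcal S$ is analytic across $z=-1$, as noted after \eqref{eq:def-singular-model}. So $\mathcal S(z)=\mathcal S(-1)+O(|1+z|)$, and subtracting this from \eqref{eq:B-at-minus-one} gives \eqref{eq:B-minus-S-minus-one}, again absorbing $O(|1+z|)$ into $O(|1+z|^{1/2-\alpha})$.

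The main subtlety is making sure Lemma~\ref{lem:Jp-bound} applies uniformly along the indented approach. This needs the sector statement for $\mu$ in Lemma~\ref{lem:mu-local} and the identification $A_s(z)=M_p(p^s\mu(z))$, which uses that $p$ is odd. Everything else is bookkeeping of exponents.
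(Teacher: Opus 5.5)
Your proposal is correct and follows essentially the same route as the paper. You identify $\mathcal G=J_p(\mu(z))$ and apply Lemma~\ref{lem:Jp-bound} with $|\mu|\asymp|1+z|^{1/2}$. You write $\mathcal B-2=\frac2z\mathcal G-\frac{2(1+z)}z$ and absorb the $O(|1+z|)$ term using $0<\tfrac12-\alpha<1$. You then use the real two-sided bound to show $z=-1$ is singular, and subtract the analytic $\mathcal S$. Two of your steps are ones the paper calls immediate: the check that the $O(1+z)$ term is negligible for the lower bound, and the argument that analyticity at $-1$ would force $\mathcal B-2$ to vanish to integer order.
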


\begin{proof}

The equation \eqref{eq: Gz_and_Mpmiu} and Lemma~\ref{lem:Jp-bound}
give
\[
\mathcal G(z)=J_p(\mu(z))
 =O(|\mu(z)|^{1-2\alpha})
 =O(|1+z|^{1/2-\alpha}).
\]
First, observe that that as $0<\frac{1}{2}-\alpha<1$, we may replace $O(|1+z|)$ by $O(|1+z|^{1/2-\alpha})$ near $z=-1.$ Using \eqref{eq:B-via-G},
\[
\mathcal B(z)-2
 =\frac2z\mathcal G(z)-\frac{2(1+z)}z,
\]
which proves \eqref{eq:B-at-minus-one}.  For real $-1<z<0$, the variable
$\mu(z)$ is positive and \eqref{eq:B-minus-one-two-sided} follows immediately from 
Lemma~\ref{lem:Jp-bound}. The equation $\eqref{eq:B-minus-one-two-sided}$ shows that analytic continuation across
$-1$ is impossible. Finally, $\eqref{eq:B-minus-S-minus-one}$ follows from that $S(z)$ is analytic at $-1$ and replacing $O(|1+z|)$ by $O(|1+z|^{1/2-\alpha})$.
\end{proof}

\subsection{Two-point transfer and the asymptotic formula}

We use the following standard two-point form of singularity analysis.  It is
the usual composite-contour argument applied to the two indentations; see
\cite[Chapter~VI]{FlSe09} and \cite{FlOd90}.

\begin{lemma}[Two-point $O$-transfer]\label{lem:two-point-transfer}
Let $H$ be analytic in $\Delta_{\phi,\rho}$ and suppose that
\[
H(z)=O(|1-z|^{-\lambda_+})
\quad(z\to1),
\qquad
H(z)=O(|1+z|^{-\lambda_-})
\quad(z\to-1),
\]
where $\lambda_+,\lambda_-\in\R$.  Then
\begin{equation}\label{eq:two-point-transfer}
[z^k]H(z)
 =O(k^{\lambda_+-1})+O(k^{\lambda_--1}).
\end{equation}
\end{lemma}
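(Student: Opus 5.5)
The plan is the standard Cauchy-integral contour argument of Flajolet--Odlyzko, run with one indentation at each singular point. Write
\[
[z^k]H(z)=\frac1{2\pi i}\oint_\Gamma H(z)z^{-k-1}\,\dd z ,
\]
and take for $\Gamma$ a contour inside $\Delta_{\phi,\rho}$ adapted to $k$. Fix $0<\phi'<\phi$ with $\phi'$ slightly larger than $\phi$'s complement so that the rays used stay inside the domain; it suffices to take rays at angle between $\phi$ and $\pi/2$ from the real axis. Also fix $0<r<\rho$ small.

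The contour $\Gamma$ consists of four pieces. Near $z=1$ there is a small circular arc $\gamma_1^+$ of radius $1/k$ around $1$ that avoids the slit. From its endpoints run two line segments $\gamma_2^+$, making angle $\phi'$ with the positive real axis as seen from $1$, out to the circle $|z|=1+r$ (more precisely, up to a fixed small distance $\eta_0$ from $1$). Near $z=-1$ we take the mirror-image arc $\gamma_1^-$ of radius $1/k$ and segments $\gamma_2^-$. Finally, two arcs $\gamma_3$ of a circle of radius $R>1$ with $R<1+\rho$ connect the four segments. By Cauchy's theorem, analyticity of $H$ in $\Delta_{\phi,\rho}$ lets us deform the small circle into $\Gamma$.

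The pieces are estimated as follows.
\begin{itemize}
\item[(i)] On $\gamma_3$, $H$ is bounded, since the arcs are compact and stay a fixed distance from $\pm1$. Hence this piece contributes $O(R^{-k})$, which is exponentially small.
\item[(ii)] On $\gamma_1^+$ we have $|H|\ll k^{\lambda_+}$ and $|z|^{-k}\le (1-1/k)^{-k}\ll 1$, while the arc has length $\asymp 1/k$. The contribution is $O(k^{\lambda_+-1})$.
\item[(iii)] On $\gamma_2^+$, parametrize $z=1+\frac{t}{k}e^{i\vartheta}$ with $|\vartheta|=\phi'$ and $1\le t\le k\eta_0$. Because $\cos\phi'>0$, we get $|z|^{-k}\le \exp(-c\,t)$ for some $c>0$, uniformly for $t\le k\eta_0$ once $\eta_0$ is small. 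Combined with $|H|\ll (t/k)^{-\lambda_+}$ and $|\dd z|=\dd t/k$, the contribution is at most
\[
k^{\lambda_+-1}\int_1^\infty t^{-\lambda_+}e^{-ct}\,\dd t=O(k^{\lambda_+-1}).
\]
This holds for any real $\lambda_+$.
\end{itemize}
The pieces near $-1$ are handled the same way, by the substitution $z\mapsto -z$. This gives $O(k^{\lambda_--1})$, since $|z^{-k}|=|(-z)^{-k}|$. The remaining parts of the segments, between distance $\eta_0$ and the outer circle, lie a fixed distance from $\pm1$ with $|z|\ge 1+c'$, so they are exponentially small as well. Summing all pieces gives the claimed bound.

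The only delicate step is the geometric check that the segments and arcs lie in $\Delta_{\phi,\rho}$, together with the uniform bound $|1+\frac tk e^{i\vartheta}|^{-k}\le e^{-ct}$ for $t$ up to $k\eta_0$. This follows from $\log|1+w|\ge \Re w-|w|^2$ for small $|w|$, choosing $\eta_0$ so that $|w|\le \tfrac12\cos\phi'$. The bounds on $H$ are only asymptotic, so we also need them to be uniform on a fixed neighbourhood. This is automatic: after shrinking $\eta_0$, the $O$-hypotheses hold on the $\eta_0$-neighbourhoods of $\pm1$ intersected with $\Delta_{\phi,\rho}$.
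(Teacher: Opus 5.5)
Your argument is correct, and it is the composite Flajolet--Odlyzko contour argument that the paper invokes by citing \cite[Chapter~VI]{FlSe09} rather than writing it out: one indentation of radius $1/k$ at each of $z=\pm1$, segments bounded via the $t$-integral (valid for any real $\lambda_\pm$), and exponentially small outer pieces. The only slip is the phrase ``Fix $0<\phi'<\phi$'': the rays must satisfy $\phi<\phi'<\pi/2$ to lie in $\Delta_{\phi,\rho}$, which is what you actually use afterwards.
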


\begin{proof} 
See \cite[Theorem~VI.6]{FlSe09}.
\end{proof}

The difference $\mathcal B-\mathcal S$ has regular constant boundary values at
both indentations.  Those constants are not part of the singular asymptotic,
but a raw $O$-transfer estimate would treat them as $O(1)$ terms, which would make the estimate not sharp enough.  We remove
them with a linear interpolation polynomial, which has no effect on
coefficients of degree at least $2$.

\begin{proposition}\label{prop:additive-asymptotic}
We have 
\begin{equation}\label{eq:additive-asymptotic}
2^{-k}b_k
 =k^{\alpha-1}\omega(k)
  +O(k^{\alpha-3/2}).
\end{equation}
Equivalently, with $\delta=1-\alpha$,
\begin{equation}\label{eq:additive-delta}
2^{-k}b_k
 =k^{-\delta}\omega(k)
  +O(k^{-\delta-1/2}).
\end{equation}
\end{proposition}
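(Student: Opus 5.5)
We write $2^{-k}b_k=[z^k]\mathcal S(z)+[z^k]\bigl(\mathcal B(z)-\mathcal S(z)\bigr)$. Lemma~\ref{lem:periodic-transfer} already gives $[z^k]\mathcal S=k^{\alpha-1}\omega(k)+O(k^{\alpha-2})$, and $k^{\alpha-2}=O(k^{\alpha-3/2})$. So it suffices to show
\[
[z^k]\bigl(\mathcal B(z)-\mathcal S(z)\bigr)=O(k^{\alpha-3/2}).
\]

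To remove the regular constant boundary values, let $P(z)$ be the unique polynomial of degree at most $1$ with $P(1)=-2$ and $P(-1)=2-\mathcal S(-1)$. Set
\[
H(z):=\mathcal B(z)-\mathcal S(z)-P(z).
\]
Then $H$ is analytic in $\Delta_{\phi,\rho}$. Indeed, $\mathcal B$ is analytic there by Proposition~\ref{prop:global-continuation}, $\mathcal S$ converges normally there by the angle condition \eqref{eq:angle-condition}, and $P$ is a polynomial. Moreover $[z^k]H=[z^k](\mathcal B-\mathcal S)$ for $k\ge2$.

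Next we read off the local behavior of $H$ at the two singular points. Near $z=1$, Proposition~\ref{prop:B-minus-S-at-one} gives $\mathcal B-\mathcal S=-2+O(|1-z|^{-\alpha+\sigma})$ for any $0<\sigma<1$. Since $P(z)+2=O(|1-z|)$, we get $H(z)=O(|1-z|^{-\alpha+\sigma})$. Near $z=-1$, \eqref{eq:B-minus-S-minus-one} together with $P(z)-P(-1)=O(|1+z|)$ gives $H(z)=O(|1+z|^{1/2-\alpha})$; here we use $0<1/2-\alpha<1$, as noted in the proof of Proposition~\ref{prop:minus-one-bound}. Lemma~\ref{lem:two-point-transfer}, applied with $\lambda_+=\alpha-\sigma$ and $\lambda_-=\alpha-1/2$, then yields
\[
[z^k]H=O(k^{\alpha-\sigma-1})+O(k^{\alpha-3/2}).
\]
Choosing any $\sigma\in[1/2,1)$, for instance $\sigma=1/2$, makes the first term $O(k^{\alpha-3/2})$ as well, which proves \eqref{eq:additive-asymptotic}. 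Substituting $\delta=1-\alpha$ gives \eqref{eq:additive-delta}.

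The main point requiring care is whether Lemma~\ref{lem:two-point-transfer} applies as stated. Two things need checking:
\begin{itemize}[leftmargin=1.5em]
\item The local bounds hold on the \emph{same} indented domain. The estimate for $\mathcal S$ near $z=-1$ is uniform because $\mathcal S$ is analytic across $-1$; this follows from the normal convergence of its series near $-1$, since $|1-z|$ is bounded below there.
\item $H$ is bounded on the compact part of $\Delta_{\phi,\rho}$ away from $\pm1$, which is exactly what the composite-contour argument of \cite[Theorem~VI.6]{FlSe09} needs.
\end{itemize}
One must also check that the $z=1$ estimate of Proposition~\ref{prop:B-minus-S-at-one} remains valid for $\sigma$ close to $1$ without loss of uniformity, but $\sigma=1/2$ already suffices.
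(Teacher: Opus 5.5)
Your proposal is correct and follows the paper's proof: the same linear interpolation polynomial $P$ with $P(1)=-2$ and $P(-1)=2-\mathcal S(-1)$, the local bounds from Propositions~\ref{prop:B-minus-S-at-one} and \ref{prop:minus-one-bound}, the two-point transfer of Lemma~\ref{lem:two-point-transfer}, and the choice $\sigma=1/2$. You are slightly more explicit than the paper in noting that $[z^k]P=0$ for $k\ge2$, and that the main term comes from Lemma~\ref{lem:periodic-transfer}, whose $O(k^{\alpha-2})$ error is absorbed into $O(k^{\alpha-3/2})$.
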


\begin{proof}
Let $P$ be the linear polynomial with the same two boundary constants as
$\mathcal B-\mathcal S$:
\begin{equation}\label{eq:def-P}
P(z):=-2\frac{1+z}{2}
 +\bigl(2-\mathcal S(-1)\bigr)\frac{1-z}{2}.
\end{equation}
Thus $P(1)=-2$ and $P(-1)=2-\mathcal S(-1)$.  Put
\[
R(z):=\mathcal B(z)-\mathcal S(z)-P(z).
\]
For any $\sigma<1$,  Propositions~\ref{prop:B-minus-S-at-one} and \ref{prop:minus-one-bound} give
\[
R(z)=O(|1-z|^{\sigma -\alpha})
\quad(z\to1),
\qquad
R(z)=O(|1+z|^{1/2-\alpha})
\quad(z\to-1).
\]
By Lemma~\ref{lem:two-point-transfer},
\[[z^k]R(z)
 =O(k^{\alpha-\sigma-1})+O(k^{\alpha-3/2}).
\]
Taking $\sigma=1/2$, then the proposition follows.

\end{proof}

\begin{remark}\label{rem:half-barrier}
In the two-point transfer argument, the
exponent, $\alpha-3/2$, of the error term comes from the singularity at $z=-1$.
\end{remark}

\section{A positive theta kernel from boundary heat flux}\label{sec:theta}

The Fourier expansion in Lemma~\ref{lem:periodic-transfer} defines the
coefficient amplitude, but it does not make its sign transparent.  We now
construct a positive real-variable model.  The key observation is that the
one-scale factor $f_p$ is the Laplace transform of a boundary heat-flux kernel.

\subsection{Definition and basic properties}

Let $K_p(x,y,s)$ be the Dirichlet heat kernel on the interval $(0,p)$:
\begin{equation}\label{eq:heat-kernel-spectral}
K_p(x,y,s)=\frac2p\sum_{n\geq1}
 e^{-\pi^2n^2s/p^2}
 \sin\!\left(\frac{\pi nx}{p}\right)
 \sin\!\left(\frac{\pi ny}{p}\right).
\end{equation}
For $t>0$, define 
\begin{equation}\label{eq:def-varphi}
\varphi_p(t):=\frac14\sum_{a=1}^{p-1}
(\partial_yK_p)(a,0,t/4),
\end{equation}
and set $\varphi_p(t)=0$ for $t\leq0$.

The normalisations are chosen so that the relation between $f_p$ and $\phi_p$ is cleaner (see Proposition~\ref{prop:laplace-varphi}).
\begin{lemma}\label{lem:finite-sine-sum}
For every integer $n$,
\begin{equation}\label{eq:finite-sine-sum}
\sum_{a=1}^{p-1}\sin\!\left(\frac{\pi an}{p}\right)
 =
\begin{cases}
\displaystyle \cot\!\left(\frac{\pi n}{2p}\right),
 & n\text{ odd and }p\nmid n,\\[2mm]
0,&\text{otherwise }.
\end{cases}
\end{equation}
\end{lemma}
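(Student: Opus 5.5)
We evaluate the sum in closed form by writing the sines as imaginary parts of a geometric series. Put $\zeta:=e^{i\pi n/p}$, so that
\[
\sum_{a=1}^{p-1}\sin\!\left(\frac{\pi an}{p}\right)=\Im\sum_{a=1}^{p-1}\zeta^a .
\]
The argument splits according to whether $\zeta=1$, and then according to the parity of $n$, which fixes the sign of $\zeta^p=e^{i\pi n}=(-1)^n$.

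\emph{Case $\zeta=1$.} This happens exactly when $2p\mid n$. Then every term $\sin(\pi an/p)$ is the sine of an integer multiple of $2\pi$, so each term vanishes and the sum is $0$. This lands in the ``otherwise'' branch, since $p\mid n$.

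\emph{Case $\zeta\neq1$.} The geometric series gives
\[
\sum_{a=1}^{p-1}\zeta^a=\frac{\zeta-\zeta^p}{1-\zeta}=\frac{\zeta-(-1)^n}{1-\zeta}.
\]

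\emph{Subcase $n$ even.} Here $\zeta^p=1$, so the expression equals $\frac{\zeta-1}{1-\zeta}=-1$. Its imaginary part is $0$.

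\emph{Subcase $n$ odd.} Here $\zeta^p=-1$. If moreover $p\mid n$, then $n/p$ is an odd integer, so every term $\sin(\pi a\cdot n/p)$ is the sine of an integer multiple of $\pi$ and vanishes. If instead $p\nmid n$, the expression is $\frac{1+\zeta}{1-\zeta}$. Write $\theta=\pi n/p$ and use the half-angle factorizations
\[
1+e^{i\theta}=2\cos(\theta/2)\,e^{i\theta/2},\qquad 1-e^{i\theta}=-2i\sin(\theta/2)\,e^{i\theta/2}.
\]
Their quotient is
\[
\frac{1+\zeta}{1-\zeta}=i\cot\!\left(\frac{\theta}{2}\right)=i\cot\!\left(\frac{\pi n}{2p}\right),
\]
whose imaginary part is $\cot(\pi n/(2p))$. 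The cotangent is well defined: $\sin(\pi n/(2p))=0$ would require $2p\mid n$, which is impossible for odd $n$.

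The two vanishing subcases ($n$ even, and $n$ odd with $p\mid n$) are exactly the ``otherwise'' branch of \eqref{eq:finite-sine-sum}, and the remaining subcase gives the cotangent branch. There is no real obstacle here; the only points needing care are keeping the case distinctions aligned with the statement and choosing the half-angle factorization that produces the cotangent cleanly.
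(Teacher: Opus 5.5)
Your proof is correct and is essentially the paper's argument. The paper also sums the geometric series in closed form, but in real form, $\sum_{a=1}^{p-1}\sin(ax)=\sin((p-1)x/2)\sin(px/2)/\sin(x/2)$, and then tracks signs via $(-1)^{(n-1)/2}$; your complex version with the half-angle factorization produces $i\cot(\pi n/(2p))$ directly, and your separate treatment of $\zeta=1$ (i.e.\ $2p\mid n$) explicitly covers the case where that real closed form is undefined, which the paper leaves implicit.
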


\begin{proof}
For $x\notin2\pi\Z$,
\[
\sum_{a=1}^{p-1}\sin(ax)
 =\frac{\sin((p-1)x/2)\sin(px/2)}{\sin(x/2)}.
\]
Substitute $x=\pi n/p$.  The sum vanishes when $n$ is even or $p\mid n$.
If $n$ is odd and $p\nmid n$, then
\[
\sin\!\left(\frac{(p-1)\pi n}{2p}\right)
 =(-1)^{(n-1)/2}\cos\!\left(\frac{\pi n}{2p}\right),
\]
while $\sin(\pi n/2)=(-1)^{(n-1)/2}$.  Their product divided by
$\sin(\pi n/(2p))$ gives the cotangent in \eqref{eq:finite-sine-sum}.
\end{proof}

\begin{proposition}\label{prop:varphi-properties}
The function $\varphi_p$ is strictly positive on $(0,\infty)$, and its
extension by zero is a Schwartz function on $\R$ supported in $[0,\infty)$.
It has the spectral expansions
\begin{align}
\varphi_p(t)
 &=\frac{\pi}{2p^2}\sum_{a=1}^{p-1}\sum_{n\geq1}
 n\sin\!\left(\frac{\pi an}{p}\right)
 e^{-\pi^2n^2t/(4p^2)},
 \label{eq:varphi-spectral}\\
 &=\frac{\pi}{2p^2}
 \sum_{\substack{n\geq1\\ n\text{ odd},\ p\nmid n}}
 n\cot\!\left(\frac{\pi n}{2p}\right)
 e^{-\pi^2n^2t/(4p^2)}.
 \label{eq:varphi-cot}
\end{align}
It also has the image expansion
\begin{equation}\label{eq:varphi-image}
\varphi_p(t)=\frac1{\sqrt\pi\,t^{3/2}}
\sum_{a=1}^{p-1}\sum_{m\in\Z}
(a+2mp)e^{-(a+2mp)^2/t}.
\end{equation}
\end{proposition}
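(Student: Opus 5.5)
The plan is to start from the spectral series \eqref{eq:heat-kernel-spectral} and differentiate it termwise in $y$ at $y=0$. For fixed $s>0$ the factor $e^{-\pi^2n^2s/p^2}$ makes the series of $y$-derivatives converge uniformly, and we get
\[
(\partial_yK_p)(x,0,s)=\frac{2\pi}{p^2}\sum_{n\ge1}n\,e^{-\pi^2n^2s/p^2}\sin\!\left(\frac{\pi nx}{p}\right).
\]
Setting $x=a$, $s=t/4$, summing over $a$ and multiplying by $\frac14$ gives \eqref{eq:varphi-spectral}. Interchanging the finite $a$-sum with the $n$-sum and applying Lemma~\ref{lem:finite-sine-sum} gives \eqref{eq:varphi-cot}.

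For the image expansion \eqref{eq:varphi-image} we use the method of images. Write
\[
K_p(x,y,s)=\sum_{m\in\Z}\bigl(G(x-y+2mp,s)-G(x+y+2mp,s)\bigr),\qquad G(z,s)=(4\pi s)^{-1/2}e^{-z^2/(4s)} .
\]
This equality can be checked either by uniqueness of the Dirichlet heat kernel or directly, by Poisson summation applied to the sine series after writing the product of sines as a difference of cosines. Differentiating in $y$ at $y=0$ gives $\partial_yK_p(x,0,s)=2\sum_m \frac{x+2mp}{2s}G(x+2mp,s)$. With $s=t/4$ this becomes $\frac{4}{\sqrt{\pi}t^{3/2}}\sum_m (x+2mp)e^{-(x+2mp)^2/t}$. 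Multiplying by $\frac14$ and summing over $a$ yields \eqref{eq:varphi-image}.

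For the Schwartz property, the two expansions control opposite ends. As $t\to\infty$, \eqref{eq:varphi-cot} shows that $\varphi_p$ and all its derivatives decay like $e^{-\pi^2t/(4p^2)}$. As $t\downarrow0$, the terms of \eqref{eq:varphi-image} all satisfy $|a+2mp|\ge1$, since $1\le a\le p-1$. So every term, and every $t$-derivative of every term, is $O(t^{-N}e^{-c/t})$ with $c=1$, uniformly summable in $m$. Hence all derivatives tend to $0$ as $t\downarrow 0$, and the extension by zero is $C^\infty$ at the origin and Schwartz on $\R$.

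Strict positivity is the main point. I would first try to get it from the image sum by pairing the $m$ and $-m$ terms; for $m\ge1$ the pair is
\[
(a+2mp)e^{-(a+2mp)^2/t}+(a-2mp)e^{-(a-2mp)^2/t} .
\]
Since $|a-2mp|>a+2(m-1)p$, and the function $r\mapsto re^{-r^2/t}$ is not monotone, the naive pairing may fail. A cleaner route is the one indicated in the introduction. The function $u(y,s)=\sum_a K_p(a,y,s)$ solves the Dirichlet heat equation on $(0,p)$ with nonnegative, nonzero initial data $\sum_a\delta_a$. By the strong maximum principle $u>0$ in the interior for $s>0$, and $u$ vanishes at $y=0$. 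The parabolic Hopf lemma then gives $\partial_yu(0,s)>0$. The symmetry $K_p(x,y,s)=K_p(y,x,s)$ identifies this derivative with $\sum_a(\partial_yK_p)(a,0,s)$. Checking the regularity hypotheses of Hopf's lemma (smooth up to the boundary for $s>0$) is routine from the spectral series, so I expect this step to be the only real obstacle, and the Hopf argument resolves it.
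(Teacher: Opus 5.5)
Your proposal is correct and follows the paper's proof essentially step for step: termwise differentiation of the spectral series together with Lemma~\ref{lem:finite-sine-sum} for \eqref{eq:varphi-spectral}--\eqref{eq:varphi-cot}, the Poisson-summation image formula for \eqref{eq:varphi-image}, the spectral expansion for decay as $t\to\infty$ and the image expansion for flatness as $t\downarrow0$, and the parabolic Hopf lemma for strict positivity. The only difference is cosmetic: you apply Hopf to the summed solution $u=\sum_aK_p(a,\cdot,\cdot)$, whereas the paper applies it to each $K_p(a,\cdot,\cdot)$ separately; your appeal to the symmetry of $K_p$ is also unnecessary, since $\partial_yu(0,s)=\sum_a(\partial_yK_p)(a,0,s)$ holds directly.
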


\begin{proof}
Differentiating \eqref{eq:heat-kernel-spectral} at $y=0$ and putting $s=t/4$
gives \eqref{eq:varphi-spectral}.  Lemma~\ref{lem:finite-sine-sum} then gives
\eqref{eq:varphi-cot}.

For positivity, fix $a\in{1,\dots,p-1}$ and $s>0$.  As a function of $(y,\tau)$, the kernel $K_p(a,y,\tau)$ is a positive solution of the heat equation in $(0,p)\times(0,\infty)$ and vanishes on the lateral boundary $y=0$.  The parabolic Hopf boundary point lemma (see e.g.  \cite[Lemma 2.6]{Lieberman96}) , applied at $(0,s)$, therefore gives

$$(\partial_yK_p)(a,0,s)>0,$$
since the positive $y$-direction is the inward normal direction at the left endpoint.  Thus every summand in \eqref{eq:def-varphi} is strictly positive.

The method-of-images formula obtained via Poisson summation of the standard expansion of $K_p(x,y,s)$ gives us
\[
K_p(x,y,s)=\frac1{\sqrt{4\pi s}}
\sum_{m\in\Z}
\left(
 e^{-(x-y+2mp)^2/(4s)}
 -e^{-(x+y+2mp)^2/(4s)}
\right).
\]
Differentiate at $y=0$, set $x=a$ and $s=t/4$, and sum over $a$.  This yields
\eqref{eq:varphi-image}.

The spectral expansion shows, after termwise differentiation, that $\varphi_p$ and all of its derivatives decay exponentially as $t\to\infty$.

For $t\downarrow0$, use the image expansion. Since $1\le a\le p-1$, one has

$$|a+2mp|\ge1\qquad(m\in\mathbb Z).$$

After $q$ differentiations, each summand is a finite linear combination of terms of the form

$$t^{-A}(a+2mp)^B e^{-(a+2mp)^2/t}.$$

The Gaussian decay in $m$, together with the standard estimate

$$e^{-c/t}=O(t^N)\qquad(t\downarrow0)$$
for every $c>0$ and $N\ge0$, therefore gives

$$\varphi_p^{(q)}(t)=O(t^N)\qquad(t\downarrow0)$$
for all $q,N\ge0$. Thus every derivative tends to $0$ to infinite order at the origin, so the extension by zero is smooth there. Combined with the exponential decay of all derivatives as $t\to\infty$, this shows that the extension by zero is a Schwartz function on $\mathbb R$.

\end{proof}

\subsection{The Laplace transform}

The relation between $\varphi_p$ and $f_p$ is more transparent through the
Dirichlet resolvent rather than by termwise integration of the theta series because
the Dirichlet resolvent kernel for $\lambda-\partial_y^2$ on $(0,p)$ is built from $\sinh(\sqrt\lambda\,y)$
and $\sinh(\sqrt\lambda\,(p-y))$, and that gives the hyperbolic sines defining $h_p$.


We recall the notation, $f_p(x):=h_p(2\sqrt{x})$ where $h_p(y):=\sum_{j=1}^{p-1}\frac{\sinh(jy)}{\sinh(py)}$.
\begin{remark}
In this section $x$ denotes the Laplace transform variable, always with
$\Re x>0$. The first space slot of $K_p$ in \eqref{eq:heat-kernel-spectral} is
occupied throughout by an integer $a\in\{1,\dots,p-1\}$, so the two uses of $x$
never occur together.
\end{remark} 

\begin{proposition}\label{prop:laplace-varphi}
For $\Re x>0$,
\begin{equation}\label{eq:laplace-varphi}
f_p(x)=\int_0^\infty e^{-xt}\varphi_p(t)\dd t.
\end{equation}
In particular,
\begin{equation}\label{eq:varphi-mass}
\int_0^\infty\varphi_p(t)\dd t=f_p(0)=\frac{p-1}{2}.
\end{equation}
\end{proposition}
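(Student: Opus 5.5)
The plan is to compute the Laplace transform of $\varphi_p$ through the Dirichlet resolvent on $(0,p)$, as the remark before the statement suggests, rather than by integrating the theta series termwise. First I substitute $s=t/4$ in \eqref{eq:def-varphi}; the factor $\frac14$ cancels $\dd t=4\dd s$, giving
\[
\int_0^\infty e^{-xt}\varphi_p(t)\dd t=\sum_{a=1}^{p-1}\int_0^\infty e^{-4xs}(\partial_yK_p)(a,0,s)\dd s .
\]
This reduces the statement to a single-source identity. With $\lambda=4x$, I want to show that for each $1\le a\le p-1$,
\[
\int_0^\infty e^{-\lambda s}(\partial_yK_p)(a,0,s)\dd s=\frac{\sinh(\sqrt\lambda\,(p-a))}{\sinh(\sqrt\lambda\,p)} .
\]
Once this holds, $\sqrt\lambda=2\sqrt x$, and reindexing $j=p-a$ turns the sum over $a$ into $h_p(2\sqrt x)=f_p(x)$.

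To prove the single-source identity, I first take $\lambda>0$ real. For $y\in(0,p)$, Fubini applies because $K_p\ge0$ and $\int_0^\infty K_p(a,y,s)\dd s<\infty$, the spectral series \eqref{eq:heat-kernel-spectral} decaying exponentially for large $s$. This gives $\int_0^\infty e^{-\lambda s}K_p(a,y,s)\dd s=G_\lambda(a,y)$, where $G_\lambda$ is the Green's function of $\lambda-\partial_y^2$ with Dirichlet conditions. One way to verify this is to check termwise on the eigenfunctions $\sin(\pi ny/p)$ that both sides have coefficients $\frac{2}{p}\sin(\pi na/p)/(\lambda+\pi^2n^2/p^2)$. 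The other is to note that the Laplace transform solves $(\lambda-\partial_y^2)G=\delta_a$ with zero boundary values. For $y<a$ the explicit formula is
\[
G_\lambda(a,y)=\frac{\sinh(\sqrt\lambda\,y)\sinh(\sqrt\lambda\,(p-a))}{\sqrt\lambda\,\sinh(\sqrt\lambda\,p)} .
\]
Differentiating at $y=0$ yields the right-hand side above, provided $\partial_y$ may be moved inside the $s$-integral at $y=0$.

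That interchange is the main technical step. I would handle it with the image expansion from the proof of Proposition~\ref{prop:varphi-properties}. For $y$ in a neighborhood $[0,a/2]$ of $0$, every image distance satisfies $|a\pm y+2mp|\ge a/2$. Hence $\partial_yK_p(a,y,s)$ is bounded uniformly in such $y$ by $C s^{-3/2}e^{-a^2/(16 s)}$ for small $s$, which is integrable near $s=0$. For large $s$ the spectral series gives a uniform exponential bound. Dominated convergence then justifies differentiating under the integral sign on $[0,a/2]$, including at $y=0$ itself.

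This proves the identity for real $x>0$. Both sides are holomorphic on $\Re x>0$: the left side because $\varphi_p$ is Schwartz and supported on $[0,\infty)$ by Proposition~\ref{prop:varphi-properties}, and $f_p$ because $\sinh(2p\sqrt x)\neq0$ there. The identity theorem therefore extends \eqref{eq:laplace-varphi} to all of $\Re x>0$. Finally, I let $x\downarrow0$. Since $\varphi_p>0$, monotone convergence gives $\int_0^\infty\varphi_p=\lim_{x\downarrow0}f_p(x)=h_p(0)$, and $h_p(0)=(p-1)/2$ by the expansion in the proof of Lemma~\ref{lem:g-basic}. This is \eqref{eq:varphi-mass}.
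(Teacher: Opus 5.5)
Your proposal is correct and follows the paper's proof essentially step for step: the substitution $t=4s$, the Dirichlet resolvent formula for $0<y\le a$, differentiation at $y=0$ with $\lambda=4x$, summation over $a$ with $j=p-a$, analytic continuation to $\Re x>0$, and the limit $x\downarrow0$. Your dominated-convergence justification for moving $\partial_y$ inside the $s$-integral (using the image expansion) and your monotone-convergence argument for the mass identity supply details that the paper covers only with ``therefore'' and ``letting $x\downarrow0$''.
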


\begin{proof}
We first take $x>0$.  Let
\[
R_\lambda(a,y):=\int_0^\infty e^{-\lambda s}K_p(a,y,s)\dd s
\]
be the Dirichlet resolvent kernel for $-\partial_y^2+\lambda$ on $(0,p)$.
For $0<y\leq a$,
\begin{equation}\label{eq:resolvent-kernel}
R_\lambda(a,y)=
\frac{\sinh(\sqrt\lambda\,y)
      \sinh(\sqrt\lambda\,(p-a))}
     {\sqrt\lambda\,\sinh(\sqrt\lambda\,p)}.
\end{equation}
Therefore
\begin{equation}\label{eq:resolvent-derivative}
\int_0^\infty e^{-\lambda s}\partial_yK_p(a,0,s)\dd s
 =\frac{\sinh(\sqrt\lambda\,(p-a))}
        {\sinh(\sqrt\lambda\,p)}.
\end{equation}

Using $t=4s$ in \eqref{eq:def-varphi} and then
\eqref{eq:resolvent-derivative} with $\lambda=4x$, we obtain
\begin{align*}
\int_0^\infty e^{-xt}\varphi_p(t)\dd t
 &=\sum_{a=1}^{p-1}
   \int_0^\infty e^{-4xs}\partial_yK_p(a,0,s)\dd s\\
 &=\sum_{a=1}^{p-1}
   \frac{\sinh(2(p-a)\sqrt{x})}{\sinh(2p\sqrt{x})}\\
 &=h_p(2\sqrt{x})=f_p(x).
\end{align*}
Both sides are analytic for $\Re x>0$, so the identity extends there by
analytic continuation.  Letting $x\downarrow0$ gives
\eqref{eq:varphi-mass}.
\end{proof}

\section{Renormalized convolutions and positivity}\label{sec:convolution}

We are going to show that the smooth function $t^{-\delta}\omega(t)$ is the density function of the limit of measures defined by $\varphi_{p, s}(t)$, a rescaled kernel of $\varphi_p$ in Section \ref{sec:theta}.

\subsection{Finite convolutions}

For $s\in\Z$, define the rescaled kernel
\begin{equation}\label{eq:def-varphi-s}
\varphi_{p,s}(t):=p^{-2s}\varphi_p(p^{-2s}t).
\end{equation}
Proposition~\ref{prop:laplace-varphi} and a change of variables give
\begin{equation}\label{eq:scaled-laplace}
\int_0^\infty e^{-xt}\varphi_{p,s}(t)\dd t
 =f_p(p^{2s}x).
\end{equation}

If $I\subset\Z$ is finite, define $\Psi_{p,I}$ by the measure identity
\begin{equation}\label{eq:def-Psi-I}
\mathop{*}_{s\in I}
\bigl(\delta_0+\varphi_{p,s}(t)\dd t\bigr)
 =\delta_0+\Psi_{p,I}(t)\dd t.
\end{equation}
Equivalently, $\Psi_{p,I}$ is the sum of all nonempty convolutions of the
kernels $\varphi_{p,s}$ with $s\in I$.  It is a nonnegative Schwartz function
on $(0,\infty)$.

For $r\geq0$, set
\begin{equation}\label{eq:def-psi0-r}
\psi^{(0)}_{p,r}(t):=\Lambda^{-r}\Psi_{p,[-r,r]}(t).
\end{equation}

\begin{proposition}\label{prop:finite-convolution-transform}
For $\Re x>0$,
\begin{equation}\label{eq:finite-convolution-transform}
\widehat{\psi^{(0)}_{p,r}}(x)
 :=\int_0^\infty e^{-xt}\psi^{(0)}_{p,r}(t)\dd t
 =\Lambda^{-r}\prod_{s=-r}^{r}
  \bigl(1+f_p(p^{2s}x)\bigr)-\Lambda^{-r}.
\end{equation}
Moreover, locally uniformly for $x>0$,
\begin{equation}\label{eq:psi0-transform-limit}
\widehat{\psi^{(0)}_{p,r}}(x)
 \longrightarrow
 x^{-\alpha}\Omega\!\left(\log_{p^2}\frac1x\right).
\end{equation}
\end{proposition}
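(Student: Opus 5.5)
The first identity \eqref{eq:finite-convolution-transform} is formal. The Laplace transform turns convolution of finite positive measures on $[0,\infty)$ into products. The transform of $\delta_0+\varphi_{p,s}(t)\dd t$ is $1+f_p(p^{2s}x)$ by \eqref{eq:scaled-laplace}, which is valid for $\Re x>0$ since $\varphi_{p,s}$ is integrable. Hence, by \eqref{eq:def-Psi-I},
\[
1+\widehat{\Psi_{p,[-r,r]}}(x)=\prod_{s=-r}^r\bigl(1+f_p(p^{2s}x)\bigr).
\]
Multiplying by $\Lambda^{-r}$ and using \eqref{eq:def-psi0-r} gives \eqref{eq:finite-convolution-transform}.

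For the limit \eqref{eq:psi0-transform-limit}, I split the product into the factors with $s\ge0$ and those with $s<0$. The term $\Lambda^{-r}$ tends to $0$, since $\Lambda=(p+1)/2>1$ for odd $p$. For $s\ge0$, the partial product $\prod_{s=0}^r(1+f_p(p^{2s}x))$ converges to $F_p(x)$ locally uniformly for $x>0$. This follows because $f_p(p^{2s}x)=h_p(2p^s\sqrt x)=O(e^{-2p^s\sqrt x})$ by \eqref{eq:hp-forms}, uniformly on compacts of $(0,\infty)$, so $\sum_s\sup|f_p(p^{2s}x)|<\infty$ there. For the negative indices, I reindex and iterate the functional equation \eqref{eq:F-functional}. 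Applied $r$ times starting from $p^{-2r}x$, it gives
\[
\prod_{s=-r}^{-1}\bigl(1+f_p(p^{2s}x)\bigr)=\frac{F_p(p^{-2r}x)}{F_p(x)}.
\]
All factors are positive for $x>0$, so the division is legitimate. Combining the two pieces,
\[
\Lambda^{-r}\prod_{s=-r}^r\bigl(1+f_p(p^{2s}x)\bigr)=\Lambda^{-r}F_p(p^{-2r}x)\cdot\frac{\prod_{s=0}^r(1+f_p(p^{2s}x))}{F_p(x)}.
\]
The second factor tends to $1$ locally uniformly.

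The main step is evaluating $\Lambda^{-r}F_p(p^{-2r}x)$. I apply Proposition~\ref{prop:F-periodic} on the positive real axis with the small argument $p^{-2r}x\to0$, uniformly for $x$ in a compact subset of $(0,\infty)$:
\[
F_p(p^{-2r}x)=p^{2r\alpha}x^{-\alpha}\,\Omega\!\left(r+\log_{p^2}\frac1x\right)\bigl(1+O(p^{-2r\sigma}|x|^\sigma)\bigr).
\]
Since $\alpha=\log_{p^2}\Lambda$, we have $p^{2r\alpha}=\Lambda^r$, which exactly cancels $\Lambda^{-r}$. The $1$-periodicity of $\Omega$ removes the shift by $r$. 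The error factor is uniform because $x$ ranges over a compact set. This yields \eqref{eq:psi0-transform-limit}.

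I expect the only delicate point to be this uniformity. It is provided by the uniform-in-sector statement of Proposition~\ref{prop:F-periodic}, restricted to compact ranges of $x$, together with the continuity and boundedness of $\Omega$ on $\R$.
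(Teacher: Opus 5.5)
Your proposal is correct and follows the paper's proof essentially exactly. Your ratio $\prod_{s=0}^{r}(1+f_p(p^{2s}x))/F_p(x)$ is just the paper's $T_r(x)^{-1}$ with $T_r(x)=\prod_{s>r}(1+f_p(p^{2s}x))$, controlled by the same bound $f_p(p^{2s}x)\le (p-1)e^{-2p^s\sqrt{x}}$, and the main term $\Lambda^{-r}F_p(p^{-2r}x)$ is handled identically via Proposition~\ref{prop:F-periodic}, $p^{2\alpha}=\Lambda$, and the $1$-periodicity of $\Omega$. The only cosmetic difference is that you derive $\prod_{s=-r}^{r}(1+f_p(p^{2s}x))=F_p(p^{-2r}x)/T_r(x)$ by iterating the functional equation, whereas the paper reads it off directly from the definition of $F_p$.
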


\begin{proof}
Equation \eqref{eq:finite-convolution-transform} follows immediately from
\eqref{eq:scaled-laplace} and the fact that convolution becomes multiplication
under the Laplace transform.

For the limit, put
\[
T_r(x):=\prod_{s>r}\bigl(1+f_p(p^{2s}x)\bigr).
\]
Then
\[
\prod_{s=-r}^{r}\bigl(1+f_p(p^{2s}x)\bigr)
 =F_p(p^{-2r}x)T_r(x)^{-1}.
\]
By \eqref{eq:hp-forms} and the identity

$$
f_p(x)=h_p(2\sqrt{x}),
$$

we have

$$
0\le f_p(p^{2s}x)
=h_p(2p^s\sqrt{x})
\le (p-1)e^{-2p^s\sqrt{x}}.
$$

Hence, for $x\in[a,b]\subset(0,\infty)$,

$$
0\le f_p(p^{2s}x)
\le (p-1)e^{-2\sqrt{a}\,p^s}
=O\!\left(e^{-c_a p^s}\right),
\qquad c_a:=2\sqrt{a},
$$

uniformly in $x$. Therefore

$$
0\le \log T_r(x)
=\sum_{s>r}\log\!\bigl(1+f_p(p^{2s}x)\bigr)
\le \sum_{s>r}f_p(p^{2s}x)
\le (p-1)\sum_{s>r}e^{-2\sqrt{a}\,p^s}.
$$

The final tail tends to $0$ as $r\to\infty$, independently of $x\in[a,b]$. Thus

$$
T_r(x)\longrightarrow 1
$$

uniformly on $[a,b]$.
  On the other hand,
Proposition~\ref{prop:F-periodic} gives
\[
F_p(p^{-2r}x)
 =p^{2\alpha r}x^{-\alpha}
  \Omega\!\left(r+\log_{p^2}\frac1x\right)(1+o(1)).
\]
Since $p^{2\alpha}=\Lambda$ and $\Omega$ has period $1$,
\[
\Lambda^{-r}F_p(p^{-2r}x)
 \longrightarrow
 x^{-\alpha}\Omega\!\left(\log_{p^2}\frac1x\right)
\]
locally uniformly.  The subtracted term $\Lambda^{-r}$ tends to zero, proving
\eqref{eq:psi0-transform-limit}.
\end{proof}

To match the normalization in the coefficient asymptotic, define
\begin{equation}\label{eq:def-psi-r}
\psi_{p,r}(t):=4\psi^{(0)}_{p,r}(2t)
 =4\Lambda^{-r}\Psi_{p,[-r,r]}(2t).
\end{equation}
Then
\[
\widehat{\psi_{p,r}}(x)=2\widehat{\psi^{(0)}_{p,r}}(x/2).
\]
Define the periodic function
\begin{equation}\label{eq:def-Omega-B}
\Omega_B(u):=2^{1+\alpha}
\Omega\bigl(u+\log_{p^2}2\bigr).
\end{equation}
If $\Omega(u)=\sum_mc_me^{2\pi imu}$, then
\begin{equation}\label{eq:Omega-B-Fourier}
\Omega_B(u)=\sum_{m\in\Z}d_me^{2\pi imu},
\end{equation}
with the same $d_m$ as in \eqref{eq:def-dm}.

\begin{corollary}\label{cor:psi-transform-limit}
Locally uniformly for $x>0$,
\begin{equation}\label{eq:psi-transform-limit}
\widehat{\psi_{p,r}}(x)
 \longrightarrow
\widehat\psi(x):=
 x^{-\alpha}\Omega_B\!\left(\log_{p^2}\frac1x\right).
\end{equation}
\end{corollary}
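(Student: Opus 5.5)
This follows from Proposition~\ref{prop:finite-convolution-transform} by a change of variables; the only work is bookkeeping of the constants $2^{1+\alpha}$ and the shift $\log_{p^2}2$. First I will check the identity $\widehat{\psi_{p,r}}(x)=2\widehat{\psi^{(0)}_{p,r}}(x/2)$ recorded above. By \eqref{eq:def-psi-r} and the substitution $t'=2t$,
\[
\int_0^\infty e^{-xt}4\psi^{(0)}_{p,r}(2t)\dd t=2\int_0^\infty e^{-(x/2)t'}\psi^{(0)}_{p,r}(t')\dd t'=2\widehat{\psi^{(0)}_{p,r}}(x/2).
\]
Since $x\mapsto x/2$ maps compact subsets of $(0,\infty)$ to compact subsets, the locally uniform convergence \eqref{eq:psi0-transform-limit} then gives, locally uniformly in $x>0$,
\[
\widehat{\psi_{p,r}}(x)\longrightarrow 2\,(x/2)^{-\alpha}\,\Omega\!\left(\log_{p^2}\frac{2}{x}\right)=2^{1+\alpha}x^{-\alpha}\,\Omega\!\left(\log_{p^2}\frac1x+\log_{p^2}2\right).
\]
By \eqref{eq:def-Omega-B}, the right-hand side is exactly $x^{-\alpha}\Omega_B(\log_{p^2}(1/x))$, which is \eqref{eq:psi-transform-limit}.

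Next I will verify the Fourier claim \eqref{eq:Omega-B-Fourier}, so that the same $d_m$ occur. I substitute $\Omega(u)=\sum_m c_m e^{2\pi imu}$, which converges normally on the real line by Corollary~\ref{cor:F-Fourier}. Then
\[
\Omega_B(u)=\sum_m 2^{1+\alpha}c_m\,e^{2\pi i m\log_{p^2}2}\,e^{2\pi imu}.
\]
Using $2\pi m/\log(p^2)=\tau_m$, we get $e^{2\pi im\log_{p^2}2}=2^{i\tau_m}$. Hence the coefficient is $2^{1+\alpha+i\tau_m}c_m=2^{1+\beta_m}c_m=d_m$, in agreement with \eqref{eq:def-dm}.

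I do not expect any real obstacle. The one point to watch is that the argument of $\Omega$ is evaluated consistently with the real principal branch of $x^{-\alpha}$ for $x>0$, and this is automatic on the positive axis.
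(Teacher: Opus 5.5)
Your proof is correct and is the same as the paper's. Like the paper, you apply the locally uniform limit \eqref{eq:psi0-transform-limit} at $x/2$, multiply by $2$, and absorb the factor $2^{1+\alpha}$ and the shift $\log_{p^2}2$ into the definition \eqref{eq:def-Omega-B} of $\Omega_B$; your checks of the scaling identity $\widehat{\psi_{p,r}}(x)=2\widehat{\psi^{(0)}_{p,r}}(x/2)$ and of $d_m=2^{1+\beta_m}c_m$ are correct bookkeeping that the paper only asserts.
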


\begin{proof}
Apply \eqref{eq:psi0-transform-limit} at $x/2$ and multiply by $2$:
\[
2\left(\frac x2\right)^{-\alpha}
\Omega\!\left(\log_{p^2}\frac2x\right)
 =x^{-\alpha}\Omega_B\!\left(\log_{p^2}\frac1x\right).
\]
\end{proof}

\subsection{The density function of the limit measure}

Let
\begin{equation}\label{eq:def-mu-r}
\mu_r(\dd t):=\psi_{p,r}(t)\dd t.
\end{equation}

\begin{proposition}\label{prop:vague-limit}
The measures $\mu_r$ converge vaguely on $[0,\infty)$ to a positive Radon
measure $\mu$.  Its Laplace transform is $\widehat\psi$ from
\eqref{eq:psi-transform-limit}.  Moreover, $\mu$ has the smooth density
\begin{equation}\label{eq:def-psi-density}
\psi(t):=
\sum_{m\in\Z}\frac{d_m}{\Gamma(\beta_m)}t^{\beta_m-1},
\qquad t>0.
\end{equation}
Consequently,
\begin{equation}\label{eq:psi-omega}
\psi(t)=t^{\alpha-1}\omega(t)=t^{-\delta}\omega(t).
\end{equation}
\end{proposition}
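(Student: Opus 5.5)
The plan is to obtain a limit measure by compactness, identify it through the Laplace-transform limit of Corollary~\ref{cor:psi-transform-limit}, and then recognise its density by inverting $\widehat\psi$ term by term.

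\textbf{Step 1: the vague limit exists.} Since $\psi_{p,r}\ge0$, we have for any $T>0$
\[
\mu_r([0,T])\le e^{xT}\widehat{\psi_{p,r}}(x)
\]
with, say, $x=1/T$. By Corollary~\ref{cor:psi-transform-limit} the right-hand side is bounded uniformly in $r$. Helly's selection theorem, applied on each $[0,T]$ with a diagonal argument, then gives vaguely convergent subsequences.

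\textbf{Step 2: identifying the Laplace transform.} Let $\mu$ be any subsequential limit. The uniform exponential bound above, used with a slightly smaller $x'<x$, gives tightness of $e^{-xt}\mu_r$ at infinity. Hence $\int e^{-xt}\,\mu_r(\dd t)\to\int e^{-xt}\,\mu(\dd t)$ for every $x>0$, and this limit equals $\widehat\psi(x)$. One must also rule out mass escaping to $t=0$. Vague convergence on $[0,\infty)$ handles this, since $e^{-xt}$ is continuous at $0$, and an atom at $0$ would force $\widehat\psi(x)\to$ a nonzero constant as $x\to\infty$. That is incompatible with $\widehat\psi(x)=O(x^{-\alpha})$. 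A positive measure is determined by its Laplace transform on $(0,\infty)$, so all subsequential limits agree and the full sequence converges vaguely.

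\textbf{Step 3: the density.} Define $\psi$ by \eqref{eq:def-psi-density}. Since $|t^{\beta_m-1}|=t^{\alpha-1}$, the bound \eqref{eq:omega-coeff-decay} shows that the series converges absolutely and locally uniformly, with $|\psi(t)|\ll t^{\alpha-1}$. This majorant is locally integrable because $\alpha>0$, and the same estimate applies to $t$-derivatives, which only add polynomial factors in $|\beta_m|$; so $\psi$ is smooth. Fubini then gives
\[
\int_0^\infty e^{-xt}\psi(t)\,\dd t=\sum_m\frac{d_m}{\Gamma(\beta_m)}\Gamma(\beta_m)x^{-\beta_m}=x^{-\alpha}\sum_m d_m e^{-i\tau_m\log x}.
\]
Because $e^{-i\tau_m\log x}=e^{2\pi i m\log_{p^2}(1/x)}$, this equals $x^{-\alpha}\Omega_B(\log_{p^2}(1/x))=\widehat\psi(x)$ by \eqref{eq:Omega-B-Fourier}. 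The signed measure $\psi\,\dd t$ and $\mu$ therefore have the same Laplace transform on $(0,\infty)$. Uniqueness of the Laplace transform for measures with $\int e^{-xt}|\cdot|<\infty$ gives $\mu=\psi\,\dd t$. Identity \eqref{eq:psi-omega} is immediate, since $t^{\beta_m-1}=t^{\alpha-1}t^{i\tau_m}$.

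\textbf{Main obstacle.} I expect the delicate part to be Step 2, namely passing the Laplace transform through the vague limit. This needs the uniform-in-$r$ tail control at infinity, which comes from comparing $e^{-xt}$ with $e^{-x't}$, and it needs the exclusion of any atom at the origin. The termwise Fubini in Step 3 is routine once the exponential decay of $d_m/\Gamma(\beta_m)$ is in hand.
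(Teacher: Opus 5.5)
Your proposal is correct and follows essentially the same route as the paper. Both arguments bound the local masses uniformly via $\mu_r([0,T])\le e^{xT}\widehat{\psi_{p,r}}(x)$, extract vaguely convergent subsequences by compactness and a diagonal argument, use a uniform exponential tail bound ($e^{-xt}$ against $e^{-x't}$, the paper takes $x'=x/2$) to pass the Laplace transform to the limit, invoke Laplace uniqueness to get convergence of the full sequence, and identify the density by termwise integration of the Fourier series. Your explicit Fubini majorant $t^{\alpha-1}$ and your remark ruling out an atom at $0$ are small justifications the paper leaves implicit; the latter is redundant but harmless, since uniqueness on $[0,\infty)$ already excludes an atom.
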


\begin{proof}

 Fix $x_0>0$ and $T>0$. For any $0<t\le T$, we have $e^{-x_0T} \le e^{-x_0t}$. Hence, $1 \le e^{-x_0T}e^{x_0t}$. As $\psi_{p, r}$ is nonnegative, it follows that $\mu_r([0,T])
 \leq e^{x_0T}\widehat{\psi_{p,r}}(x_0)$. Since $\widehat{\psi_{p,r}} \to \widehat{\psi}$ by \eqref{eq:psi-transform-limit}, $\mu_r([0,T])<C_T$ for some constant $C_T$ independent of $r$.

Since, for every $T>0$,

$$
\sup_r \mu_r([0,T])<\infty,
$$

the restrictions $\mu_r|_{[0,T]}$ form a bounded family of finite positive
measures on the compact interval $[0,T]$.  By weak compactness of bounded
sets of finite measures, every sequence $(\mu_r)$ has a subsequence converging
weakly on $[0,T]$.

Applying this successively for $T=1,2,3,\ldots$, and at each stage extracting
a subsequence of the preceding one, a diagonal argument produces a single
subsequence $(\mu_{r_j})$ that converges weakly on every compact interval
$[0,T]$.  The resulting limits on the intervals $[0,T]$ are compatible and
therefore define a positive Radon measure $\mu$ on $[0,\infty)$.

Equivalently,

$$
\int_0^\infty \phi(t)\,\mu_{r_j}(\dd t)
\longrightarrow
\int_0^\infty \phi(t)\,\mu(\dd t)
$$

for every $\phi\in C_c([0,\infty))$.  This is precisely the statement that
$\mu_{r_j}\to\mu$ vaguely on $[0,\infty)$.

A positive Radon measure on $[0,\infty)$ is uniquely determined by its Laplace transform; see \cite[XIII.1. Theorem 1]{Feller71}. Hence every vaguely convergent subsequence of $(\mu_r)$ has the same limit $\mu$. Since the family is relatively compact for vague convergence, it follows that the full sequence satisfies

$$
\mu_r\longrightarrow\mu
$$

vaguely on $[0,\infty)$.
Let us fix $x>0$. Since the function $t\mapsto e^{-xt}$ is not compactly supported, vague convergence cannot be applied to it directly. We therefore first truncate the integral. For $T>0$, choose $\chi_T\in C_c([0,\infty))$ such that

$$
0\le \chi_T\le 1,
\qquad
\chi_T(t)=1 \quad \text{for } 0\le t\le T.
$$

Then vague convergence gives

$$
\int_0^\infty e^{-xt}\chi_T(t)\,\mu_{r_j}(\dd t)
\longrightarrow
\int_0^\infty e^{-xt}\chi_T(t)\,\mu(\dd t).
$$

On the other hand, for $t\ge T$,

$$
e^{-xt}
=e^{-xt/2}e^{-xt/2}
\le e^{-xT/2}e^{-xt/2},
$$

and hence

$$
\int_T^\infty e^{-xt}\mu_r(\dd t)
\le e^{-xT/2}\widehat{\psi_{p,r}}(x/2),
$$
and the same bound passes to the vague limit $\mu$.
By \eqref{eq:psi-transform-limit}, the quantities
$\widehat{\psi_{p,r}}(x/2)$ are uniformly bounded in $r$, so

$$
\sup_r\int_T^\infty e^{-xt}\mu_r(\dd t)
\longrightarrow 0
\qquad (T\to\infty).
$$
Therefore, letting first $r\to\infty$ and then $T\to\infty$, we obtain

$$
\int_0^\infty e^{-xt}\mu(\dd t)
=
\lim_{j\to\infty}
\int_0^\infty e^{-xt}\mu_{r}(\dd t)
=
\widehat\psi(x),
$$

where the last equality follows from \eqref{eq:psi-transform-limit}.

By \eqref{eq:Omega-B-Fourier},
\begin{equation}\label{eq:hatpsi-Fourier}
\widehat\psi(x)=\sum_{m\in\Z}d_mx^{-\beta_m}.
\end{equation}
Since $\beta_m=\alpha+i\tau_m$, we have
\[
t^{\beta_m-1}=t^{\alpha-1}t^{i\tau_m}.
\]
Hence \eqref{eq:def-psi-density} is obtained from the series
\eqref{eq:def-omega} by multiplication by the smooth factor
$t^{\alpha-1}$.  The exponential estimate established in
Lemma~\ref{lem:periodic-transfer} therefore implies that the series
\eqref{eq:def-psi-density}, together with all derivatives with respect to
$\log t$, converges absolutely and locally uniformly on $(0,\infty)$.  Also,
\[
\int_0^\infty e^{-xt}t^{\beta_m-1}\dd t
 =\Gamma(\beta_m)x^{-\beta_m}.
\]
Absolute summability permits termwise integration, so the smooth function
\eqref{eq:def-psi-density} has Laplace transform \eqref{eq:hatpsi-Fourier}.
Uniqueness of Laplace transforms now gives $\mu(\dd t)=\psi(t)\dd t$ on
$(0,\infty)$.  Finally, comparing \eqref{eq:def-psi-density} with
\eqref{eq:def-omega} gives \eqref{eq:psi-omega}.
\end{proof}

\subsection{Strict positivity}

The positivity of every finite convolution shows that $\psi\geq0$ almost
everywhere, but Theorem~\ref{thm:main} requires strict pointwise positivity. Before proving the strict positivity, we will first prove three lemmas.

Recall that the measure $\mu= \psi(t)\dd t.$
\begin{lemma} \label{lemma:mu_as_convolution}
    We have $\mu=(\delta_0+\chi(t) \dd t) * \nu$ where $\chi(t)=2\varphi_p(2t)$ and  $\nu$ is a positive Radon measure whose Laplace transform is $\widehat\psi(x)/({1+f_p(x/2)}).$
\end{lemma}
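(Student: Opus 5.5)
The idea is to split off the scale $s=0$ factor from the finite convolutions and pass to the limit in the remaining cofactor. After the substitution $t\mapsto 2t$ used in \eqref{eq:def-psi-r}, the factor $\delta_0+\varphi_{p,0}(t)\dd t$ becomes $\delta_0+\chi(t)\dd t$. By \eqref{eq:scaled-laplace} with $s=0$ and the rescaling, its Laplace transform is $1+f_p(x/2)$.

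\emph{Step 1: a finite cofactor.} For $r\ge1$, commutativity of convolution in \eqref{eq:def-Psi-I} gives
\[
\delta_0+\Psi_{p,[-r,r]}\dd t=(\delta_0+\varphi_{p,0}\dd t)*\bigl(\delta_0+\Psi_{p,[-r,r]\setminus\{0\}}\dd t\bigr).
\]
Rescale as in \eqref{eq:def-psi-r}: push forward under $t\mapsto t/2$, multiply by $2\Lambda^{-r}$, and absorb the Dirac terms. This yields
\[
\mu_r+2\Lambda^{-r}\delta_0=(\delta_0+\chi\dd t)*\nu_r,
\]
where $\nu_r$ is the positive measure $2\Lambda^{-r}\bigl(\delta_0+\text{rescaled }\Psi_{p,[-r,r]\setminus\{0\}}\dd t\bigr)$. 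Its Laplace transform is
\[
\widehat{\nu_r}(x)=\frac{\widehat{\psi_{p,r}}(x)+2\Lambda^{-r}}{1+f_p(x/2)}.
\]
This follows from Proposition~\ref{prop:finite-convolution-transform}, since $1+f_p(x/2)>0$ for $x>0$.

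\emph{Step 2: limit of the cofactors.} Corollary~\ref{cor:psi-transform-limit} shows that $\widehat{\nu_r}(x)\to\widehat\psi(x)/(1+f_p(x/2))$ locally uniformly for $x>0$. Then I repeat verbatim the compactness argument of Proposition~\ref{prop:vague-limit}. The bound $\nu_r([0,T])\le e^{x_0T}\widehat{\nu_r}(x_0)$ gives tightness on compacts, and a diagonal subsequence produces a vague limit $\nu$. The tail bound $\int_T^\infty e^{-xt}\nu_r\le e^{-xT/2}\widehat{\nu_r}(x/2)$ identifies $\widehat\nu=\widehat\psi/(1+f_p(\cdot/2))$. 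Uniqueness of Laplace transforms \cite[XIII.1]{Feller71} then gives convergence of the full sequence. Positivity of $\nu$ is inherited from the $\nu_r$.

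\emph{Step 3: identification.} The measure $(\delta_0+\chi\dd t)*\nu$ is positive, with Laplace transform $(1+f_p(x/2))\widehat\nu(x)=\widehat\psi(x)$. By Proposition~\ref{prop:vague-limit}, $\widehat\psi$ is also the Laplace transform of $\mu$, so uniqueness again gives $\mu=(\delta_0+\chi\dd t)*\nu$.

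I expect the main obstacle to be bookkeeping rather than analysis. First, the rescaling constants must be tracked so that $\chi(t)=2\varphi_p(2t)$ matches exactly. Second, the vanishing Dirac mass $2\Lambda^{-r}\delta_0$ must be handled; it tends to $0$ and does not affect the limit. The Laplace convergence and uniqueness steps are identical to those already carried out in Proposition~\ref{prop:vague-limit}. The Laplace-transform form of the statement also sidesteps the question of whether convolution commutes with vague limits.
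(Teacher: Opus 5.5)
Your proposal is correct and follows essentially the same route as the paper. Your $\nu_r$ is exactly the positive measure with Laplace transform $L_r(x)=2\Lambda^{-r}\prod_{s\neq0}(1+f_p(p^{2s}x/2))$, you pass to the limit by the same compactness-and-tail argument, and you conclude by uniqueness of Laplace transforms; the only cosmetic difference is that you make the finite-level identity $\mu_r+2\Lambda^{-r}\delta_0=(\delta_0+\chi\,\dd t)*\nu_r$ explicit as a measure identity, whereas the paper records it only on the Laplace side.
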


\begin{proof}
Recall that $\psi_{p, r}(t)=4\psi_{p, r}^{(0)}(2t).$ By the scaling property of Laplace transform and Proposition \ref{prop:finite-convolution-transform}, $$\widehat{\psi_{p,r}}(x)
 :=\int_0^\infty e^{-xt}\psi_{p,r}(t)\dd t
 =2\Lambda^{-r}\prod_{s=-r}^{r}
  \bigl(1+f_p(p^{2s}x/2)\bigr)-2\Lambda^{-r}.$$

For $r\geq1$, define
\[
L_r(x):=
2\Lambda^{-r}
\prod_{\substack{-r\leq s\leq r\\ s\neq0}}
\left(1+f_p(p^{2s}x/2)\right).
\]

Since 
\[
\begin{aligned}
\bigl(1+f_p(x/2)\bigr)L_r(x)
&=
2\Lambda^{-r}
\prod_{s=-r}^{r}
\left(1+f_p(p^{2s}x/2)\right) \\
&=
\widehat{\psi_{p,r}}(x)+2\Lambda^{-r}.
\end{aligned}
\] and $\Lambda>1$, Corollary~\ref{cor:psi-transform-limit} yields, locally
uniformly for $x>0$,
\begin{equation}\label{eq:def-L-limit}
L_r(x)\longrightarrow
L(x):=
\frac{\widehat\psi(x)}{1+f_p(x/2)}.
\end{equation}

We next show that $L(x)$ is the Laplace transform of a positive Radon measure.

First, there exists a positive measure, say $\nu_r$, whose Laplace transform is $L_r(x)$ since each factor $1+f_p(p^{2s}x/2)$ of $L_r(x)$ is the Laplace transform of a suitable rescaling of
$\delta_0+\varphi_p(t)\,\dd t$ and multiplication of Laplace transforms
corresponds to convolution of measures. 

For fixed $x_0,T>0$, we have $\nu_r([0,T])
\leq e^{x_0T}L_r(x_0),
$ and the right-hand side is bounded independently of $r$ by
\eqref{eq:def-L-limit}.  Thus $(\nu_r)$ is locally bounded in mass. The compactness-and-tail argument used in the proof of
Proposition~\ref{prop:vague-limit} now applies verbatim. Namely, after
passing to a subsequence, $\nu_r$ converges vaguely to a positive Radon
measure $\nu$, while
\[
\int_T^\infty e^{-xt}\nu_r(\dd t)
\leq e^{-xT/2}L_r(x/2)
\]
shows that the Laplace tails are uniformly negligible.  Consequently,
for every $x>0$,
\[
\int_{[0,\infty)}e^{-xt}\nu(\dd t)
=
\lim_{r\to\infty}L_r(x)
=
L(x).
\]
Hence $L$ is the Laplace transform of $\nu$.

Now set $\chi(t)=2\varphi_p(2t)$. Since $(1+f_p(x/2))L(x)=\widehat{\psi}(x)$, we have that the Laplace transform of $(\delta_0+\chi(t)\dd t) * \nu$ is $\widehat{\psi}(x)$. Uniqueness of Laplace
transforms therefore gives the measure identity
\begin{equation}\label{eq:mu-factor-measure}
\psi(t)\,\dd t
=
\bigl(\delta_0+\chi(t)\,\dd t\bigr)*\nu.
\end{equation}
\end{proof}

Since $\mu=(\delta_0+\chi(t) \dd t) * \nu$ and the convolution of a measure with $\delta_0$ is the original measure, we have $\mu=\nu+ \chi(t) \dd t * \nu.$ The next two lemmas treat the two terms of $\mu$ separately. 
\begin{lemma} \label{lemma:nu>0}
    For any $t>0$, $\nu([0,t))>0$.
\end{lemma}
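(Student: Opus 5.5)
We need to show $\nu([0,t))>0$ for every $t>0$. We argue by contradiction and use the Laplace transform $L(x)=\widehat\psi(x)/(1+f_p(x/2))$ from Lemma~\ref{lemma:mu_as_convolution}. Suppose $\nu([0,t_0))=0$ for some $t_0>0$, so that $\nu$ is supported in $[t_0,\infty)$.

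Then for real $x>0$,
\[
L(x)=\int_{[t_0,\infty)}e^{-xt}\,\nu(\dd t)\le e^{-xt_0/2}\int e^{-xt/2}\,\nu(\dd t)=e^{-xt_0/2}L(x/2).
\]
Iterating this inequality, or simply taking $x\to\infty$ along a geometric sequence, shows that $L(x)$ decays like $e^{-cx}$ up to controlled factors. Concretely, we take $x_n=p^{2n}x_1$, and the bound yields $L(x_n)\le e^{-x_nt_0/2}L(x_n/2)$.

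We compare this with the actual size of $L$ at large $x$. By Corollary~\ref{cor:psi-transform-limit}, $\widehat\psi(x)=x^{-\alpha}\Omega_B(\log_{p^2}(1/x))$. Since $\Omega_B$ is bounded between positive constants on the real axis (Proposition~\ref{prop:F-periodic}), we get $\widehat\psi(x)\asymp x^{-\alpha}$ for all $x>0$. Moreover, $1+f_p(x/2)\in[1,\Lambda]$ for $x>0$, because $h_p$ is positive and decreasing to $0$ on $(0,\infty)$; alternatively, $f_p$ is the Laplace transform of a positive measure of mass $(p-1)/2$. Hence $L(x)\asymp x^{-\alpha}$ for all $x>0$, a polynomial decay. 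Combining this with the inequality from the first step gives
\[
c\,x^{-\alpha}\le L(x)\le e^{-xt_0/2}\,C\,(x/2)^{-\alpha},
\]
which fails as $x\to\infty$. This contradiction proves $\nu([0,t_0))>0$.

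The only point requiring care is that the two-sided bound $L(x)\asymp x^{-\alpha}$ must hold for all large real $x$, not just asymptotically as $x\to0$. This comes from the exact identity $\widehat\psi(x)=x^{-\alpha}\Omega_B(\log_{p^2}(1/x))$, valid for all $x>0$ as the limit in Corollary~\ref{cor:psi-transform-limit}, together with the positivity and boundedness of the periodic function $\Omega_B$ on $\R$. No limit in $x$ is needed, so the main obstacle reduces to citing these bounds correctly.
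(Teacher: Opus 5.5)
Your proposal is correct and is essentially the paper's proof: the same contradiction, pitting $L(x)\le e^{-xt_0/2}L(x/2)$ against the two-sided bound $L(x)\asymp x^{-\alpha}$ obtained from $\widehat\psi(x)=x^{-\alpha}\Omega_B(\log_{p^2}(1/x))$ and the positive bounds on $\Omega_B$. The only cosmetic differences are that you bound $1+f_p(x/2)$ between $1$ and $\Lambda$ for all $x>0$, where the paper instead uses $f_p(x/2)\to0$ as $x\to\infty$, and your geometric-sequence remark is unnecessary.
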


\begin{proof}
We prove by contradiction. Suppose that $\nu([0,t_0))=0$ for some $t_0>0$.  Then $\nu$ is
supported in $[t_0,\infty)$, so 
\[
\begin{aligned}
L(x)
&=
\int_{[t_0,\infty)}e^{-xt}\nu(\dd t) \\
&=
\int_{[t_0,\infty)}
e^{-xt/2}e^{-xt/2}\nu(\dd t) \\
&\leq
e^{-xt_0/2}
\int_{[t_0,\infty)}e^{-xt/2}\nu(\dd t) \\
&=
e^{-xt_0/2}L(x/2).
\end{aligned}
\]

Now, we claim that there exist $c, C>0$ such that $cx^{-\alpha}\le L(x) \le Cx^{-\alpha}$ for sufficiently large $x$.

Suppose the claim holds. Then for all sufficiently large $x$, since $c x^{-\alpha}
\leq L(x)$ and $L(x/2)
\leq C(x/2)^{-\alpha}
=
C2^\alpha x^{-\alpha}$, it follows that $$c
\leq
C2^\alpha e^{-xt_0/2},$$
which is impossible as $x\to\infty$. Hence, $\nu([0,t))>0$ for any $t>0.$

To prove the claim, recall that $$L(x)=
\frac{\widehat\psi(x)}{1+f_p(x/2)}.$$
    
 By Proposition~\ref{prop:F-periodic}, the periodic function $\Omega$ is
bounded above and below by positive constants on the real axis.  Hence the
same is true of $\Omega_B$ by \eqref{eq:def-Omega-B}.  Since
\[
\widehat\psi(x)
=
x^{-\alpha}
\Omega_B\!\left(\log_{p^2}\frac1x\right),
\]
there are constants $d,D>0$ such that for any $x>0$, 
$d x^{-\alpha}
\leq
\widehat\psi(x)
\leq
D x^{-\alpha}.$

Since $f_p(x/2)\to0$ as $x\to\infty$, we may take $c=d/(1+\epsilon)$ for some $\epsilon>0$ and $C=D$. Then the claim follows.

\end{proof}

\begin{lemma}\label{lemma:h_positivity}
    The measure $\chi(t)\dd t * \nu=h(t)\dd t$ for some continuous function $h(t)$. Moreover, $h(t)>0$ on  for any $t>0$. 
\end{lemma}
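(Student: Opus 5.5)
The natural plan is to write the convolution out explicitly. Since $\chi$ is bounded and continuous on $\R$ (extended by zero for $t\le0$), I would define
\[
h(t):=\int_{[0,\infty)}\chi(t-s)\,\nu(\dd s),\qquad t\in\R,
\]
and check that $\chi(t)\dd t*\nu=h(t)\dd t$. Testing against $\phi\in C_c([0,\infty))$ and applying Fubini–Tonelli (all quantities are nonnegative) gives $\int\phi\,\dd(\chi\,\dd t*\nu)=\iint\phi(s+u)\chi(u)\,\dd u\,\nu(\dd s)=\int\phi(t)h(t)\,\dd t$. The integral defining $h(t)$ is finite because only $s\in[0,t]$ contributes, since $\chi(t-s)=0$ for $s>t$. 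On that set $\chi\le\|\chi\|_\infty$, and $\nu([0,t])<\infty$ since $\nu$ is Radon.

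For continuity, fix $t_0>0$ and restrict to $t\in[0,t_0+1]$. There the integrand is supported in $s\in[0,t_0+1]$, and it is dominated by $\|\chi\|_\infty$, which is $\nu$-integrable on this compact set. Moreover, $t\mapsto\chi(t-s)$ is continuous for each $s$, because the zero extension of $\chi$ is smooth by Proposition~\ref{prop:varphi-properties}. Dominated convergence therefore gives continuity of $h$ on $[0,t_0+1]$, and since $t_0$ is arbitrary, on all of $[0,\infty)$.

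For strict positivity, fix $t>0$. The integrand $\chi(t-s)=2\varphi_p(2(t-s))$ is strictly positive for every $s\in[0,t)$, again by Proposition~\ref{prop:varphi-properties}. Hence
\[
h(t)\ \ge\ \int_{[0,t)}\chi(t-s)\,\nu(\dd s).
\]
The right side is the integral of a strictly positive function against the restriction of $\nu$ to $[0,t)$, and Lemma~\ref{lemma:nu>0} says this restriction has positive mass. To make this quantitative, write $[0,t)=\bigcup_n[0,t-1/n]$; by continuity of measure, $\nu([0,t-1/n])>0$ for some $n$. On the compact set $[0,t-1/n]$, the argument $t-s$ ranges over $[1/n,t]$, where $\chi$ has a positive minimum $c_n$ by continuity and strict positivity. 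Therefore $h(t)\ge c_n\,\nu([0,t-1/n])>0$.

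The only delicate point is this last step. One cannot simply combine "$\chi>0$" with "$\nu([0,t))>0$", because $\chi(t-s)\to\chi(0)=0$ as $s\uparrow t$. The mass of $\nu$ near $t$ could therefore contribute arbitrarily little. The exhaustion by the compact sets $[0,t-1/n]$ avoids this by ensuring $\nu$ charges a region where $\chi(t-\cdot)$ is bounded away from zero. Everything else is routine measure theory.
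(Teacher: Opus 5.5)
Your proof is correct and follows essentially the same route as the paper: the same kernel $h(t)=\int\chi(t-s)\,\nu(\dd s)$, the same Fubini identification, continuity (you use dominated convergence where the paper uses uniform continuity of $\chi$ on $[-T,T]$ together with $\nu([0,T])<\infty$), and positivity from Lemma~\ref{lemma:nu>0} plus $\chi>0$ on $(0,\infty)$. One correction to your commentary: the direct combination is in fact valid, because the integral of an everywhere strictly positive measurable function over a set of positive $\nu$-measure is automatically strictly positive (otherwise the function would vanish $\nu$-a.e.\ there). Your exhaustion by $[0,t-1/n]$ is therefore a correct but unnecessary re-proof of this standard fact, not a fix for a genuine subtlety in the paper's one-line conclusion.
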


\begin{proof}
    First, recall that $\chi(t)=2\varphi_p(2t)$.. Hence, by Proposition \ref{prop:varphi-properties}, $\chi(t)>0$ for $t>0$ and after extended by zero, $\chi(t)$ is still a continuous function.    
    
    Now define 
\begin{equation}\label{eq:def-h-positive}
h(t)=
\int_{[0,t)}\chi(t-s)\nu(\dd s),
\qquad t>0.
\end{equation}
For every nonnegative compactly supported function $g$,
\[
\begin{aligned}
\int_0^\infty g(t)
\bigl((\chi(u)\,\dd u)*\nu\bigr)(\dd t)
&=
\int_{[0,\infty)}
\int_0^\infty
g(s+u)\chi(u)\,\dd u\,\nu(\dd s) \\
&=
\int_0^\infty
g(t)
\left(
\int_{[0,t]}
\chi(t-s)\nu(\dd s)
\right)\dd t.
\end{aligned}
\]
Since $\chi(0)=0$, the intervals $[0,t]$ and $[0,t)$ give the same
integral. Hence, $\chi(t)\dd t * \nu=h(t)\dd t$.

To show $h$ is continuous, for $t\geq0$ we may write
\[
h(t)
=
\int_{[0,\infty)}\chi(t-s)\nu(\dd s),
\]
where $\chi(u)=0$ for $u\leq0$.  Fix $T>0$.  If $t,t'\in[0,T]$, only
$s\in[0,T]$ contributes, and hence
\[
\begin{aligned}
|h(t)-h(t')|
&\leq
\int_{[0,T]}
|\chi(t-s)-\chi(t'-s)|\,\nu(\dd s) \\
&\leq
\nu([0,T])
\sup_{s\in[0,T]}
|\chi(t-s)-\chi(t'-s)|.
\end{aligned}
\]
Because $\chi$ is uniformly continuous on the compact interval $[-T,T]$,
the last expression tends to $0$ as $t'\to t$.  Thus $h$ is continuous.

Now we want to show that $h(t)>0$ for any $t>0$.  Fix $t>0$.  By
Lemma \ref{lemma:nu>0}, we have $\nu([0,t))>0.$ For every $s\in[0,t)$ we have $t-s>0$, and therefore $\chi(t-s)>0$.
Hence
\[
h(t)
=
\int_{[0,t)}\chi(t-s)\nu(\dd s)
>0.
\]
\end{proof}

\begin{proposition}\label{prop:omega-positive}
The functions $\psi$ and $\omega$ are strictly positive on $(0,\infty)$.  In
particular,
\begin{equation}\label{eq:omega-positive-min}
\min_{1\leq t\leq p^2}\omega(t)>0.
\end{equation}
\end{proposition}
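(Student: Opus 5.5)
The plan is to combine the decomposition $\mu=\nu+\chi(t)\dd t*\nu$ from Lemma~\ref{lemma:mu_as_convolution} with the density identification of Proposition~\ref{prop:vague-limit}. We know $\mu=\psi(t)\dd t$ on $(0,\infty)$, where $\psi$ is smooth, in fact real-analytic in $\log t$. Lemma~\ref{lemma:h_positivity} gives $\chi(t)\dd t*\nu=h(t)\dd t$ with $h$ continuous and $h>0$ on $(0,\infty)$. Since $\nu$ is a positive measure, for every Borel set $E\subset(0,\infty)$ we get
\[
\int_E\psi(t)\dd t=\mu(E)\geq\int_E h(t)\dd t.
\]
Hence $\psi\geq h$ almost everywhere on $(0,\infty)$.

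Next we upgrade this to a pointwise statement. Both $\psi$ and $h$ are continuous on $(0,\infty)$, so $\psi-h$ is continuous. A continuous function that is nonnegative almost everywhere is nonnegative everywhere, since otherwise it would be negative on an open interval of positive measure. Therefore $\psi(t)\geq h(t)>0$ for every $t>0$. Any atom of $\nu$ in $(0,\infty)$ would contradict absolute continuity of $\mu$, but the inequality above does not need this.

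The statement for $\omega$ then follows from \eqref{eq:psi-omega}: $\omega(t)=t^{\delta}\psi(t)$ with $t^\delta>0$, so $\omega>0$ on $(0,\infty)$. Lemma~\ref{lem:periodic-transfer} shows $\omega$ is real-valued on $t>0$, since $\psi$ is the density of a positive measure, and smooth, hence continuous. So $\omega$ attains a minimum on the compact interval $[1,p^2]$, and that minimum is positive, which is \eqref{eq:omega-positive-min}. By \eqref{eq:omega-periodic} it is also a global lower bound.

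The only delicate point is matching the densities: $\mu(\dd t)=\psi(t)\dd t$ is stated on $(0,\infty)$, and a possible atom of $\mu$ at $0$ has to be handled. We restrict all comparisons to Borel sets in $(0,\infty)$, which makes the value of $\nu(\{0\})$ irrelevant. Continuity of $\psi$ comes from the normal convergence of \eqref{eq:def-psi-density} established in Proposition~\ref{prop:vague-limit}.
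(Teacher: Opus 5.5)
Your proposal is correct and follows the paper's proof essentially step for step. The steps are: write $\psi\,\dd t=\nu+h\,\dd t$ on $(0,\infty)$; deduce $\psi\ge h$ almost everywhere from positivity of $\nu$; upgrade to every point by continuity of $\psi$ and $h$; use $h>0$ from Lemma~\ref{lemma:h_positivity}, then $\omega=t^{\delta}\psi$, and finally continuity on the compact interval $[1,p^2]$ for the positive minimum.
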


\begin{proof}

By Lemma \ref{lemma:mu_as_convolution}, we have \[
\psi(t)\,\dd t
=
\nu+h(t)\,\dd t.
\]
Since $\nu$ is a positive measure, it follows that $\psi(t)\geq h(t)$
for almost every $t>0$.  As $\psi$ and $h$ are continuous, the
inequality holds everywhere; therefore, $\psi(t)\geq h(t)$ for all $t>0$. By Lemma \ref{lemma:h_positivity}, $h(t)>0$ on $(0,\infty)$. Hence $\psi$ is strictly positive on $(0,\infty)$. Finally, \eqref{eq:psi-omega} gives $\omega(t)=t^\delta\psi(t)>0$ for all $t>0$.

By Lemma~\ref{lem:periodic-transfer}, $\omega$ is continuous and
multiplicatively $p^2$-periodic.  Hence, it attains a minimum on the
compact interval $[1,p^2]$ with a positive value. 
\end{proof}

\begin{proof}[Proof of Theorem~\ref{thm:main}]
Proposition~\ref{prop:additive-asymptotic} gives,
\[
2^{-k}b_k
 =k^{-\delta}\omega(k)+O(k^{-\delta-1/2}).
\]
Lemma~\ref{lem:periodic-transfer} shows that $\omega$ is smooth in
$\log t$ and multiplicatively $p^2$-periodic.  Proposition~\ref{prop:omega-positive}
shows that it is strictly positive and has a positive minimum on one
multiplicative period.  Dividing the additive error by
$k^{-\delta}\omega(k)$ therefore gives
\[
2^{-k}b_k
 =k^{-\delta}\omega(k)\bigl(1+O(k^{-1/2})\bigr),
\]
and multiplication by $2^k$ gives the asserted relative asymptotic.

Proposition~\ref{prop:varphi-properties} proves that 
$\varphi_p$ is strictly positive and Schwartz.  Proposition~\ref{prop:vague-limit}
shows that the renormalized convolution densities
\[
\psi_{p,r}(t)=4\Lambda^{-r}\Psi_{p,[-r,r]}(2t)
\]
converge vaguely to
\[
\psi(t)=t^{-\delta}\omega(t).
\]
This is the convolution realization stated in the theorem.
\end{proof}

\printbibliography

\end{document}